\pgfplotsset{compat=1.15}
\newcommand{\off}[1]{}
\newtheorem{thm}{Theorem}[section]
\newtheorem{rem}[thm]{Remark}
\newtheorem{example}[thm]{Example}
\newtheorem{assumption}{Assumption}
\newtheorem{observation}{Observation}
\numberwithin{equation}{section}
\newcommand{\Z}{\mathbb{Z}}
\newcommand{\R}{\mathbb{R}}
\newcommand{\N}{\mathbb{N}}
\renewcommand{\H}{\mathcal{H}}
\newcommand{\norm}[1]{\left\Vert #1 \right\Vert}
\newcommand{\abs}[1]{\left\vert #1 \right\vert}
\newcommand{\argmin}{\mathrm{arg}\min}
\newcommand{\st}{\,:\,}
\newcommand{\dx}{\,\mathrm{d}x}
\renewcommand{\d}{\,\mathrm{d}}
\newcommand{\calN}{\mathcal{N}}
\newcommand{\eps}{\varepsilon}
\newcommand{\id}{\mathrm{id}}
\newcommand{\tv}{\mathrm{TV}}
\newcommand{\preim}{\mathrm{preim}}
\newcommand{\ran}{\mathrm{ran}}
\newcommand{\prox}{\operatorname{prox}}
\newcommand{\proj}{\operatorname{proj}}
\newcommand{\relu}{\mathrm{ReLU}}
\renewcommand{\div}{\operatorname{div}}
\definecolor{darkgreen}{rgb}{0.,.7,0.}
\definecolor{plum}{rgb}{0.56, 0.27, 0.52}
\newcommand{\revise}[1]{\textcolor{red}{#1}}
\renewcommand{\revise}[1]{#1}
\newcommand{\revision}[1]{{\color{red}#1}}
\renewcommand{\revision}[1]{#1}
\renewcommand{\thefootnote}{\fnsymbol{footnote}}
\begin{document}

\title{Nonlinear Power Method for Computing Eigenvectors \\of Proximal Operators and Neural Networks}

\author{Leon Bungert\footnotemark[1]\  \footnotemark[4]
\and Ester Hait-Fraenkel\footnotemark[2]\ \footnotemark[4] 
\and Nicolas Papadakis\footnotemark[3]
\and Guy Gilboa\footnotemark[2]}



\renewcommand{\thefootnote}{\fnsymbol{footnote}}
\footnotetext[1]{Department of Mathematics, University of Erlangen (\email{leon.bungert@fau.de})}
\footnotetext[2]{Department of Electrical Engineering, Technion - Israel Institute of Technology}
\footnotetext[3]{Univ. Bordeaux, IMB, Bordeaux INP, CNRS, UMR 5251 F-33400 TALENCE, France}
\footnotetext[4]{These authors contributed equally to this work.}
\maketitle

\begin{keywords}
  Nonlinear power method, Power iterations, Proximal operators, Neural networks
\end{keywords}

\begin{abstract}
    Neural networks have revolutionized the field of data science, yielding remarkable solutions in a data-driven manner. 
    For instance, in the field of mathematical imaging, they have surpassed traditional methods based on convex regularization.
    However, a fundamental theory
    supporting the practical applications is still in the early stages of development. 
    We take a fresh look at neural networks and examine them via nonlinear eigenvalue analysis.
    The field of nonlinear spectral theory is still emerging, providing insights about nonlinear operators and systems. 
    In this paper we view a neural network as a complex nonlinear operator and attempt to find its nonlinear eigenvectors. 
    We first discuss the existence of such eigenvectors and analyze the kernel of $\relu$ networks.
    Then we study a nonlinear power method for generic nonlinear operators. 
    For proximal operators associated to absolutely one-homogeneous convex regularization functionals, we can prove convergence of the method to an eigenvector of the proximal operator.
    This motivates us to apply a nonlinear method to networks which are trained to act similarly as a proximal operator.
    In order to take the non-homogeneity of neural networks into account we define a modified version of the power method.
    \\
    We perform extensive experiments \revise{for different proximal operators and} on various shallow and deep neural networks designed for image denoising.
    \revise{Proximal eigenvectors will be used for geometric analysis of graphs, as clustering or the computation of distance functions.}
    For simple neural nets, we observe the influence of training data on the eigenvectors.
    For state-of-the-art denoising networks, we show that eigenvectors can be interpreted as (un)stable modes of the network, when contaminated with noise or other degradations.
\end{abstract}

\section{Introduction}
\label{Sec::intro}
The emerging field of nonlinear spectral theory allows better understanding of nonlinear processes, as well as designing algorithms based on nonlinear spectral methods.
In this paper, we make a first step towards understanding neural networks via their (nonlinear) eigenvectors. 
In image processing and learning, there were several theoretical advances in analyzing nonlinear eigenproblems. \cite{Meyer[1],TVFlowInRN} formulated analytic solutions for eigenfunctions associated to the $1$-Laplacian. \cite{benning2013ground} analyzed properties of ground states of one-homogeneous functionals. A nonlinear spectral decomposition based on total-variation proposed in \cite{gilboa2013spectral,Gilboa_spectv_SIAM_2014} was later generalized to the one-homogeneous case in \cite{burger2016spectral}, with theory for the discrete case. 
Recently, \cite{bungert2019nonlinear,bungert2019computing} rigorously analyzed this framework in the infinite-dimensional setting \revise{and showed that decompositions into nonlinear eigenvectors can only be obtained under special geometrical conditions}. 
A $p$-Laplacian spectral framework is formulated in \cite{cohen2020introducing}. Applications to image denoising \cite{MoellerICCV2015}, segmentation \cite{ZeuneSegmentation2017}, fusion \cite{hait2019spectral} and classification \cite{aviles2019} were proposed.

A very difficult problem for general nonlinear operators is how to compute their eigenvectors.
In the context of learning, the authors of \cite{BressonSzlam2010Cheeger} estimated the Cheeger cut on graphs by a Rayleigh-type quotient. This was later generalized in \cite{hein2010inverse} to a nonlinear inverse-power method.
\cite{nossek2018flows,aujol2018theoretical,bungert2019asymptotic} suggested nonlinear flows to solve eigenproblems induced by total-variation and general one-homogeneous functionals. 
This was later generalized to solve eigenproblems emerging in nonlinear optics \cite{cohen2018energy}.
Algorithms to minimize generalized Rayleigh-quotients on grids and graphs were proposed and analyzed in \cite{feld2019rayleigh}. 
Furthermore, in \cite{effland2019optimal,kobler2020total} the authors computed nonlinear eigenfunctions associated to learned convolutional regularizing functionals, which generalize total variation.
\revise{%
Nonlinear power methods and Perron-Frobenius theory for order-preserving multihomogeneous maps were analyzed in \cite{gautier2019perron,gautier2019unifying,gautier2020computing} to generalize the classical results for the computation of matrix norms \cite{boyd1974power}.}
The methods above assume either that the operator is a subgradient of a convex functional, or at least an analytically known operator.
Significantly more complex nonlinear operators were only recently studied for the first time, for the case of nonlinear denoisers, as suggested by part of the authors in \cite{HAITFRAENKEL2021103041}. 

Neural networks have revolutionized the world of computer vision and image processing \cite{egmont2002image}, applied for many tasks, such as classification and segmentation (cf.~\cite{schmidhuber2015deep} for an overview), depth estimation \cite{liu2015deep,garg2016unsupervised}, tracking \cite{fan2010human,nam2016learning}, to name a few. 
An ongoing extensive research on mathematical frameworks aims to interpret neural nets. 
This includes earlier studies in the context of wavelets and a generalization of the scatter transform \cite{mallat2016}, the interpretation of residual networks as nonlinear ODEs \cite{haber2017stable,chen2018neural}, deep layer limits \cite{thorpe2018deep}, sparse coding \cite{papyan2018theoretical} and more. 
Recent studies, more closely related to our work, are the SVD analysis of a ReLU layer \cite{dittmer2019singular} and convergence of plug-and-play ADMM algorithms using denoising networks with certain Lipschitz regularity \cite{ryu2019plug}.

The aim of this paper is twofold: firstly, we rigorously analyze a nonlinear power method of proximal operators, whose eigenvalue problem is in fact equivalent to the eigenvalue problems associated to Rayleigh quotients.
Secondly, we generalize this power method to compute eigenvectors of general neural networks, which do not possess the homogeneity properties of proximal operators. Similarly to many of the works above, we here focus on networks with inputs and outputs of the same dimension. 
That is, given an input image, the output of the network is an image of the same size, which is common in many image-processing tasks.
More specifically, although our algorithms are more general, we direct our efforts towards denoising networks (e.g., \cite{zhang2017beyond,zhang2018ffdnet}).
Given a noisy image, such nets estimate a suitable clean image. 
In this setting a nonlinear eigenvalue problem can be well defined, along with some general regularity assumptions on the behavior of the net.

\revision{%
While proximal operators and denoising neural networks obviously are mathematically very different objects, they both represent modern approaches for denoising images.
Proximal operators are model-based and are designed in order to minimize a certain energy functional.
In contrast, neural networks are parametric nonlinear mappings, where the parameters are optimized using huge amounts of data.
Hence, their behavior strongly depends on the kind of training data used.
An important similarity between proximal operators and neural networks for image denoising is that they map their input space into itself and reduce the complexity of their inputs. 
These two aspects would not be satisfied, for instance, for classification networks, which map into a small label space, or deblurring networks, which sharpen images and can be expected to increase their complexity.
Furthermore, as shown in \cite{meinhardt2017learning}, denoising neural networks can replace proximal operators in optimization algorithms for solving inverse problems. Recent works \cite{xu2020provable} also propose to design denoising networks that approximate the minimum mean squared error denoiser, that is proven to be the proximal operator of some (possibly nonconvex) function \cite{gribonval2011should}.  
These strong relations motivate us to analyze power iterations of proximal operators thoroughly and generalize them to neural networks, where a rigorous analysis is currently out of reach because of their highly complex nonlinearity.
Our computational results show that the proposed modification of the power method for neural networks behaves very similarly to linear or proximal power iterations.
}

\revise{Our motivation for analyzing eigenvectors of black-box operators or neural nets is the following: first, eigenvalue analysis provides us with eigenvectors corresponding to large and small eigenvalues. These eigenvectors constitute the most and least suitable inputs for the net, respectively~\cite{HAITFRAENKEL2021103041}. In the case of denoisers, on which we focus here, those eigenvectors can be denoised best or worst. This serves as a tool to learn about the nature and behavior of the underlying operator and reveal otherwise hidden or hardly visible behavior.
Second, eigenvalue analysis can serve as a tool for spectral decomposition (see for instance the work mentioned earlier regarding spectral decomposition, as well as~\cite{katzir,gilboa2018nonlinear}). In the latter, it was discussed how to generate a scale space of a coarsening operator by repeatedly applying it. This scale space is represented through layers, decaying at different rates under application of the operator. Finding eigenfunctions can be the first stage to solving and understanding this representation problem.}

Our main contributions are:
\begin{enumerate}
    \item We propose a generalization of the linear power method, a classical iterative method to compute eigenvectors of matrices.
    \item We provide a rigorous analysis of the method for a certain class of nonlinear proximal operators, showing its validity. 
    \item We discuss existence of eigenvectors of neural nets and characterize the kernel of ReLU nets as convex polyhedra. 
    \item By computing eigenvectors of state-of-the-art denoising networks we gain insights on the most and least stable structures of the net.
\end{enumerate}

\section{Setting and Definitions}\label{Sec::def}
Let $T:\H\to\H$ be a generic (nonlinear) operator on a real Hilbert space $\H$ with norm $\norm{\cdot}$.
In the case of a neural network one typically has $\H=\R^n$, equipped with the Euclidean norm.
We aim at solving the \emph{nonlinear} eigenproblem   
\begin{equation}
\label{eq:EV}
    T(u)=\lambda u,
\end{equation}
where $u\in\H$ and $\lambda\in\R$ denote the eigenvector and eigenvalue, respectively.

\paragraph{Rayleigh quotient} A common notion in linear eigenvalue analysis is the Rayleigh quotient~\cite{horn2012matrix}, defined for symmetric matrices $L$ as:
\begin{equation}
    \label{eq:Rayleigh_lin}
    R_{\mathrm{lin}}(u):=\frac{u^T L u}{u^T  u}= \frac{\langle u,Lu\rangle}{\|u\|^2}.
\end{equation}
The Euler-Lagrange equation of \cref{eq:Rayleigh_lin} results in the eigenproblem. Therefore, an eigenvector is a critical point of the Rayleigh quotient. We can also understand the Rayleigh quotient as a generalized or approximated eigenvalue for any $u$ (not just eigenvectors). For eigenvectors (admitting $Lu=\lambda u$), we get exactly the corresponding eigenvalue
$R_\mathrm{lin}(u)=\lambda$.
In the nonlinear setting one can define a \emph{generalized Rayleigh quotient,}
\begin{equation}
\label{eq:Rayleigh}
    R(u)= \frac{\langle u,T(u)\rangle}{\|u\|^2}.
\end{equation}
When $T(u)$ is a subgradient of an absolutely one-homogeneous functional $J$, meaning $T(u)\in \partial J(u)$, we have $J(u)=\langle u, T(u)\rangle$. 
In this case \cref{eq:Rayleigh} becomes (see~\cite{nossek2018flows}) $R(u)= {J(u)}/{\norm{u}^2}$ and the eigenvalue problem takes the form $\lambda u \in \partial J(u)$.
In \cite{hein2010inverse} similar quotients were used to obtain an inverse-power method. 
In \cite{feld2019rayleigh,bungert2019asymptotic} the minimization of such quotients based on one- and $p$-homogeneous functionals were analyzed and used to solve eigenvalue problems.

\paragraph{Approximate eigenvectors and angle} Another important concept related to numerical solutions of eigenvectors is their approximation. In nonlinear systems one may often reach only an approximation of an eigenvector. We would like to quantify how close a given vector is to a precise eigenvector. One general formulation for any operator $T$, is by the angle (see~\cite{nossek2018flows}).
For eigenvectors, vectors $u$ and $T(u)$ are collinear. Thus their respective angle is either~$0$ (for positive eigenvalues) or~$\pi$ (for negative eigenvalues). Since both $u$ and $T(u)$ are real, eigenvalues are also real. Thus, the angle is a simple scalar measure that quantifies how close $u$ and $T(u)$ are to collinearity. We define the angle $\theta$ between $u$ and $T(u)$ by
\begin{equation}
\label{eq:Theta}
    \cos (\theta) = \frac{\langle u,T(u)\rangle}{\|u\|\| T(u)\|}.
\end{equation}
We discuss denoisers with positive eigenvalues, thus we aim to reach an angle close to 0.

\paragraph{Linear Power Method}
We briefly recall the linear power method algorithm (also known as Von-Mises-iteration~\cite{mises1929praktische}), which we generalize in this work.
Given a matrix $L\in \mathbb{R}^{n \times n}$ which is diagonalizable, the linear eigenproblem is 
\begin{equation}\label{eq:EV_lin}
    Lu=\lambda u.
\end{equation}
A solution to this problem is found by the iterative algorithm given in \cref{alg:lin}. 
Under some conditions the algorithm converges to the eigenvalue $\lambda$ with the greatest absolute value, and to its corresponding eigenvector. 

\begin{algorithm}
\caption{\emph{Linear} Power Method with Matrix $L$}
\label{alg:lin}
~~ \\
\textbf{Input:} $f\in \R^n$, $\varepsilon>0$.
\begin{enumerate}
\item Initialize: $u^0 \gets f/\|f\|$, $\,\,k \gets 1.$
    \item Repeat until $\|u^{k+1}-u^k\|< \varepsilon$:\\ 
    $u^{k+1} \gets \frac{Lu^k}{\|Lu^k\|}$, $\,\,\,k \gets k+1.$
\end{enumerate}
\textbf{Output:} $(u^*, \lambda^*)$, where $u^*=u^k$, $\lambda^*=R(u^*)$, with $R$ defined in \cref{eq:Rayleigh_lin}. 
\end{algorithm}

\paragraph{Proximal Power Method}
\revision{%
Note that applying a power method to differential operators $T(u)=\partial J(u)$ is not particularly relevant since even in the linear case $T(u)=Lu$ they are unbounded in which case power methods diverge.
Therefore, a canonical choice of nonlinear operator is the proximal operator
\begin{align}
    T(u):=\prox_\alpha^J(u):=\argmin_{u\in\H}\frac{1}{2}\norm{v-u}^2+\alpha J(u),
\end{align}
which has the formal expression $T(u)=(\id+\alpha\partial J)^{-1}(u)$. 
On a formal level this already indicates that, even if the operator $\partial J$ is unbounded, the proximal operator is well behaved with eigenvalues between zero and one.
Furthermore, it shows that a proximal power method is an inverse power method of the operator $u\mapsto (\id+\alpha\partial J)(u)$ which has the same eigenvectors as $\partial J$.
In \Cref{Sec::proximal_operators_analysis} we will see that proximal power iterations indeed converge and can be thoroughly analyzed.
}

\section{Networks as Nonlinear Operators - Existence of Eigenvectors and Kernel}
In this section, we first aim at proving existence of eigenvectors for a generic class of nonlinear operators, including most neural nets.
Using Banach's fixed point theorem we will see that Lipschitz continuity suffices to prove existence of eigenvectors.
Secondly, we study a special class of eigenvectors, namely vectors in the kernel of the operator which fulfill \cref{eq:EV} with $\lambda=0$.
These eigenvectors are of special interest since they characterize the most unstable inputs to the neural net.
For example, if the net is supposed to denoise images then the kernel can be interpreted as pure noise images.
While for linear operators and also for subdifferential operators of homogeneous functionals the kernel can be shown to be a linear space \cite{bungert2019asymptotic,bungert2019nonlinear}, this is not true for general nonlinear operators.
In order to develop a first understanding of the kernel of neural nets we characterize the kernel of $\relu$ networks in the single- and multi-layer case.  

\subsection{Existence of eigenvectors}
In order to prove existence of eigenvectors we consider $T$ as an operator $T:U\to U$, where $U\subset\H$ is some closed subset of $\H$ which meets
\begin{align}\label{eq:scaling_U}
    cU\subset U,\quad\forall 0\leq c\leq 1.
\end{align}
Relevant examples for neural nets are the non-negative orthant $U=\R^n_+$, the unit cube $U=[0,1]^n$, \revision{the unit simplex $U=\{u\in\R^n\st \sum_{i=1}^n u_i \leq 1,\,u_i\geq 0\,\forall i\}$}, or the whole space $U=\R^n$. 
The infinite-dimensional counterparts of these examples are $L^2$-functions which take only non-negative values or values in $[0,1]$.
Our only assumption on the operator $T$ is that it is Lipschitz continuous with some constant $L>0$, meaning
\begin{align}\label{ineq:lipschitz}
    \norm{T(u)-T(v)}\leq L\norm{u-v},\quad u,v\in U.
\end{align}

\begin{prop}
Under the assumptions \cref{eq:scaling_U} and \cref{ineq:lipschitz}, operator $T$ has an eigenvector, i.e., there exists $u^*\in U$ and $\lambda>0$ such that $T(u^*)=\lambda u^*$.
\end{prop}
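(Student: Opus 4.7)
The plan is to apply Banach's fixed point theorem to a suitably rescaled version of $T$. The rescaling should be chosen so that two things happen simultaneously: (i) the rescaled operator still maps $U$ into itself, which will be guaranteed via the star-shapedness property \cref{eq:scaling_U}; and (ii) it becomes a strict contraction, which will follow from the Lipschitz bound \cref{ineq:lipschitz}.

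Concretely, I would define $S: U \to U$ by $S(u) := \frac{1}{L+1} T(u)$. Since $T(u) \in U$ by hypothesis and the scalar $\frac{1}{L+1}$ lies in $[0,1]$, property \cref{eq:scaling_U} gives $S(u) \in U$, so $S$ is well defined as a self-map of $U$. The Lipschitz estimate then yields
\begin{equation*}
    \norm{S(u)-S(v)} = \tfrac{1}{L+1}\norm{T(u)-T(v)} \leq \tfrac{L}{L+1}\norm{u-v},\quad u,v\in U,
\end{equation*}
with contraction constant $\tfrac{L}{L+1}<1$. As a closed subset of the Hilbert space $\H$, the set $U$ is complete, and it is nonempty since $0\in U$ follows from \cref{eq:scaling_U} with $c=0$. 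Banach's fixed point theorem therefore produces $u^*\in U$ with $S(u^*)=u^*$, which is exactly $T(u^*)=(L+1)u^*$. Setting $\lambda:=L+1>0$ finishes the argument.

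I do not expect any serious obstacle here; the only real design choice is the rescaling factor, which must lie in $[0,1]$ (to use \cref{eq:scaling_U}) and at the same time make the scaled Lipschitz constant strictly less than one. The choice $c=\tfrac{1}{L+1}$ accomplishes both uniformly in $L\geq 0$, so no restriction on the Lipschitz constant is needed. One caveat worth flagging is that the proposition as stated does not exclude $u^*=0$: if $T(0)=0$ then $0$ is the unique fixed point of $S$ and the statement holds only in a trivial sense, whereas if $T(0)\neq 0$ then the Banach fixed point is automatically nonzero and we obtain a genuine nontrivial eigenvector. If one wanted to guarantee a nontrivial eigenvector in general, additional structural assumptions (e.g.\ a Brouwer/Schauder-type argument on a slice $\{u\in U\st \norm{u}=1\}$) would be required, but this goes beyond what the statement demands.
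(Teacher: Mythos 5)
Your proposal is correct and is essentially the paper's own argument: both rescale $T$ by a constant in $(0,1]$ so that star-shapedness \cref{eq:scaling_U} keeps the rescaled map a self-map of $U$ while \cref{ineq:lipschitz} makes it a contraction, and then invoke Banach's fixed point theorem (the paper splits into the cases $L<1$ and $L\geq 1$, using the factor $1/(L+\eps)$ in the latter, whereas you use the uniform factor $1/(L+1)$ --- a cosmetic difference). Your caveat about the possibly trivial fixed point when $T(0)=0$ is exactly the content of the remark the paper places immediately after the proof.
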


\begin{proof}
The proof uses Banach's fixed point theorem. If $T$ is Lipschitz continuous with constant $L<1$, then $T$ is a contraction and hence has a unique fix point $u^*\in U$ satisfying $T(u^*)=u^*$.

If the Lipschitz constant $L$ of $T$ is greater or equal than $1$, we define a new map 
$T_\eps:=T/(L+\eps)$ for $\eps>0$, which is a contraction. Furthermore, $T_\eps$ maps $U$ into $U$ by the assumption that $cU\subset U$ for $0<c\leq 1$. Hence, reasoning as above, there exists a unique $u^*\in U$ such that $T_\eps(u^*)=u^*$, which can be rewritten as $T(u^*)=(L+\eps)u^*$.
\end{proof}

\begin{rem}
The main drawback of the  above result is that one cannot ensure that the eigenvector, the existence of which is proved, is different from zero.
If it is known that $T(0)=0$, \revision{for instance, if $T$ is a linear map,} then $0$ is the unique fixed point of $T_\eps$ \revision{whereas non-trivial eigenvectors can exist also in this case.}
\end{rem}

\revision{%
\begin{rem}[Existence of normalized eigenvectors]
As in the linear setting, the existence of normalized eigenvectors is not ensured.
As counterexample, one can simply let $T$ coincide with a rotation which possesses no real non-zero eigenvector.
For a less trivial example, one can define the nonlinear operator $T:\R^2\to\R^2$ as
$$T(u) = \abs{1-\norm{u}}(-u_2,u_1) + \norm{u}(u_1,u_2).$$
The eigenvectors of this operator are given by $\{u\in\R^2\st\norm{u}=1\}$.

If the operator $T$ is positively homogeneous of some degree $p$, i.e., $T(cu)=c^p\,T(u)$ for $c>0$, and a non-zero eigenvector exists, it can obviously be normalized.
\end{rem}}

Hence, in order to show that a given neural net $T$ has an eigenvector, we simply have to make sure $T$ is Lipschitz continuous.
This, however, is fulfilled for most networks types.
\begin{example}[Deep neural nets]
Deep neural nets of the form
\begin{align}\label{eq:deep_net}
    T(u)=\sigma(A^{(n)}\dots\sigma(A^{(2)}\sigma(A^{(1)}u+b^{(1)})+b^{(2)})\dots+b^{(n)}),\quad u\in\R^n,
\end{align}
with weight matrices $A^{(k)}\in\R^{n\times n}$ and bias vectors $b^{(k)}\in\R^n$ for $k=1,\dots,n$ are Lipschitz continuous if the activation function $\sigma$ is Lipschitz continuous.
For most popular choices of $\sigma$ (such as $\relu$, $\mathrm{TanH}$, $\mathrm{Logistic}$, $\mathrm{SoftPlus}$, etc.) this is fulfilled.
\end{example}

\subsection{The Kernel of ReLU Networks}
We now study the kernel of $T$, which is given by
\begin{align}\label{eq:kernel}
    \mathrm{ker}(T):=\{u\in\H \st T(u)=0\}.
\end{align}
In the following we study the case where $T$ is a single-layer neural network with $\relu$ activation and sketch how to extend this for multi-layer networks.
The fundamental difference between these two cases is that, in general, the kernel is a convex cone for single-layer networks and, more generally, a convex polyhedron in the multi-layer case. 
\subsubsection{Single-layer case}
We consider a single-layer network of the form 
\begin{align}
    T(u)=\sigma(Au+b),\quad u\in\R^n,
\end{align}
where $A\in\R^{n\times n}$ is a square weight matrix, $b\in\R^n$ denotes a bias vector, and $\sigma$ is some activation function with $\sigma(x)=0$ for $x\leq 0$, the prototypical example being
\begin{align}
    \sigma(x)=\relu(x)=\max(x,0)
\end{align}
or any smoothed version of it. 
Hence, the kernel of $T$ can be written as
\begin{align}\label{eq:RELU_kernel}
    \mathrm{ker}(T)=\{u\in\R^n\st Au+b\leq 0\},
\end{align}
where the inequality should be understood component-wise.
We will make one assumption on the weights and biases which allows us to characterize the kernel explicitly. 
Without this assumption weaker versions of our results remain true.
\begin{assumption}[Range condition]\label{ass:range_A}
We assume that $b\in\ran(A)$.
\end{assumption}
For the following statements, we need the notion of a convex cone with tip.
\begin{definition}
A set $C\subset\R^n$ is called convex cone with tip $v_0\in\R^n$ if $u+\alpha(u-v_0)\in C$ for all $u\in C$ and $\alpha\geq 0$. $C$ is called polyhedral if it can be written as $C=\{u+v_0\in\R^n\st Mu\geq 0\}$ with some matrix $M\in\R^{n\times n}$.
\end{definition}
The following lemma states that preimages under affine maps preserve convex cones with tips and polyhedrality.
\begin{lemma}
Let $C\subset\R^n$ be a convex cone with tip $v_0\in\R^n$ and $F:\R^n\to\R^n,\; u\mapsto Au+b$, be an affine map. If there is $u_0\in\R^n$ meeting $F(u_0)=v_0$, then $D:=\preim_F(C)$ is a convex cone with tip $u_0$. Furthermore, if $C=\{u+v_0\in\R^n\st Mu\geq 0\}$ is polyhedral, so is $D$ and it holds $D=\{u+u_0\in\R^n\st MAu\geq 0\}$.
\end{lemma}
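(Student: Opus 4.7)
The plan is to handle the two assertions separately, exploiting the fact that $F$ is affine and that the tip $v_0$ lies in the image of $F$.

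For the first assertion, I would take an arbitrary $u\in D=\preim_F(C)$ and $\alpha\geq 0$, and directly verify that $u+\alpha(u-u_0)\in D$ by computing the image under $F$:
\begin{align*}
F(u+\alpha(u-u_0)) &= A\bigl(u+\alpha(u-u_0)\bigr)+b \\
&= (Au+b) + \alpha\bigl(Au-Au_0\bigr) \\
&= F(u) + \alpha\bigl(F(u)-F(u_0)\bigr) = F(u) + \alpha\bigl(F(u)-v_0\bigr).
\end{align*}
Since $F(u)\in C$ by assumption and $C$ is a convex cone with tip $v_0$, the right-hand side lies in $C$, so the image is in $C$ and hence $u+\alpha(u-u_0)\in D$. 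This shows $D$ is a convex cone with tip $u_0$.

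For the polyhedral case, I would translate the defining inequality of $C$ back through $F$. A point $v\in\R^n$ lies in $C=\{u+v_0\st Mu\geq 0\}$ if and only if $M(v-v_0)\geq 0$. Therefore $u\in D$ if and only if $M(F(u)-v_0)\geq 0$. Using $F(u_0)=v_0$, i.e. $b-v_0=-Au_0$, we rewrite
\begin{equation*}
M(F(u)-v_0) = M(Au+b-v_0) = MA(u-u_0),
\end{equation*}
so that $D=\{u\in\R^n\st MA(u-u_0)\geq 0\}=\{u+u_0\in\R^n\st MAu\geq 0\}$, which is the claimed polyhedral form.

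I do not anticipate a real obstacle here; the whole argument is a one-line reduction once one writes $F$ in the form $F(u)-v_0=A(u-u_0)$, which is the translation identity one gets for free from $F(u_0)=v_0$. The only point worth double-checking is the sign/direction of the substitution $u\mapsto u-u_0$ in the final rewriting of $D$, to match the convention used in the definition of polyhedral cone with tip.
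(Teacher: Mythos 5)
Your proof is correct and follows essentially the same route as the paper's: the first part is verified by the identical computation $F(u+\alpha(u-u_0))=F(u)+\alpha(F(u)-v_0)$, and the second part rests on the same translation identity $F(u)-v_0=A(u-u_0)$, which the paper carries out via a chain of set equalities rather than an explicit inequality rewriting. No gaps.
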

\begin{proof}
We take an element $u\in D=\preim_F(C)$, meaning that $F(u)\in C$. We have to show that $u_\alpha:=u+\alpha(u-u_0)\in D$ for any $\alpha\geq 0$. To this end we compute
\begin{align*}
    F(u+\alpha(u-u_0))&=A(u+\alpha(u-u_0))+b=Au+\alpha Au-\alpha Au_0+b\\
    &=Au+b+\alpha(Au+b-(Au_0+b))=F(u)+\alpha(F(u)-F(u_0))\\
    &=F(u)+\alpha(F(u)-v_0)\in C,
\end{align*}
which follows since $F(u)\in C$ and $C$ is a cone with tip $v_0$. Hence, we have established $u_\alpha\in D$. 
For the second statement we assume that $C$ is polyhedral and obtain
\begin{align*}
    D&=\preim_F(C)=\{u\in\R^n\st F(u)\in C\}=\{u\in\R^n\st F(u)=v+v_0,\, Mv\geq 0\}\\
    &=\{u+u_0\in\R^n\st Au+F(u_0)=v+v_0,\, Mv\geq 0\}\\
    &=\{u+u_0\in\R^n\st MAu\geq 0\},
\end{align*}
where we again used $F(u_0)=v_0$. This shows that $D$ is polyhedral and concludes the proof. 
\end{proof}
Applying these insights to the kernel \cref{eq:RELU_kernel} of the $\relu$ network $T$, one obtains
\begin{thm}[Kernel of a single-layer $\relu$ network]\label{thm:kernel_of_single_layer}
Under \cref{ass:range_A} the kernel of $T$ is a polyhedral convex cone with tip $u_0$ where $Au_0=-b$. Furthermore, it holds
\begin{align}
    \ker(T)=\{u+u_0\in\R^n\st -Au\geq 0\}.
\end{align}
\end{thm}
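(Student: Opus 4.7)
The plan is to apply the preceding lemma directly, with the non-positive orthant playing the role of $C$. First I would recognize that the kernel of $T$, as written in \cref{eq:RELU_kernel}, is precisely the preimage of $C:=\{v\in\R^n\st v\leq 0\}$ under the affine map $F(u)=Au+b$. The set $C$ is itself a polyhedral convex cone with tip $v_0=0$, since for any $v\in C$ and $\alpha\geq 0$ we have $v+\alpha(v-0)=(1+\alpha)v\leq 0$, and $C$ admits the representation $C=\{v+v_0\in\R^n\st Mv\geq 0\}$ with $M=-I$.

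Next I would use \cref{ass:range_A} to invoke the lemma. Since $b\in\ran(A)$, there exists $u_0\in\R^n$ with $Au_0=-b$, i.e., $F(u_0)=0=v_0$, so the hypothesis of the lemma is satisfied. The lemma then yields that $\ker(T)=\preim_F(C)$ is a polyhedral convex cone with tip $u_0$, and the explicit formula in the second part of the lemma gives
\begin{equation*}
    \ker(T)=\{u+u_0\in\R^n\st MAu\geq 0\}=\{u+u_0\in\R^n\st -Au\geq 0\},
\end{equation*}
which is exactly the claimed representation.

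There is no real obstacle here beyond bookkeeping: the entire content of the theorem is packaged in the lemma, and the only task is to identify the correct target set $C$ and verify the range hypothesis. The only minor subtlety worth pointing out is that \cref{ass:range_A} is essential for the lemma to apply; without it, $u_0$ need not exist, and although one could still describe $\ker(T)$ as an intersection of half-spaces, it would not in general be a cone with a tip that arises from the affine geometry of $F$. I would also briefly remark that $u_0$ is only unique up to the kernel of $A$, but this ambiguity does not affect the description of $\ker(T)$ as a translate of the polyhedral cone $\{u\st -Au\geq 0\}$.
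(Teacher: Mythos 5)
Your proposal is correct and follows exactly the paper's own argument: apply the preceding lemma to the affine map $F(u)=Au+b$ and the polyhedral cone $C=\{v\in\R^n\st v\leq 0\}$ with tip $v_0=0$ and $M=-\mathbb{I}$, using \cref{ass:range_A} to produce $u_0$ with $F(u_0)=0$. The added remarks on the necessity of the range condition and the non-uniqueness of $u_0$ are accurate but not needed.
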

\begin{proof}
For the proof one applies the statements above to the affine map $F(u)=Au+b$ and the polyhedral convex cone $C=\{u\in\R^n\st u\leq 0\}$ with tip $v_0=0$.
This cone can be written as $C=\{u\in\R^n\st Mu\geq 0\}$ where $M:=-\mathbb{I}$ denotes the negative identity matrix.
\end{proof}

\begin{rem}[Other activation functions]
If one considers activation functions which fulfill $\sigma(x)=0$ if and only if $x=0$, the discussion of the kernel becomes trivial. Either the equation $Au=-b$ has at least one solution, in which case the kernel is a single point or an affine space, or it does not, in which case the kernel is empty.
\end{rem}

\subsubsection{Multi-layer case}
Now we sketch how the kernel of a deep network can be obtained. For simplicity, we restrict ourselves to the case of two layers and consider
\begin{align}
    T(u)=\sigma(F^{(1)}(\sigma(F^{(2)}(u)))),\quad u\in\R^n,
\end{align}
where $F^{(k)}(u)=A^{(k)}u+b^{(k)}$ for $k=1,2$, denote the corresponding affine functions. 
We assume that there is an element $u_0$ which meets $A^{(2)}u_0=-b^{(2)}$ and obtain
\begin{align*}
    C^{(2)}:=\preim_{F^{(2)}}(\R^n_-)=\{u+u_0\in\R^n\st -A^{(2)}u\geq 0\}
\end{align*}
from \cref{thm:kernel_of_single_layer}. 
This implies
\begin{align*}
    \ker(T)&=\{u\in\R^n\st F^{(2)}(\sigma(F^{(1)}(u)))\leq 0\}=\{u\in\R^n\st \sigma(F^{(1)}(u))\in C^{(2)}\}.
\end{align*}
At this point one cannot simply take the preimage of $C^{(2)}$ under $F^{(1)}$ to obtain the kernel of $T$, since the activation function $\sigma$ is in the way. 
However, for $\relu$-type activation functions one can simplify this to 
\begin{align}
    \ker(T)=\preim_{F^{(1)}}\left(C_+^{(2)}\right)
\end{align}
where $C_+:=\{\max(u,0)\st u\in C\}$ denotes the positive part of a set $C\subset\R^n$.
Note than one can write $C_+^{(2)}$ as intersection of two polyhedral cones $C_+^{(2)}=C^{(2)}\cap\R^n_+$, which is a polyhedron, i.e., an intersection of finitely many half-spaces.
Hence, the kernel is given by the preimage of the polyhedron $C_+^{(2)}$ under the affine map $F^{(1)}$ which is again a polyhedron, according to \cite{zhang2012polyhedron}.
We condense these insights into 
\begin{thm}[Kernel of a multi-layer $\relu$ network]
Let $T:\R^n\to\R^n$ be a multi-layer neural net with $\relu$ activation, given by \cref{eq:deep_net}.
Then $\ker(T)$ is a (possibly unbounded) convex polyhedron.
\end{thm}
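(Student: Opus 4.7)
My plan is to proceed by induction on the number of layers $n$, following closely the two-layer argument sketched above. The base case $n=1$ is already covered by \cref{thm:kernel_of_single_layer}, since a polyhedral convex cone is in particular a convex polyhedron. For the inductive step I would assume the claim for all networks with at most $n-1$ layers and consider an $n$-layer network $T$ of the form \cref{eq:deep_net}.

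To reduce the problem, I would peel off the outermost layer exactly as in the two-layer sketch. Writing $T = \sigma \circ F^{(n)} \circ \tilde T$, where $\tilde T$ is the composition of the first $n-1$ layers and $F^{(n)}(v) = A^{(n)}v + b^{(n)}$, one obtains
\begin{align*}
\ker(T) = \{u \st F^{(n)}(\tilde T(u)) \leq 0\} = \tilde T^{-1}\bigl(C^{(n)}\bigr),
\end{align*}
where $C^{(n)} := \{v \in \R^n \st A^{(n)}v + b^{(n)} \leq 0\}$ is a convex polyhedron as an intersection of finitely many half-spaces. The inductive hypothesis as stated does not apply directly to this level set, since it only controls the zero set of $\tilde T$. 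The natural remedy is to strengthen the inductive claim to: for every convex polyhedron $P \subset \R^n$, the set $\tilde T^{-1}(P)$ is a convex polyhedron. This stronger statement is proved by the same peeling idea: replace $P$ by its positive part $P_+ = P \cap \R^n_+$ (exploiting that the outer $\sigma$ of $\tilde T$ forces its output to lie in $\R^n_+$), take the affine preimage under the next affine map $F^{(n-1)}$ to get a new convex polyhedron by the standard closure property \cite{zhang2012polyhedron}, and iterate inward through the remaining $n-1$ layers. After $n-1$ peeling steps the kernel is exhibited as an iterated affine preimage of a convex polyhedron, hence itself a convex polyhedron.

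The main obstacle is the interaction between the ReLU nonlinearity and preimage-taking: in general the preimage of a convex polyhedron under the componentwise map $\sigma$ is only a finite union of convex polyhedra, one for each sign pattern of the argument, and need not be convex. The two-layer sketch finesses this through the positive-part construction $C_+$ and the identification $C_+ = C \cap \R^n_+$, which leverages the fact that every successive affine pre-activation in the network is compared against a set that intersects $\R^n_+$ in exactly the right way. Verifying that this identification, or an appropriate substitute preserving a single convex polyhedron at each peeling step, holds throughout the induction is the delicate point; once that is in hand, the remaining closure properties (intersection with half-spaces, affine preimage) are routine and the induction closes.
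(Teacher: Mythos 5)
You have put your finger on exactly the right difficulty: everything hinges on whether the preimage of a convex polyhedron under the componentwise map $\sigma=\max(\cdot\,,0)$ can be kept a \emph{single} convex polyhedron at each peeling step. The paper's sketch finesses this with the identities $\ker(T)=\preim_{F^{(1)}}(C_+^{(2)})$ and $C_+=C\cap\R^n_+$, and you correctly flag that these need verification before the induction can close. The trouble is that they cannot be verified, because they are false, and so is the theorem as stated. The correct relation is $\ker(T)=\preim_{F^{(1)}}\bigl(\sigma^{-1}(C^{(2)})\bigr)$, and $\sigma^{-1}(C^{(2)})$ is strictly larger than $C_+^{(2)}$ whenever $C^{(2)}$ meets the boundary of the positive orthant: any $w$ with some strictly negative coordinates such that $\max(w,0)\in C^{(2)}$ lies in $\sigma^{-1}(C^{(2)})$ but not in $C_+^{(2)}$, since every element of $C_+^{(2)}$ is nonnegative. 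These extra points are precisely what destroys convexity, so no substitute invariant of the form ``a single convex polyhedron per step'' can survive the induction.

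A concrete two-layer counterexample satisfying all stated and implicit hypotheses (including $b^{(k)}\in\ran(A^{(k)})$): take $n=2$, $A^{(1)}=\mathbb{I}$, $b^{(1)}=b^{(2)}=0$, and $A^{(2)}=\left(\begin{smallmatrix}1 & -1\\ -1 & 1\end{smallmatrix}\right)$, so that $T(u)=\sigma(A^{(2)}\sigma(u))$ and
\begin{align*}
\ker(T)=\{u\in\R^2\st \max(u_1,0)=\max(u_2,0)\}=\R^2_-\cup\{(t,t)\st t\geq 0\}.
\end{align*}
This set contains $(3,3)$ and $(-1,-5)$ but not their midpoint $(1,-1)$, hence it is not convex and in particular not a polyhedron. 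What your peeling argument (and the paper's) actually delivers is weaker but correct: $\sigma^{-1}(P)$ is a finite union of convex polyhedra, one for each sign pattern (equivalently, one for each face of $P\cap\R^n_+$), and affine preimages and intersections preserve finite unions of polyhedra. If you weaken your strengthened inductive hypothesis to ``the preimage of every convex polyhedron under the sub-network is a \emph{finite union} of convex polyhedra,'' your induction closes verbatim and yields the statement that is actually true: $\ker(T)$ is a finite union of (possibly unbounded) convex polyhedra, with at most one polyhedron per activation pattern of the hidden units.
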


\begin{rem}[Deep networks with multiple layers]
Note that for deep networks the kernel is found by taking successive preimages of non-negative polyhedra under affine maps and keeping only the positive parts. 
Hence, the kernel will in general be smaller the larger the numbers of layers is.
In particular, if one of the preimages does not intersect the positive orthant, the kernel will be empty.
\end{rem}

\section{Analysis of the Nonlinear Power Method}
As a first step towards computing eigenvectors of neural nets, we study \cref{alg:simple} below, which is a straightforward generalization of \cref{alg:lin} to the nonlinear case, first studied in \cite{HAITFRAENKEL2021103041}. 
We first repeat some key results from~\cite{HAITFRAENKEL2021103041} in \Cref{Sec::simple_power_it}.
Then, in \Cref{Sec::proximal_operators_analysis} we analyze the algorithm for a specific family of nonlinear denoisers, which are proximal operators based on convex, absolutely one-homogeneous regularizers (such as total variation \cite{burger2013guide} and total generalized variation \cite{bredies2010total}).
We will prove that the proximal power method converges to an eigenvector under natural assumptions \revise{and give a convergence rate for the angle (cf.~\cref{eq:Theta}).}
Note that these homogeneous regularizers are not sensitive to the range of the vectors.
In contrast, in \Cref{Sec::toy_example} we will present a toy example of a non-homogeneous single-layer $\relu$ net, showing the limitations of the standard power method and motivating the generalized method defined in \Cref{Sec::adapted}.

\subsection{A Simple Nonlinear Power Method}
\label{Sec::simple_power_it}
We define the following nonlinear power-iteration algorithm, to which we refer as a \emph{simple} algorithm. This is an immediate generalization of \cref{alg:lin}, replacing the linear matrix $L$ by a nonlinear operator $T(\cdot)$.  

\begin{algorithm}
\caption{{\it Simple} Power Method with Generic Operator $T$}
\label{alg:simple}
~~ \\
\textbf{Input:} $f\in \H$, $\varepsilon>0$.
\begin{enumerate}
\item Initialize: $u^0 \gets f/\|f\|$, $\,\,k \gets 1.$
    \item Repeat until $\|u^{k+1}-u^k\|< \varepsilon$:\\ 
    $u^{k+1} \gets \frac{T(u^k)}{\|T(u^k)\|}$, $\,\,\,k \gets k+1.$
\end{enumerate}
\textbf{Output:} $(u^*, \lambda^*)$, where $u^*=u^k$, $\lambda^*=R(u^*)$, with $R$ defined in \cref{eq:Rayleigh}.
\end{algorithm}

For \cref{alg:simple} to be well defined, we assume that for all $k\in\N$ it holds $T(u^k)\neq 0$.
For proximal operators this can be shown to be true (cf.~\Cref{Sec::proximal_operators_analysis} below).
In the following propositions we recall some key properties of \cref{alg:simple} from \cite{HAITFRAENKEL2021103041}.

\begin{prop}
\cref{alg:simple} converges after a finite number of steps, i.e, there is $N\in\N$ such that for all $k>N$ it holds $u^{k+1}=u^k$ \textbf{if and only if} $u^k$ solves the eigenproblem \cref{eq:EV}.
\end{prop}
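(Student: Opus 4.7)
The plan is to derive both implications directly from the defining update rule $u^{k+1} = T(u^k)/\norm{T(u^k)}$ in \cref{alg:simple}, combined with the observation that every iterate is unit-normalized. The latter follows from $u^0 = f/\norm{f}$ and the explicit division by the norm at each step, so a one-line induction yields $\norm{u^k}=1$ for all $k \geq 0$. Since the algorithm's wellposedness requires $T(u^k)\neq 0$, this has already been assumed.

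For the forward direction, I would fix any $k > N$ with $u^{k+1}=u^k$ and simply rearrange the update rule:
$$u^k = \frac{T(u^k)}{\norm{T(u^k)}} \iff T(u^k) = \norm{T(u^k)}\, u^k,$$
which is exactly \cref{eq:EV} with eigenvalue $\lambda = \norm{T(u^k)} \geq 0$.

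For the backward direction, assuming $T(u^k)=\lambda u^k$, taking norms and using $\norm{u^k}=1$ yields $\norm{T(u^k)}=\abs{\lambda}$, so
$$u^{k+1} = \frac{T(u^k)}{\norm{T(u^k)}} = \frac{\lambda}{\abs{\lambda}}\,u^k = \sgn(\lambda)\, u^k.$$
When $\lambda > 0$ this immediately gives $u^{k+1}=u^k$, and a trivial induction propagates the equality to all subsequent indices.

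The only non-routine issue, and the place I would flag the main obstacle, is the sign: when $\lambda < 0$ the iteration produces the two-periodic orbit $u^{k+j}=(-1)^j u^k$ instead of stabilizing, and $\lambda=0$ is excluded by the standing assumption $T(u^k)\neq 0$. Hence the equivalence is strictly valid under the non-negative eigenvalue convention, which matches the positive-eigenvalue regime of denoisers already emphasized in \cref{Sec::def}; I would state this convention at the outset of the proof rather than treat the negative case separately.
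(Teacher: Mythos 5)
Your proof is correct and is the canonical direct argument: the paper itself does not reprove this proposition but merely recalls it from \cite{HAITFRAENKEL2021103041}, so there is no in-paper proof to compare against, and the fixed-point rearrangement $u^k = T(u^k)/\norm{T(u^k)} \iff T(u^k)=\norm{T(u^k)}\,u^k$ is exactly what is needed. Your caveat about the sign is apt and worth keeping: for $\lambda<0$ the iteration produces a 2-periodic orbit rather than a fixed point, so the stated equivalence implicitly relies on the positive-eigenvalue convention already adopted in \Cref{Sec::def}, and the case $\lambda=0$ is correctly excluded by the standing assumption $T(u^k)\neq 0$.
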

\begin{prop}\label{prop:Ray}${ }$\\
\begin{minipage}[t]{\textwidth}
\begin{enumerate}
    \item For every $k\in\N$, $|R(u^k)| \leq \|T(u^k)\|$. This holds in equality \textbf{if and only if} $u^k$ solves the eigenproblem \cref{eq:EV}.
    \item If exactly at iteration $k=N$ \cref{alg:simple} converged, then for all $k<N, |R(u^k)|<\norm{T(u^k)}$, and for all $k\geq N, |R(u^k)|=\norm{T(u^k)}$. 
\end{enumerate}
\end{minipage}
\end{prop}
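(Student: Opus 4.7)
The plan is to handle both parts together, since they rest on the same observation: after the first iteration every iterate is normalized (and the initialization $u^0=f/\norm{f}$ already is), so $\norm{u^k}=1$ for all $k\geq 0$. Consequently the generalized Rayleigh quotient collapses to $R(u^k)=\langle u^k,T(u^k)\rangle$, and the bounds become a direct application of Cauchy--Schwarz.

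For part 1, I would just write
\begin{equation*}
\abs{R(u^k)}=\abs{\langle u^k,T(u^k)\rangle}\leq \norm{u^k}\,\norm{T(u^k)}=\norm{T(u^k)}.
\end{equation*}
The equality characterization is the equality case of Cauchy--Schwarz: $\abs{\langle u^k,T(u^k)\rangle}=\norm{u^k}\norm{T(u^k)}$ holds iff $T(u^k)$ and $u^k$ are linearly dependent, i.e.\ $T(u^k)=\lambda u^k$ for some $\lambda\in\R$, which is precisely \cref{eq:EV}. Both implications follow at once, so the ``if and only if'' is immediate.

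For part 2, the reading of ``converged at iteration $N$'' that I would adopt is $u^{N+1}=u^N$ (exact convergence). Substituting this into the update rule gives $T(u^N)=\norm{T(u^N)}u^N$, so $u^N$ is an eigenvector with eigenvalue $\lambda^*=\norm{T(u^N)}\geq 0$. A trivial induction then shows $u^k=u^N$ for every $k\geq N$, and part 1 delivers $\abs{R(u^k)}=\norm{T(u^k)}$ on this tail. For the converse statement on $k<N$, I would argue by contrapositive: if $\abs{R(u^k)}=\norm{T(u^k)}$ for some $k<N$, then by part 1 we have $T(u^k)=\lambda u^k$ for some $\lambda\in\R$, hence
\begin{equation*}
u^{k+1}=\frac{T(u^k)}{\norm{T(u^k)}}=\sgn(\lambda)\,u^k.
\end{equation*}
When $\lambda\geq 0$ this yields $u^{k+1}=u^k$, contradicting the assumption that convergence first occurs at $N>k$.

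The only subtle point, and the one I would flag as the main obstacle, is the case $\lambda<0$: then $u^{k+1}=-u^k$ and the sequence does not stabilize at step $k$ itself, so the contradiction is not purely local. To close it cleanly one either restricts to operators (such as the proximal operators studied in \Cref{Sec::proximal_operators_analysis} or denoisers with non-negative Rayleigh quotient) where negative eigenvalues of the normalized iteration cannot arise, or one observes that an oscillating $\pm u^k$ trajectory never satisfies the termination criterion $u^{k+1}=u^k$ at any later index either, contradicting convergence at $N$. In the paper's setting the former is natural, since the authors explicitly announce that ``we discuss denoisers with positive eigenvalues,'' so the argument can be given with $\lambda\geq 0$ throughout without loss of generality.
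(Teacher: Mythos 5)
The paper does not actually prove this proposition: it is explicitly ``recalled'' from \cite{HAITFRAENKEL2021103041} and stated without proof, so there is nothing in the text to compare against line by line. Your argument is the natural one and is essentially correct: with $\norm{u^k}=1$ the quotient \cref{eq:Rayleigh} reduces to $\langle u^k,T(u^k)\rangle$, Cauchy--Schwarz gives the inequality, and its equality case (linear dependence, with $u^k\neq 0$) gives exactly \cref{eq:EV}. Part 2 then follows from part 1 plus the identification of ``converged at $N$'' with $u^{N+1}=u^N$, which via the update rule forces $T(u^N)=\norm{T(u^N)}\,u^N$.

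The subtlety you flag about $\lambda<0$ is a genuine one, and your second proposed fix (``an oscillating $\pm u^k$ trajectory never stabilizes'') does not actually close it for a general nonlinear $T$: after $u^{k+1}=-u^k$ there is no reason that $T(-u^k)=-T(u^k)$, so the iteration need not oscillate --- it can wander off and converge at some later $N$, in which case $\abs{R(u^k)}=\norm{T(u^k)}$ would hold at an index $k<N$ and the strict inequality claimed in part 2 would fail. The correct resolution is the first one you name: the paper restricts attention to operators with positive eigenvalues (``we discuss denoisers with positive eigenvalues''), and under that standing assumption your contrapositive argument with $\lambda\geq 0$ is complete. It would be worth stating that restriction explicitly as a hypothesis rather than leaving it as a remark at the end.
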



\begin{prop}\label{prop:theta}
The angle between $u^k$ and $T(u^k)$ is $\pi n$ for $n \in \Z$ \textbf{if and only if} $u^k$ solves the eigenproblem \cref{eq:EV}.
\end{prop}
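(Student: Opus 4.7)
The plan is to reduce the statement to the equality case of the Cauchy--Schwarz inequality. Recall that the angle $\theta$ between $u^k$ and $T(u^k)$ is defined via
\begin{equation*}
    \cos(\theta) = \frac{\langle u^k, T(u^k)\rangle}{\|u^k\|\,\|T(u^k)\|},
\end{equation*}
and that $\theta = \pi n$ for some $n\in\Z$ is equivalent to $\cos(\theta)=\pm 1$. Observe also that $u^k$ is well-defined and nonzero in \cref{alg:simple} since the iterates are normalized, and that $T(u^k)\neq 0$ is assumed (so the denominators in the angle formula are positive).

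For the forward implication, I would assume $\cos(\theta) = \pm 1$, which means
\begin{equation*}
    |\langle u^k,T(u^k)\rangle| = \|u^k\|\,\|T(u^k)\|.
\end{equation*}
This is exactly the equality case in the Cauchy--Schwarz inequality in the Hilbert space $\H$, which forces $u^k$ and $T(u^k)$ to be linearly dependent. Since $\|u^k\|=1\neq 0$, there exists $\lambda\in\R$ with $T(u^k) = \lambda u^k$, i.e., $u^k$ solves the eigenproblem \cref{eq:EV}.

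For the reverse implication, assume $T(u^k) = \lambda u^k$ for some $\lambda\in\R$. Then
\begin{equation*}
    \cos(\theta) = \frac{\lambda\|u^k\|^2}{\|u^k\|\cdot|\lambda|\,\|u^k\|} = \sgn(\lambda) \in\{-1,+1\},
\end{equation*}
where we used $T(u^k)\neq 0$ to conclude $\lambda\neq 0$. Hence $\theta = \pi n$ for some $n\in\Z$, completing the equivalence. There is no real obstacle here; the only point to be careful about is the nondegeneracy of the denominators, which is ensured by the normalization step in \cref{alg:simple} together with the standing assumption $T(u^k)\neq 0$.
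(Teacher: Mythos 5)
Your proof is correct and is essentially the canonical argument: the paper itself states this proposition without proof, deferring to the cited reference, and the Cauchy--Schwarz equality case together with the direct computation for the converse is exactly the reasoning intended there. Your attention to the nondegeneracy of the denominators (normalization of $u^k$ and the standing assumption $T(u^k)\neq 0$) is the only delicate point, and you handle it properly.
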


In our numerical experiments a Rayleigh quotient which reaches a constant value serves as a good indication for convergence to an eigenvector. 
For the operators tested, the Rayleigh quotient approximately increases to the eigenvalue, however, a general proof for this is pending.
As discussed in \Cref{Sec::def}, we also aim to reach an angle \cref{eq:Theta} close to $0$, which will serve as our validation measure. 
Hence, we define
\begin{definition}\label{def::approx_eigen}
We call $u$ an approximate (positive) eigenvector of $T$ if the angle $\theta$ given by \cref{eq:Theta} meets $0 < \theta < 0.5^\circ$ ($1^\circ=\pi/180$). 
\end{definition}




We now examine the behavior of \cref{alg:simple} in two different cases. 
On one hand, we prove its convergence to a non-trivial eigenvector for proximal operators.
On the other hand, we present a very simple toy example for the far more complex, non-homogeneous neural network, for which \cref{alg:simple} is not able to produce meaningful eigenvectors.

\subsection{Analysis of a Proximal Power Method}
\label{Sec::proximal_operators_analysis}
In this section we analyse a nonlinear power method associated to the proximal operator of a convex functional. Let $J:\H\to\R\cup\{\infty\}$ be an absolutely one-homogeneous, convex, and lower semi-continuous functional, defined on a Hilbert space $\H$.
Absolute one-homogeneity means that for all $u\in\H$ and numbers $c\neq 0$
\begin{align}
    J(cu)=|c|J(u),\quad J(0)=0.
\end{align}
For detailed properties of such functionals see \cite{bungert2019nonlinear,burger2016spectral}.
The proximal operator of $J$ is given by
\begin{align}\label{eq:prox}
    \prox_\alpha^J(u):=\argmin_{v\in\H}\frac{1}{2}\norm{v-u}^2+\alpha J(v),
\end{align}
where $f\in\H$ and $\alpha\geq 0$ denotes the regularization parameter. 
A prototypical example for $J$ is given by the total variation, defined on the Hilbert space $\H=L^2(\Omega)$ by setting 
\begin{align}\label{eq:tv}
    J(u)=
    \sup\left\lbrace\int_\Omega u\div\phi\d x\st \phi\in C^\infty_c(\Omega),\,\sup_{x\in\Omega}|\phi(x)|\leq 1\right\rbrace,\quad u\in L^2(\Omega).
\end{align}
In this case the proximal operator \cref{eq:prox} coincides with the solution of the famous Rudin-Osher-Fatemi model \cite{rudin1992nonlinear}.
We will now analyze \cref{alg:simple} with the nonlinear operator
\begin{equation}\label{eq:Tu_prox}
    T(u) = \prox_{\alpha(u)}^J(u),
\end{equation}
which gives the \emph{proximal power method}
\begin{align}\label{eq:proximal_power_it}
\begin{cases}
    u^0&={f}/{\norm{f}},\\
    v^{k}&=\prox_{\alpha(u^k)}^J(u^k),\quad k\geq 1,\\
    u^{k+1}&={v^{k}}/{\norm{v^k}}.
\end{cases}
\end{align}
Here $\alpha(u)$ denotes regularization parameters which are allowed to depend on $u$.
\revision{Hence, the operator $T$ in \eqref{eq:Tu_prox} is more general than a proximal operator. Indeed, its regularization parameter can be chosen adaptively to accelerate the convergence of the proximal power method \eqref{eq:proximal_power_it}, as our numerical results show.}
Constant parameters can be considered by setting $\alpha(u)\equiv\alpha>0$, \revision{in which case $T$ coincides with the standard proximal operator}.
Furthermore, $f\in\calN(J)^\perp$ where 
\begin{align}
    \calN(J)=\{u\in\H\st J(u)=0\}
\end{align}
is referred to as null-space of $J$ and indeed is a \revision{linear space \cite{benning2013ground,bungert2019solution}}.
An assumption which does not restrict generality but simplifies the notation is that one considers the proximal operator \cref{eq:prox} acting on $u\in\calN(J)^\perp$ only.
Here $\calN(J)^\perp$ denotes the orthogonal complement of the null-space.
This is due to the fact that it holds 
\begin{align}\label{eq:shift_invariance_prox}
    \prox_\alpha^J(u)=\prox_\alpha^J(u-\overline{u})+\overline{u},
\end{align}
where $\overline{u}\in\calN(J)$ denotes the orthogonal projection of $u\in\H$ onto $\calN(J)$.
Furthermore, $\prox_\alpha^J(u)\in\calN(J)^\perp$ if $u\in\calN(J)^\perp$.
If, for example, $J$ equals the total variation, then the null-space consists of constant functions and its orthogonal complement is given by all zero-mean functions.
In this case, the proximal operator is invariant under the mean value $\overline{u}$ of its input~$u$.

To show convergence of power method associated to the operator $T$ in \cref{eq:Tu_prox}, we need two assumptions on the interplay between the functional $J$ and the Hilbert norm $\norm{\cdot}$.
\begin{assumption}[Poincar\'{e} inequality]\label{ass:poincare}
There is a constant $c_P>0$ such that $c_P\norm{u}\leq J(u)$ for all $u\in\calN(J)^\perp$.
\end{assumption}
\begin{assumption}[Compact sub-level sets]\label{ass:compact}
The sub-level sets of $\norm{\cdot}+J(\cdot)$ are compact.
\end{assumption}
\begin{example}
A relevant example where \cref{ass:poincare} and \cref{ass:compact} are fulfilled is $J(u)=\norm{\nabla u}_{p}$, where $p\in\left(\frac{2n}{n+2},\infty\right]$ and $\Omega\subset\R^n$ is a bounded Lipschitz domain with $n\geq 2$ (cf.~\cite{bungert2019asymptotic} and \cite{bungert2020structural} for $p=\infty$).
Note that in the relevant case that $J$ equals the total variation \cref{eq:tv} the assumptions hold true only in dimension $n=1$, since the compact embedding $\mathrm{BV}(\Omega)\Subset L^2(\Omega)$ exists only in one dimension.
In two dimensions the embedding is only continuous and in higher dimensions it does not even exist.
However, by demanding additional regularity for the initial condition $u^0$ of the power method, as for instance $u^0\in L^\infty(\Omega)$, one can show that \cref{ass:poincare} and \cref{ass:compact} hold true \emph{along} the sequence which is generated by the iteration (cf.~\cite{bungert2019nonlinear}).
\end{example}
\begin{example}
If $\H$ is finite-dimensional and $J$ is norm on a subspace of $\H$, then both assumptions are met due to the equivalence of norms in finite dimensions.
\end{example}
In the following, we will need an important result (see for instance~\cite{bauschke2011convex}) which characterizes the subdifferential of absolutely one-homogeneous functionals
\begin{prop}[Subdifferential]\label{prop:subdifferential}
Let $J:\H\to\R\cup\{\infty\}$ be convex and absolutely one-homogeneous.
Then its subdifferential in $u\in\H$ is given by 
\begin{align}
    \partial J(u)=\{p\in\H\st\langle p,v\rangle\leq J(v),\,\forall v\in\H,\,\langle p,u\rangle=J(u)\}.
\end{align}
\end{prop}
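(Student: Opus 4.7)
The proof will consist of verifying the two inclusions between the subdifferential $\partial J(u)$, defined by the standard inequality
\begin{align*}
J(v)\geq J(u)+\langle p,v-u\rangle\quad\forall v\in\H,
\end{align*}
and the set on the right-hand side. The plan is to exploit absolute one-homogeneity through carefully chosen test vectors that are collinear with $u$.

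For the inclusion $\partial J(u)\subseteq\{p:\langle p,v\rangle\leq J(v)\,\forall v,\;\langle p,u\rangle=J(u)\}$, I would begin by deriving the equality $\langle p,u\rangle=J(u)$. The idea is to substitute $v=0$ and $v=2u$ into the subgradient inequality. Since $J(0)=0$ and $J(2u)=2J(u)$ by absolute one-homogeneity, these two substitutions yield $\langle p,u\rangle\geq J(u)$ and $\langle p,u\rangle\leq J(u)$, respectively, pinning down the desired equality. Once this identity is in hand, substituting it back into the subgradient inequality for arbitrary $v$ reduces the right-hand side $J(u)+\langle p,v-u\rangle$ to $\langle p,v\rangle$, giving $\langle p,v\rangle\leq J(v)$ for every $v\in\H$.

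For the reverse inclusion, assume $\langle p,v\rangle\leq J(v)$ for all $v\in\H$ together with $\langle p,u\rangle=J(u)$. Then I would verify the subgradient inequality directly by the one-line chain
\begin{align*}
J(v)\geq\langle p,v\rangle=\langle p,u\rangle+\langle p,v-u\rangle=J(u)+\langle p,v-u\rangle,
\end{align*}
which gives $p\in\partial J(u)$.

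There is really no substantial obstacle in this argument once the correct test vectors are identified. The only conceptual point worth emphasizing is that absolute one-homogeneity is precisely what makes the test directions $v=0$ and $v=2u$ informative: the upper and lower linear estimates they produce at the point $u$ match exactly at $J(u)$. Without this scaling property one would only be able to conclude the weaker Fenchel--Young-type estimate $\langle p,u\rangle\leq J(u)+J^*(p)$. Equivalently, one is essentially showing that for absolutely one-homogeneous $J$ the convex conjugate $J^*$ coincides with the indicator of $\{p:\langle p,v\rangle\leq J(v)\,\forall v\}$, so that $\partial J(u)$ reduces to the explicit description in the statement.
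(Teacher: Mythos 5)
Your proof is correct and complete: the test vectors $v=0$ and $v=2u$ together with $J(0)=0$ and $J(2u)=2J(u)$ pin down $\langle p,u\rangle=J(u)$, and both inclusions then follow exactly as you write. The paper does not prove this proposition at all — it cites it as a standard fact from the literature (Bauschke--Combettes) — and your argument is precisely the standard one that reference gives, so there is nothing to compare beyond noting that you have supplied the omitted details correctly.
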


Our first result characterizes the maximal regularization parameter $\alpha(u)$ such that the proximal power method \cref{eq:proximal_power_it} is well-defined.
\begin{prop}\label{prop:extinction}
For $u\in \calN(J)^\perp$ it holds that $T(u)=\prox_{\alpha(u)}^J(u)=0$ if and only if $\alpha(u)\geq J_*(u)$ where
\begin{align}\label{eq:dual_norm}
    J_*(u):=\sup_{p\in \calN(J)^\perp}\frac{\langle u,p\rangle}{J(p)}\geq\frac{\norm{u}^2}{J(u)}.
\end{align}
Furthermore, equality holds in \cref{eq:dual_norm} if and only if $u$ is an eigenvector of $\partial J$, meaning $\lambda u\in\partial J(u)$ for $\lambda=J(u)/\norm{u}^2$.
\end{prop}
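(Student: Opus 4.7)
The plan is to read off the characterization directly from the first-order optimality condition for the proximal problem. By Fermat's rule, $v=0$ minimizes $v\mapsto\frac{1}{2}\norm{v-u}^2+\alpha J(v)$ if and only if $0\in -u+\alpha\partial J(0)$, i.e., $u/\alpha\in\partial J(0)$. Using the description of $\partial J$ from \cref{prop:subdifferential} at the point $0$ (where the equality constraint $\langle p,0\rangle=J(0)=0$ holds automatically), this is equivalent to the family of inequalities $\langle u,v\rangle\leq \alpha J(v)$ for every $v\in\H$.

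Next I would reduce this family to a single scalar condition. Inequalities with $v\in\calN(J)$ are trivially satisfied, since then $J(v)=0$ and $\langle u,v\rangle=0$ by the assumption $u\in\calN(J)^\perp$. For $v$ with $J(v)>0$, I decompose $v=v_\perp+v_0$ with $v_\perp\in\calN(J)^\perp$ and $v_0\in\calN(J)$. Using the translation invariance $J(v_\perp+v_0)=J(v_\perp)$ of absolutely one-homogeneous convex functionals \cite{bungert2019nonlinear,burger2016spectral} together with orthogonality $\langle u,v_0\rangle=0$, the quotient $\langle u,v\rangle/J(v)$ depends only on $v_\perp$. Hence the supremum over all admissible $v$ equals $J_*(u)$, and the optimality condition becomes exactly $\alpha\geq J_*(u)$.

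The lower bound $J_*(u)\geq \norm{u}^2/J(u)$ is obtained simply by testing the supremum defining $J_*(u)$ with the choice $p=u$, which is admissible whenever $u\in\calN(J)^\perp\setminus\{0\}$ since then $J(u)>0$ (as $\calN(J)\cap\calN(J)^\perp=\{0\}$).

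For the equality case I would set $\lambda:=J(u)/\norm{u}^2$. Equality in the lower bound means that $p=u$ attains the supremum, which rearranges to the two conditions $\langle \lambda u,p\rangle\leq J(p)$ for all $p\in\H$ and $\langle \lambda u,u\rangle=J(u)$. By \cref{prop:subdifferential} these are precisely the conditions for $\lambda u\in\partial J(u)$, i.e., $u$ is an eigenvector of $\partial J$ with eigenvalue $\lambda$. Conversely, starting from $\mu u\in\partial J(u)$, pairing with $u$ forces $\mu=J(u)/\norm{u}^2=\lambda$, and the subgradient inequality yields $\langle u,p\rangle/J(p)\leq\norm{u}^2/J(u)$ for every $p\in\calN(J)^\perp\setminus\{0\}$, so the lower bound is attained. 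The only slightly delicate step is the reduction of the supremum to $\calN(J)^\perp$ via translation invariance of $J$ along its null-space; everything else follows directly from Fermat's rule and \cref{prop:subdifferential}.
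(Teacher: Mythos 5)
Your proof is correct, and for the equality characterization it follows essentially the same route as the paper: test the supremum with $p=u$ for the lower bound, and use \cref{prop:subdifferential} in both directions to identify equality with $\lambda u\in\partial J(u)$, $\lambda=J(u)/\norm{u}^2$. The one place you genuinely diverge is the first claim (the extinction criterion $\alpha\geq J_*(u)$): the paper does not prove this at all but delegates it to \cite{bungert2019solution}, whereas you derive it directly from Fermat's rule, $0=\prox_\alpha^J(u)\iff u/\alpha\in\partial J(0)$, combined with the description of $\partial J(0)$ from \cref{prop:subdifferential} and the reduction of the resulting family of inequalities to $\calN(J)^\perp$ via null-space invariance of $J$. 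This is the standard argument (the paper itself later remarks that $\alpha(u)\geq J_*(u)$ is equivalent to $u/\alpha(u)\in\partial J(0)$), and it makes the proposition self-contained, which is a mild improvement over the citation. One small point worth making explicit in both your argument and the paper's: the inequality $\langle\lambda u,p\rangle\leq J(p)$ is obtained from $J_*$ only for $p\in\calN(J)^\perp$, and extending it to all $p\in\H$ (as \cref{prop:subdifferential} requires) uses the same orthogonal decomposition $p=p_\perp+p_0$ with $J(p)=J(p_\perp)$ and $\langle u,p_0\rangle=0$ that you already set up; this is routine but should be said once.
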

\begin{proof}
The first claim was proved in \cite{bungert2019solution} and we just show the second one. 
By choosing $p=u$ in the supremum one always has $J_*(u)\geq\norm{u}^2/J(u)$.
If $u$ is an eigenvector there is $\lambda\geq 0$ such that $\lambda u\in\partial J(u)$. 
Using \cref{prop:subdifferential} we can draw two conclusions. First of all, it holds $\langle\lambda u,u\rangle =J(u)$ and hence $\lambda=J(u)/\norm{u}^2$. 
Secondly, one has $\langle\lambda u,p\rangle\leq J(p)$ for all $p\in\H$ which implies
\begin{align*}
    J_*(u)=\sup_{p\in \calN(J)^\perp}\frac{\langle u,p\rangle}{J(p)}=\frac{1}{\lambda}\sup_{p\in \calN(J)^\perp}\frac{\langle\lambda u,p\rangle}{J(p)}\leq\frac{1}{\lambda}=\frac{\norm{u}^2}{J(u)},
\end{align*}
such that $J_*(u)=\norm{u}^2/J(u)$.
Conversely, if $J_*(u)=\norm{u}^2/J(u)$ then one obtains
\begin{align*}
    \langle\lambda u,v\rangle=\lambda J(v)\frac{\langle u,v\rangle}{J(v)}\leq\lambda J(v)J_*(u)=J(v),\quad\forall v\in\H,
\end{align*}
for $\lambda = J(u)/\norm{u}^2$. 
Hence, $\lambda u\in\partial J(u)$.
\end{proof}
\revision{%
\begin{rem}
The condition $\alpha(u)\geq J_*(u)$ can be equivalently formulated through $\frac{u}{\alpha(u)}\in\partial J(0)$ (see~\cite[Equation (2.10)]{bungert2019nonlinear}).
\end{rem}}
\begin{corollary}[Well-definedness of the proximal power method]\label{cor:well-defined}
Let $\alpha(u)$ satisfy
\begin{align}
    \alpha(u)<J_*(u)
\end{align}
Then for all initial conditions $f\in\calN(J)^\perp$ the proximal power method \cref{eq:proximal_power_it} is well-defined.
\end{corollary}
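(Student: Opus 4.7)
The plan is to establish well-definedness by a straightforward induction, checking at each step that (i) the current iterate $u^k$ lies in $\calN(J)^\perp$ (so that the condition $\alpha(u^k)<J_*(u^k)$ is meaningful and the previous proposition on extinction applies), and (ii) the output $v^k=\prox_{\alpha(u^k)}^J(u^k)$ is non-zero, so that the normalization step $u^{k+1}=v^k/\norm{v^k}$ is legitimate.

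For the base case I would note that $u^0=f/\norm{f}$ is well-defined because $f\neq 0$ (as $f\in\calN(J)^\perp$ and we are implicitly excluding the trivial input; this is already required for the iteration to start) and $u^0\in\calN(J)^\perp$ because $\calN(J)^\perp$ is a linear subspace.

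For the inductive step, assuming $u^k\in\calN(J)^\perp\setminus\{0\}$, I would invoke two ingredients stated earlier in the paper. First, by the remark following \cref{eq:shift_invariance_prox}, one has $\prox_{\alpha(u^k)}^J(u^k)\in\calN(J)^\perp$, so $v^k\in\calN(J)^\perp$. Second, the standing hypothesis $\alpha(u^k)<J_*(u^k)$ together with \Cref{prop:extinction} yields $v^k=T(u^k)\neq 0$. Hence $u^{k+1}=v^k/\norm{v^k}$ is well-defined and, since $v^k\in\calN(J)^\perp$ and this subspace is closed under positive scaling, we also obtain $u^{k+1}\in\calN(J)^\perp\setminus\{0\}$, closing the induction.

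The only mild subtlety — not really an obstacle — is checking that $J_*(u^k)$ is finite and strictly positive, so that the strict inequality $\alpha(u^k)<J_*(u^k)$ can indeed be satisfied. From the bound $J_*(u)\geq\norm{u}^2/J(u)$ in \cref{eq:dual_norm} and the Poincar\'e inequality (\Cref{ass:poincare}) one obtains $J_*(u^k)\geq \norm{u^k}/c_P>0$ since $u^k\in\calN(J)^\perp\setminus\{0\}$. Putting these observations together gives the well-definedness of the full sequence $\{u^k\}_{k\in\N}$ produced by \cref{eq:proximal_power_it}.
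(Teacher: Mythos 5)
Your argument is essentially the paper's own proof: the paper simply invokes \cref{prop:extinction} to conclude $v^k=\prox_{\alpha(u^k)}^J(u^k)\neq 0$ under $\alpha(u^k)<J_*(u^k)$, and your induction (iterates stay in $\calN(J)^\perp$ by the invariance noted after \cref{eq:shift_invariance_prox}, hence the extinction result applies at every step) just makes explicit what the paper leaves implicit. One correction to your side remark on positivity of $J_*(u^k)$: the Poincar\'e inequality $J(u)\geq c_P\norm{u}$ makes the denominator in $\norm{u}^2/J(u)$ \emph{larger}, so it yields the upper bound $J_*(u)\leq\norm{u}/c_P$ rather than the lower bound $J_*(u)\geq\norm{u}/c_P$ you wrote; strict positivity of $J_*(u^k)$ instead follows directly from $J_*(u^k)\geq\norm{u^k}^2/J(u^k)>0$, which requires $0<J(u^k)<\infty$ (guaranteed along the iteration once $J(u^0)<\infty$, as assumed in the parameter rules).
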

\begin{proof}
By \cref{prop:extinction} we know that $T(u^k)=\prox_{\alpha(u^k)}^J(u^k)\neq 0$ if and only if $\alpha(u^k)<J_*(u^k)$. 
Hence $v^{k}\neq 0$ in \cref{eq:proximal_power_it} which makes the iteration well-defined.
\end{proof}
Next, we show that the functional values $J(u^k)$ decrease along the proximal power method~\cref{eq:proximal_power_it}.
This will be the key ingredient for convergence of the algorithm and follows from
\begin{prop}[Decrease of the Rayleigh quotient]\label{prop:decrease_rayleigh}
Let $u\in\calN(J)^\perp\setminus\{0\}$, $v:=\prox_{\alpha(u)}^J(u)$, and $\alpha(u)<J_*(u)$. Then it holds
\begin{align}\label{ineq:decrease}
    \frac{J(v)}{\norm{v}}\leq\frac{J(u)}{\norm{u}}
\end{align}
with equality if and only if $\lambda u\in\partial J(u)$ for $\lambda=J(u)/\norm{u}^2$.
\end{prop}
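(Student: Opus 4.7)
The plan is to exploit the first-order optimality condition for the proximal operator in combination with \cref{prop:subdifferential} and the Cauchy--Schwarz inequality. First I would introduce $p := (u-v)/\alpha(u)$, which by the definition of the proximal operator satisfies $p \in \partial J(v)$. Applying \cref{prop:subdifferential} then produces two key relations: the identity
\begin{equation*}
\alpha(u) J(v) = \langle u, v\rangle - \norm{v}^2,
\end{equation*}
coming from $\langle p, v\rangle = J(v)$, and the inequality
\begin{equation*}
\alpha(u) J(u) \geq \norm{u}^2 - \langle u, v\rangle,
\end{equation*}
coming from $\langle p, u\rangle \leq J(u)$. The degenerate case $\alpha(u) = 0$ gives $v = u$ and makes \cref{ineq:decrease} trivial, so I assume $\alpha(u) > 0$ from now on.

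To establish \cref{ineq:decrease}, I would divide the first relation by $\norm{v}$ and the second by $\norm{u}$, and then apply Cauchy--Schwarz in the forms $\langle u, v\rangle/\norm{v} \leq \norm{u}$ and $\langle u, v\rangle/\norm{u} \leq \norm{v}$. These chain into
\begin{equation*}
\alpha(u)\, \frac{J(v)}{\norm{v}} \;\leq\; \norm{u} - \norm{v} \;\leq\; \alpha(u)\,\frac{J(u)}{\norm{u}},
\end{equation*}
and dividing by $\alpha(u) > 0$ yields \cref{ineq:decrease}.

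For the equality statement, both Cauchy--Schwarz bounds above must be tight, which forces $v$ and $u$ to be collinear. Since the identity $\alpha(u) J(v) = \langle u, v\rangle - \norm{v}^2$ together with $J(v) \geq 0$ implies $\langle u, v\rangle \geq \norm{v}^2 > 0$ (with $v \neq 0$ guaranteed by \cref{cor:well-defined}), collinearity means $v = tu$ with $t > 0$. Then $p = (1-t)u/\alpha(u)$, and absolute one-homogeneity of $J$ yields $\partial J(v) = \partial J(tu) = \partial J(u)$, so $p \in \partial J(u)$. Setting $\lambda := (1-t)/\alpha(u)$ gives $\lambda u \in \partial J(u)$, and the identity $\langle \lambda u, u\rangle = J(u)$ from \cref{prop:subdifferential} fixes $\lambda = J(u)/\norm{u}^2$. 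Conversely, given $\lambda u \in \partial J(u)$ with $\lambda = J(u)/\norm{u}^2$, \cref{prop:extinction} implies $\alpha(u)\lambda < 1$, and $v := (1 - \alpha(u)\lambda)u$ satisfies $(u-v)/\alpha(u) = \lambda u \in \partial J(u) = \partial J(v)$ by the $0$-homogeneity of $\partial J$; this is the optimality condition for the proximal operator, so equality in \cref{ineq:decrease} is immediate.

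The main subtle point is the equality characterization, where the combination of Cauchy--Schwarz tightness and absolute one-homogeneity of $J$ (which gives $\partial J(tu) = \partial J(u)$ for $t > 0$) must be invoked carefully to identify the eigenvector relation. The rest of the argument reduces to a short computation once the two subdifferential relations are in place.
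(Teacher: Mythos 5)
Your proof is correct. It arrives at the same conclusion as the paper's but by a genuinely different route: the paper works with the variational (energy) characterization of the proximal operator, testing $\tfrac{1}{2}\norm{v-u}^2+\alpha(u)J(v)\leq\tfrac{1}{2}\norm{w-u}^2+\alpha(u)J(w)$ against the single competitor $w=\tfrac{\norm{v}}{\norm{u}}u$ and using one application of Cauchy--Schwarz (namely $\norm{w-u}\leq\norm{v-u}$) to get $J(v)\leq\tfrac{\norm{v}}{\norm{u}}J(u)$ directly; the equality case there is forced by strict convexity of the objective, which gives $v=w$. You instead pass to the first-order optimality condition $p=(u-v)/\alpha(u)\in\partial J(v)$ and exploit \cref{prop:subdifferential} to obtain the identity $\langle p,v\rangle =J(v)$ and the bound $\langle p,u\rangle\leq J(u)$, then apply Cauchy--Schwarz twice to sandwich $\norm{u}-\norm{v}$ between $\alpha(u)J(v)/\norm{v}$ and $\alpha(u)J(u)/\norm{u}$. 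The two arguments are essentially dual (energy comparison versus subgradient inequality), and the endgame coincides: collinearity $v=tu$ with $t>0$, then $\partial J(tu)=\partial J(u)$ and $\langle\lambda u,u\rangle=J(u)$ to fix $\lambda$. What your version buys: the degenerate case $\alpha(u)=0$ is dealt with explicitly, and you actually prove the converse implication (eigenvector implies equality) via the closed form $v=(1-\alpha(u)\lambda)u$, a direction the paper's proof leaves implicit. What the paper's version buys: it is shorter, needing only one competitor and one Cauchy--Schwarz, and it never has to invoke \cref{prop:subdifferential}.
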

\begin{proof}
The optimality of $v$ means that 
\begin{align*}
    \frac{1}{2}\norm{v-u}^2+\alpha(u)J(v)\leq\frac{1}{2}\norm{w-u}^2+\alpha(u)J(w),\quad\forall w\in\H,
\end{align*}
with equality if and only if $w=v$, due to the strict convexity of the objective functional.
We define $w:=\frac{\norm{v}}{\norm{u}}u$ which meets
\begin{align*}
    \norm{w-u}^2=\norm{w}^2-2\langle w,u\rangle+\norm{u}^2
    =\norm{v}^2-2\norm{v}\norm{u}+\norm{u}^2
    \leq\norm{v-u}^2.
\end{align*}
Plugging this into the optimality above yields
\begin{align*}
    \alpha(u)J(v)\leq\alpha(u)J(w)=\alpha(u)\frac{\norm{v}}{\norm{u}}J(u).
\end{align*}
From \cref{prop:extinction} we infer that $v\neq 0$.
Hence, we can divide by $\norm{v}$, cancel $\alpha(u)$, and arrive at the assertion. 
Equality holds if and only if 
$$v=w=\frac{\norm{v}}{\norm{u}}u=cu$$
where $c:=\norm{v}/\norm{u}>0$. 
The optimality condition for problem \cref{eq:prox} is given by
$$\frac{u-v}{\alpha(u)}\in\partial J(v).$$
Plugging in the expression for $v$ yields
$$\frac{1-c}{\alpha(u)}u\in\partial J(u),$$
where we used that $\partial J(v)=\partial J(cu)=\partial J(u)$ since $c>0$ and $J$ is absolutely one-homogeneous (cf.~\cite{bungert2019nonlinear}).
Hence, we get $\lambda u\in\partial J(u)$ with $\lambda={(1-c)}/{\alpha(u)}={J(u)}/{\norm{u}^2}$.
\end{proof}
As mentioned above we can now prove that the power method decreases the energy $J$.
\begin{corollary}[Energy decrease]\label{cor:decrease_J}
Let $\alpha(u)<J_*(u)$.
Then the iterates of \cref{eq:proximal_power_it} with $f\in\calN(J)^\perp$ fulfill
\begin{align}
    J(u^{k+1})&\leq J(u^k),
\end{align}
with equality if and only if $\lambda u^k\in\partial J(u^k)$ for $\lambda=J(u^k)/\norm{u^k}^2$.
\end{corollary}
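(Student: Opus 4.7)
The plan is to reduce this corollary directly to Proposition \ref{prop:decrease_rayleigh} by exploiting the fact that the iterates $u^k$ are normalized and that $J$ is absolutely one-homogeneous.

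First I would observe that the construction in \cref{eq:proximal_power_it} ensures $\norm{u^k}=1$ for every $k\geq 0$. Indeed, $u^0=f/\norm{f}$ has unit norm, and for $k\geq 1$ the update $u^{k+1}=v^k/\norm{v^k}$ is defined precisely so that $\norm{u^{k+1}}=1$; note that $v^k\neq 0$ is guaranteed by \Cref{cor:well-defined} together with the assumption $\alpha(u^k)<J_*(u^k)$, and an induction shows that all iterates remain in $\calN(J)^\perp$ since the proximal operator preserves this subspace.

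Next, I would apply \Cref{prop:decrease_rayleigh} with $u:=u^k$ and $v:=v^k=\prox_{\alpha(u^k)}^J(u^k)$. This yields
\begin{align*}
    \frac{J(v^k)}{\norm{v^k}}\leq\frac{J(u^k)}{\norm{u^k}}=J(u^k),
\end{align*}
where the last equality uses $\norm{u^k}=1$. Now I would invoke absolute one-homogeneity of $J$ to compute
\begin{align*}
    J(u^{k+1})=J\!\left(\frac{v^k}{\norm{v^k}}\right)=\frac{J(v^k)}{\norm{v^k}},
\end{align*}
which combined with the previous inequality gives $J(u^{k+1})\leq J(u^k)$, as claimed.

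For the equality case, the chain above shows that $J(u^{k+1})=J(u^k)$ holds if and only if $J(v^k)/\norm{v^k}=J(u^k)/\norm{u^k}$, which by \Cref{prop:decrease_rayleigh} is equivalent to $\lambda u^k\in\partial J(u^k)$ with $\lambda=J(u^k)/\norm{u^k}^2$. Since essentially all technical work has been done in \Cref{prop:decrease_rayleigh} and \Cref{cor:well-defined}, I do not anticipate a genuine obstacle here; the only subtlety worth checking is that all iterates remain in $\calN(J)^\perp$ so that those earlier results are applicable, which follows from the invariance property of $\prox_\alpha^J$ on $\calN(J)^\perp$ noted in the surrounding discussion.
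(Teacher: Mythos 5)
Your proposal is correct and follows essentially the same route as the paper's own proof: apply \cref{prop:decrease_rayleigh} with $u=u^k$ and $v=v^k$, then use $\norm{u^k}=1$ and the absolute one-homogeneity of $J$ to identify $J(u^{k+1})$ with $J(v^k)/\norm{v^k}$. The extra checks you include (well-definedness of the iteration via \cref{cor:well-defined} and invariance of $\calN(J)^\perp$ under the proximal operator) are consistent with the surrounding discussion and do not change the argument.
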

\begin{proof}
Applying \cref{prop:decrease_rayleigh} to $u=u^k$ and $v=\prox_{\alpha(u^k)}^J(u^k)$ yields
$$J(u^{k+1})=J\left(\frac{\prox_{\alpha(u^k)}^J(u^k)}{\norm{\prox_{\alpha(u^k)}^J(u^k)}}\right)=\frac{J(\prox_{\alpha(u^k)}^J(u^k))}{\norm{\prox_{\alpha(u^k)}^J(u^k)}}\leq\frac{J(u^k)}{\norm{u^k}}=J(u^k),$$
where we used the absolute one-homogeneity of $J$ and $\norm{u^k}=1$.
\end{proof}

Some remarks regarding the admissible choice of $\alpha(u)$ are in order.
\begin{rem}
For a quick algorithm one does not have to compute the dual norm $J_*(u)$, which bounds the admissible regularization parameters $\alpha(u)$, explicitly. 
Instead, one can make use of the lower bound $J_*(u)\geq\norm{u}^2/J(u)$ which was derived in \cref{prop:extinction}.
\end{rem}
\begin{rem}[Constant regularization parameter]
Of course, choosing a constant regularization parameter in \cref{eq:proximal_power_it}, which does not depend on the previous iterate, is possible.
Let us choose $\alpha(u)\equiv\alpha<1/J(u^0)$ for all $u\in\H$. 
Then by \cref{cor:decrease_J} it holds that $\alpha(u^k)<1/J(u^0)\leq 1/J(u^k)$ for any $k\in\N$ and hence the proximal power method \cref{eq:proximal_power_it} is well-defined according to \cref{cor:well-defined}.
\end{rem}
Before we can prove our main theorem, the convergence of the proximal power method, we need a lemma which studies continuity properties of the operator \cref{eq:Tu_prox}.
\begin{lemma}[Continuity of the proximal operator]\label{lem:convergences}
Let $(u^k)\subset\calN(J)^\perp$ be a sequence converging to $u^*$, and let $v^k:=\prox_{\alpha(u^k)}^J(u^k)$ for $k\in\N$.
If the sequence of regularization parameters fulfills $\lim_{k\to\infty}\alpha(u^k)=\alpha^*>0$ then $(v^k)$ converges to some $v^*\in\H$ and it holds $v^*=\prox_{\alpha^*}^J(u^*)$.
\end{lemma}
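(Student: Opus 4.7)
The plan is to proceed by extracting a convergent subsequence via compactness and identifying its limit via the variational definition of the proximal operator, then using uniqueness to upgrade to convergence of the full sequence.

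First, I would establish a uniform bound on the sequence $(v^k)$. Using $w=0$ as a competitor in the optimality of $v^k$ gives
\begin{align*}
    \tfrac{1}{2}\norm{v^k-u^k}^2+\alpha(u^k)J(v^k)\leq\tfrac{1}{2}\norm{u^k}^2.
\end{align*}
Since $(u^k)$ is convergent and hence bounded, and since $\alpha(u^k)\to\alpha^*>0$ (so $\alpha(u^k)$ is bounded below by some $\underline{\alpha}>0$ for large $k$), one obtains a uniform bound on $\norm{v^k-u^k}$ (and hence on $\norm{v^k}$) and on $J(v^k)$. Consequently $\norm{v^k}+J(v^k)$ remains in a bounded sub-level set.

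Next, invoking \cref{ass:compact}, the sub-level sets of $\norm{\cdot}+J(\cdot)$ are compact, so there exists a subsequence (not relabeled) with $v^k\to \tilde v$ in $\H$ for some $\tilde v\in\H$. To identify $\tilde v$ as $\prox_{\alpha^*}^J(u^*)$, I would pass to the limit in the variational inequality
\begin{align*}
    \tfrac{1}{2}\norm{v^k-u^k}^2+\alpha(u^k)J(v^k)\leq\tfrac{1}{2}\norm{w-u^k}^2+\alpha(u^k)J(w),\quad\forall w\in\H.
\end{align*}
The right-hand side converges to $\tfrac{1}{2}\norm{w-u^*}^2+\alpha^* J(w)$ by continuity. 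For the left-hand side, $\norm{v^k-u^k}^2\to\norm{\tilde v-u^*}^2$ by continuity of the norm, and $\liminf_{k\to\infty}J(v^k)\geq J(\tilde v)$ by lower semi-continuity of $J$. Combined with $\alpha(u^k)\to\alpha^*>0$, the lim inf of the left-hand side is at least $\tfrac{1}{2}\norm{\tilde v-u^*}^2+\alpha^* J(\tilde v)$, so $\tilde v$ satisfies the variational inequality characterizing $\prox_{\alpha^*}^J(u^*)$. By strict convexity of $w\mapsto \tfrac{1}{2}\norm{w-u^*}^2+\alpha^* J(w)$, the minimizer is unique, so $\tilde v=v^*:=\prox_{\alpha^*}^J(u^*)$.

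Finally, I would promote subsequential convergence to convergence of the full sequence by a standard subsequence-of-subsequence argument: the above reasoning applies to every subsequence of $(v^k)$, each of which admits a further subsequence converging to the same limit $v^*$, so the whole sequence converges. The main subtlety, and the step I expect to require the most care, is the lim inf argument on the left-hand side: one needs both strong convergence of $v^k$ (to handle the quadratic term, since weak convergence would not suffice for $\norm{v^k-u^k}^2$ unless combined with norm convergence) and lower semi-continuity of $J$ along the strongly convergent subsequence; fortunately \cref{ass:compact} provides exactly strong convergence along a subsequence, which resolves this cleanly.
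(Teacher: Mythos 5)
Your proposal is correct and follows essentially the same route as the paper's proof: the $v=0$ competitor to bound $\norm{v^k}+J(v^k)$, compactness of sub-level sets (\cref{ass:compact}) to extract a strongly convergent subsequence, lower semi-continuity of $J$ together with strong convergence to pass to the limit in the variational inequality, and uniqueness of the minimizer to identify the limit and upgrade to convergence of the full sequence. Your write-up is in fact slightly more explicit than the paper's on the last two points (strict convexity and the subsequence-of-subsequences step), but the argument is the same.
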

\begin{proof}
From the optimality of $v^k$ we deduce
$$\frac{1}{2}\norm{v^k-u^k}^2+\alpha(u^k)J(v^k)\leq\frac{1}{2}\norm{v-u^k}^2+\alpha(u^k)J(v),\quad\forall v\in\H.$$
Choosing $v=0$, we can infer
\begin{align}\label{ineq:optimality_vk}
    \limsup_{k\to\infty}\frac{1}{2}\norm{v^k-u^k}^2+\alpha(u^k)J(v^k)\leq\limsup_{k\to\infty}\frac{1}{2}\norm{u^k}^2<\infty,
\end{align}
since $(u^k)$ is a convergent sequence and hence bounded.
By triangle inequality it holds
\begin{align*}
    \norm{v^k}\leq\norm{v^k-u^k}+\norm{u^k}
\end{align*}
which together with \cref{ineq:optimality_vk} shows that $\limsup_{k\to\infty}\norm{v^k}<\infty$.
Furthermore, since we have assumed that $\lim_{k\to\infty}\alpha(u^k)=\alpha^*>0$, we also get that  $\limsup_{k\to\infty}J(v^k)<\infty$.
Hence, by \cref{ass:compact} a subsequence of $(v^k)$ converges to some $v^*\in\H$.
Using lower semi-continuity of $J$ and the strong convergences of $(u^k)$ and $(v^k)$ it holds
\begin{align*}
    \frac{1}{2}\norm{v^*-u^*}^2+\alpha^*J(v^*)&\leq\liminf_{k\to\infty}\frac{1}{2}\norm{v^k-u^k}^2+\alpha(u^k)J(u^k)\\
    &\leq\liminf_{k\to\infty}\frac{1}{2}\norm{v-u^k}+\alpha(u^k)J(v)
    =\frac{1}{2}\norm{v-u^*}^2+\alpha^*J(v),\quad\forall v\in\H.
\end{align*}
This shows that $v^*=\prox_{\alpha^*}^J(u^*)$.
The same argument shows that in fact every convergent subsequence of $(v^k)$ converges to $v^*$ and hence the whole sequence $(v^k)$ converges to $v^*$.
\end{proof}

In the previous lemma we have seen that the choice of regularization parameters $\alpha(u)$ cannot be arbitrary but must be such that along a convergent sequence $(u^k)$ also $\alpha(u^k)$ converges to some positive value.
In the following, we thus fix two possible parameter choices which have this property and are feasible in practical applications.
\revision{%
\begin{definition}[Parameter rules]\label{def:parameter_rule}
Let $u^0\in\H$ with $\norm{u^0}=1$ and $0<J(u^0)<\infty$, and let $c\in(0,1)$ be a constant.
We say that $\alpha(u)$ follows a \emph{constant parameter rule} if $\alpha(u)=\frac{c}{J(u^0)}$, and a \emph{variable parameter rule} if $\alpha(u) \leq \frac{c}{J(u)}$.

In applications, one would like to choose $c$ large but smaller than one, e.g., $c=0.9$.
\end{definition}}
\revision{
\begin{example}[Rayleigh quotient minimizing flows \cite{feld2019rayleigh}]\label{ex:raleigh_feld}
In \cite{feld2019rayleigh} (see also \cite{benning2017learning}) a flow to minimize the Rayleigh quotient $R(u)=J(u)/H(u)$ was studied. Its time discretization with time step size $\tau>0$ is
\begin{align*}
    \begin{cases}
    v^k = \argmin \frac{1}{2\tau}\norm{u-u^k}^2 - R(u^k)\langle q^k,u\rangle + J(u),\quad q^k\in\partial J(u^k),\\
    u^{k+1} = \frac{v^k}{\norm{v^k}}.
    \end{cases}
\end{align*}
For the case that $H(u)=\norm{u}$ equals the Hilbert norm one observes
\begin{align*}
    &\phantom{=}\;\;\argmin \frac{1}{2\tau}\norm{u-u^k}^2 - R(u^k)\langle q^k,u\rangle + J(u) \\
    &= \argmin \frac{1}{2\tau}\norm{u-u^k}^2 - J(u^k)\langle u^k,u\rangle + J(u) \\
    &= \argmin \frac{1}{2\tau}\|u-\underbrace{(1+\tau J(u^k))}_{=:c^k}u^k\|^2 + J(u) \\
    &= \prox_{\tau}^J(c^ku^k) \\
    &= c^k \prox_{\tau/c^k}^J(u^k).
\end{align*}
Hence, the method in \cite{feld2019rayleigh} is a special case of the proximal power method \eqref{eq:proximal_power_it} with step size $\alpha(u)=\frac{\tau}{1+\tau J(u)}$.
For $\tau\to\infty$ this converges from below to the maximal upper bound $\frac{1}{J(u)}$ of the variable parameter rule in \cref{def:parameter_rule}.
Hence, for a finite time step $0<\tau<\infty$ the Rayleigh quotient minimizing flows can be expected to have slower convergence properties compared to the proximal power method with rule $\alpha(u)=c/J(u)$, which is also reflected in our numerical comparison in \Cref{Sec::proximal_operators}.
\end{example}
}

\revision{%
The following lemma provides a rate of convergence of the angle between $u^k$ and $v^k=\prox_{\alpha(u^k)}^J(u^k)$ as $k\to\infty$. 
This is a crucial ingredient for proving convergence of the proximal power method.
Note that for the proof of the Lemma we assume that $(v^k)$ is uniformly bounded which will turn out to be satisfied for the proximal power method \eqref{eq:proximal_power_it}.
}
\begin{lemma}[Rate of asymptotic collinearity]\label{lem:collinearity}
Let $(v^k)$ and $(u^k)$ be given by the proximal power method \eqref{eq:proximal_power_it} and assume that $\norm{v^k}$ is uniformly bounded.
Under \cref{ass:poincare} it holds
\revise{%
\begin{align}\label{eq:collinearity}
    \norm{v^k}-\langle v^k,u^k\rangle=o\left(\frac{1}{k}\right),\quad k\to\infty.
\end{align}
}
\end{lemma}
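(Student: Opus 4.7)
The core idea is to exploit the optimality of $v^k$ in the proximal problem by inserting the rescaled competitor $w = \|v^k\|\,u^k$, which has the same norm as $v^k$ and therefore isolates the angular discrepancy we want to control. First, I would write the optimality inequality
\begin{align*}
\tfrac{1}{2}\norm{v^k - u^k}^2 + \alpha(u^k) J(v^k) \;\leq\; \tfrac{1}{2}\norm{w - u^k}^2 + \alpha(u^k) J(w),
\end{align*}
which is exactly the same device that was used in the proof of \cref{prop:decrease_rayleigh}. Using $\norm{u^k}=1$ and the absolute one-homogeneity $J(w) = \norm{v^k}\,J(u^k)$, expanding both squared norms, and canceling the common $\tfrac12\norm{v^k}^2 + \tfrac12$ yields the clean one-line bound
\begin{align*}
\norm{v^k} - \langle v^k, u^k\rangle \;\leq\; \alpha(u^k)\bigl(\norm{v^k}\,J(u^k) - J(v^k)\bigr).
\end{align*}

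Next, the key observation is that the normalization step identifies $J(u^{k+1}) = J(v^k)/\norm{v^k}$, so the right-hand side becomes $\alpha(u^k)\,\norm{v^k}\bigl(J(u^k) - J(u^{k+1})\bigr)$, a manifestly non-negative quantity (by \cref{prop:decrease_rayleigh}) that telescopes when summed. Using the assumed uniform bound on $\norm{v^k}$ together with the bound $\alpha(u^k)\leq c/J(u^k)$ from the parameter rules of \cref{def:parameter_rule}, the prefactor $\alpha(u^k)\,\norm{v^k}$ is dominated by a constant $M$. Summing and telescoping then gives
\begin{align*}
\sum_{k=0}^{N}\bigl(\norm{v^k} - \langle v^k, u^k\rangle\bigr) \;\leq\; M\bigl(J(u^0) - J(u^{N+1})\bigr) \;\leq\; M\,J(u^0),
\end{align*}
which, letting $N\to\infty$, yields summability of the non-negative sequence $a_k := \norm{v^k} - \langle v^k, u^k\rangle \geq 0$ (non-negativity being Cauchy–Schwarz plus $\norm{u^k}=1$).

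The remaining step — upgrading from $\sum_k a_k < \infty$ to the stronger pointwise rate $a_k = o(1/k)$ — is the step I expect to be the main obstacle, since summability alone is not enough in general. The standard route is to exhibit a monotonicity of the tail: once one shows that $(a_k)$ is (at least eventually) non-increasing, the elementary estimate $k\,a_k \leq 2\sum_{j\geq \lceil k/2\rceil} a_j \to 0$ closes the proof. A promising way to pin down such monotonicity is to rewrite $a_k = \langle v^k, u^{k+1} - u^k\rangle$ (using $\langle v^k, u^{k+1}\rangle = \norm{v^k}$) and to exploit Lipschitz/non-expansiveness properties of $\prox_{\alpha(\cdot)}^J$ across successive iterates, thereby transferring the decrease of $J(u^k)$ into a decrease of $a_k$. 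The rest of the argument (the telescoping bound) is robust and is the backbone of the proof.
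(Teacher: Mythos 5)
Your derivation is essentially identical to the paper's: the same competitor $w=\norm{v^k}u^k$ in the optimality inequality, the same one-line bound $\norm{v^k}-\langle v^k,u^k\rangle\leq\alpha(u^k)\norm{v^k}\bigl(J(u^k)-J(u^{k+1})\bigr)$, the same uniform bound on the prefactor via \cref{ass:poincare} and the parameter rule, and the same telescoping to summability. The one place you diverge is also the one place you are more careful than the paper: the published proof simply asserts that non-negativity plus summability yields $o(1/k)$, whereas you correctly observe that this implication is false for general sequences (it only gives $\liminf_k k\,a_k=0$) and would require an additional monotonicity argument that neither you nor the paper actually supplies; so the gap you flag is real, but it is a gap in the paper's own proof as well, not a defect specific to your approach.
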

\begin{proof}
To show \cref{eq:collinearity}, we note that by definition of $v^k=\prox_{\alpha(u^k)}^J(u^k)$ it holds
\begin{align*}
    \frac{1}{2}\norm{v^k-u^k}^2+\alpha(u^k)J(v^k)\leq\frac{1}{2}\norm{v-u^k}^2+\alpha(u^k)J(v),\quad\forall v\in\H.
\end{align*}
Choosing $v=\norm{v^k}u^k$, expanding the squared norms, and using $\norm{u^k}=1$ yields
\begin{align*}
    \frac{1}{2}\norm{v^k}^2-\langle v^k,u^k\rangle+\frac{1}{2}+\alpha(u^k)J(v^k)\leq\frac{1}{2}\norm{v^k}^2-\norm{v^k}+\frac{1}{2}+\alpha(u^k)\norm{v^k}J(u^k).
\end{align*}
This can be simplified to
\begin{align*}
    \norm{v^k}-\langle v^k,u^k\rangle\leq \alpha(u^k)\norm{v^k}\left(J(u^k)-\frac{1}{\norm{v^k}}J(v^k)\right).
\end{align*}
First we note $\alpha(u^k)\norm{v^k}$ is uniformly bounded by some $C>0$. 
This is due to the fact that $\norm{v^k}$ is bounded by assumption and that we have the estimate
$$\alpha(u^k)\leq\frac{c}{J(u^k)},$$
where $J(u^k)$ is uniformly bounded away from zero thanks to \cref{ass:poincare}.
Using that by definition $u^{k+1}=v^k/\norm{v^k}$, we obtain
\begin{align*}
    \norm{v^k}-\langle v^k,u^k\rangle\leq C\left(J(u^k)-J(u^{k+1})\right).
\end{align*}
\revise{%
We obtain by telescopic summation:
\begin{align*}
    \sum_{k=0}^\infty\norm{v^k}-\langle v^k, u^k\rangle \leq C\left(J(u^0)-J(u^*)\right).
\end{align*}
Hence, the sequence on the left hand side is non-negative and summable, which finally yields  
\begin{align*}
    \norm{v^k}-\langle v^k,u^k\rangle=o\left(\frac{1}{k}\right),\quad k\to\infty.
\end{align*}
}
\end{proof}

Now we are ready to prove convergence of the proximal power method using constant or variable regularization parameters.

\begin{thm}[Convergence of the proximal power method]\label{thm:convergence_proximal}
Let $(u^k)\subset\H$ be the sequence generated by the proximal power method \cref{eq:proximal_power_it} with \emph{constant} or \emph{variable parameter rule} and initial datum $f\in\calN(J)^\perp$.
Under \cref{ass:poincare} and \cref{ass:compact} a subsequence of $(u^k)$ converges to some $u^*\in\H\setminus\{0\}$ which meets $\lambda u^*=\prox_{\alpha(u^*)}^J(u^*)$ for some $0<\lambda<1$.
\end{thm}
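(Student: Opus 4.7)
The plan is to combine the compactness provided by \cref{ass:compact}, the asymptotic collinearity in \cref{lem:collinearity}, and the continuity of the proximal operator in \cref{lem:convergences} to extract a convergent subsequence whose limit is an eigenvector of $\partial J$, and then translate this to an eigenvector equation for $\prox_{\alpha(u^*)}^J$.

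\textbf{Step 1: extract a subsequential limit.} By construction $\norm{u^k}=1$ for all $k$, and \cref{cor:decrease_J} gives $J(u^k)\leq J(u^0)<\infty$. Hence the sequence lies in a fixed sub-level set of $\norm{\cdot}+J(\cdot)$, which is compact by \cref{ass:compact}. A subsequence (not relabeled) satisfies $u^k\to u^*$ strongly in $\H$, and strong convergence of the unit vectors forces $\norm{u^*}=1$, so $u^*\neq 0$. I also need $\norm{v^k}$ uniformly bounded to apply \cref{lem:collinearity}: this follows from plugging $v=0$ into the defining inequality of $v^k=\prox_{\alpha(u^k)}^J(u^k)$, which yields $\norm{v^k-u^k}\leq\norm{u^k}=1$ and hence $\norm{v^k}\leq 2$.

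\textbf{Step 2: control the regularization parameters.} For the constant parameter rule this is trivial: $\alpha(u^k)\equiv c/J(u^0)=:\alpha^*>0$. For the variable rule $\alpha(u)\leq c/J(u)$, \cref{cor:decrease_J} implies that $J(u^k)$ is non-increasing, and by \cref{ass:poincare} it stays bounded below by $c_P>0$. Consequently $J(u^k)\searrow J^\infty\geq c_P$, and after a further extraction $\alpha(u^k)\to\alpha^*\in(0,c/J^\infty]\subset(0,\infty)$.

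\textbf{Step 3: identify the limit as a proximal fixed direction.} With $\alpha(u^k)\to\alpha^*>0$, \cref{lem:convergences} gives $v^k\to v^*:=\prox_{\alpha^*}^J(u^*)$. Passing to the limit in the inequality $\norm{v^k}-\langle v^k,u^k\rangle=o(1/k)$ from \cref{lem:collinearity} yields $\norm{v^*}=\langle v^*,u^*\rangle$, and since $\norm{u^*}=1$ the Cauchy--Schwarz equality case gives $v^*=\lambda u^*$ with $\lambda=\norm{v^*}\geq 0$. To rule out $\lambda=0$ I invoke \cref{prop:extinction}: in both parameter rules $\alpha(u^*)\leq c/J(u^*)<1/J(u^*)\leq J_*(u^*)$, so $\prox_{\alpha^*}^J(u^*)\neq 0$ and $\lambda>0$.

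\textbf{Step 4: show $u^*$ is an eigenvector of $\partial J$ and conclude.} The optimality condition for $v^*=\lambda u^*$ reads $\frac{u^*-\lambda u^*}{\alpha^*}=\frac{1-\lambda}{\alpha^*}u^*\in\partial J(\lambda u^*)=\partial J(u^*)$, where I used the absolute one-homogeneity of $J$. Thus $u^*$ is an eigenvector of $\partial J$, and by \cref{prop:subdifferential} the eigenvalue must equal $J(u^*)/\norm{u^*}^2=J(u^*)>0$, forcing $\lambda=1-\alpha^*J(u^*)$. Since $u^*$ is an eigenvector of $\partial J$, it is an eigenvector of $\prox_\alpha^J$ for every admissible $\alpha$; specifically $\prox_{\alpha(u^*)}^J(u^*)=(1-\alpha(u^*)J(u^*))u^*$. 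The parameter rule guarantees $0<\alpha(u^*)J(u^*)<1$, which gives the desired eigenvalue in $(0,1)$.

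The main obstacle is \textbf{Step 2}, ensuring that $\alpha(u^k)$ does not degenerate in the limit: positivity of $J(u^k)$ through the Poincar\'e inequality is essential, and monotonicity of $J(u^k)$ from \cref{cor:decrease_J} is what lets one pass to the limit without needing continuity of $\alpha$ as a function on $\H$. Once the parameters are under control, the rest of the argument is essentially a direct application of the preceding lemmas.
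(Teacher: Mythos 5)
Your proof is correct and follows essentially the same route as the paper's: compactness plus the energy decrease yield a convergent subsequence, control of $\alpha(u^k)$ via the Poincar\'e inequality lets you invoke the continuity and collinearity lemmas to identify the limit as a proximal eigenvector, and the subdifferential characterization transfers the eigenvalue relation from $\alpha^*$ to $\alpha(u^*)$. The only (harmless) deviations are your explicit verification that $\norm{v^k}\leq 2$, which the paper leaves implicit, and your direct identification $\lambda=1-\alpha^*J(u^*)$ in place of the paper's contradiction argument showing $\tilde{\lambda}<1$.
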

\begin{proof}
Since by assumption the sublevel sets of $\norm{\cdot}+J(\cdot)$ are compact and according to \cref{cor:decrease_J} sequence $(u^k)$ fulfills
$$\sup_{k\in\N}\norm{u^k}+J(u^k)\leq 1+J(u^0)<\infty,$$
it admits a convergent subsequence (which we do not relabel).
We denote by $u^*$ its limit and note that it fulfills 
\begin{align*}
    \norm{u^*}=1,\qquad
    J(u^*)\leq\liminf_{k\to\infty}J(u^k),
\end{align*}
by lower semi-continuity of $J$.
For the constant parameter rule it is trivial that $\alpha(u^k)$ converges to some positive value.
Let us therefore study the variable parameter rule $\alpha(u^k):=c/J(u^k)$ with $0<c<1$. 
Then according to \cref{cor:well-defined} it holds that $\alpha(u^k)$ is an increasing sequence which is bounded by $\alpha(u^k)\leq\frac{c}{c_P}$, where $c_P$ denotes the Poincar\'{e}-type constant from \cref{ass:poincare}. 
Hence, $\lim_{k\to\infty}\alpha(u^k)=\alpha^*$ exists and by lower semi-continuity of $J$ one has
\begin{align}\label{ineq:estimate_alpha*}
    \alpha^*=\lim_{k\to\infty}\alpha(u^k)=\frac{c}{\liminf_{k\to\infty}J(u^k)}\leq\frac{c}{J(u^*)}.
\end{align}

Applying \cref{lem:convergences} gives that $v^k:=\prox_{\alpha(u^k)}^J(u^k)$ converges to some $v^*\in\H$ and it holds $v^*=\prox_{\alpha^*}^J(u^*)$.
Note that \cref{ineq:estimate_alpha*} together with \cref{prop:extinction} implies that $v^*\neq 0$.

It remains to be shown that $v^*$ and $u^*$ are collinear since this implies that $u^*$ is an eigenvector.
To this end, we note that by \cref{lem:collinearity} it holds
\begin{align*}
    \lim_{k\to\infty}\norm{v^k}-\langle v^k,u^k\rangle=0.
\end{align*}
Using $v^k\to v^*$ and $u^k\to u^*$ we get $\norm{v^*}=\langle v^*,u^*\rangle$ which readily implies $u^*={v^*}/\norm{v^*}$.
Hence, we infer
$$\prox_{\alpha^*}^J(u^*)=v^*=\norm{v^*}u^*=\tilde{\lambda} u^*$$
with $\tilde{\lambda}:=\norm{v^*}>0$.
Note that $\tilde{\lambda}\leq 1$ since the proximal operator has unitary Lipschitz constant and thus meets
$$\tilde{\lambda}\norm{u^*}=\norm{\prox_{\alpha^*}^J(u^*)-\prox_{\alpha^*}^J(0)}\leq\norm{u^*}.$$
Indeed, it even holds $\tilde{\lambda}<1$ since otherwise the optimality condition
\begin{align}\label{eq:OC}
    \frac{1-\tilde{\lambda}}{\alpha^*}u^*\in\partial J(u^*)
\end{align}
of problem \cref{eq:prox} would reduce to $0\in\partial J(u^*)$ which means $J(u^*)=0$.
Due to $\norm{u^*}=1$ and \cref{ass:poincare} this is impossible.
It remains to be shown that it also holds $\lambda u^*=\prox_{\alpha(u^*)}^J(u^*)$ for a suitable $\lambda\in(0,1)$.
For the constant parameter rule this is trivially true with $\lambda=\tilde{\lambda}$.
In general, we rewrite \cref{eq:OC} as $\mu u^*\in\partial J(u^*)$ where $\mu:=(1-\tilde{\lambda})/\alpha^*$.
Then, $\prox_{\alpha(u^*)}^J(u^*)$ can be explicitly computed (see~\cite{benning2013ground}, for instance):
$$\prox_{\alpha(u^*)}^J(u^*)=(1-\alpha(u^*)\mu)_+u^*=\lambda u^*$$
with $\lambda:=(1-\alpha(u^*)\mu)_+\leq\tilde{\lambda}<1$.
Since $\alpha(u^*)=c/J(u^*)<J_*(u^*)$, \cref{prop:extinction} tells us that $\lambda>0$.
\end{proof}

\begin{rem}[Uniqueness and positivity]
For practical applications it is of great importance to understand to which eigenvector the proximal power method~\cref{eq:proximal_power_it} converges. 
While for the linear power method \cref{alg:lin} it is well-known that it converges to the largest eigenvector of $L$ if the initial datum $f$ ``contains'' the direction of this eigenvector, analogous statements for the nonlinear case are unknown, in general.
Since nonlinear eigenvectors are in general neither orthogonal nor do they form a basis \cite{benning2013ground}, the linear techniques do not apply. 
This also applies to all the methods which were referred to in \Cref{Sec::intro}.
However, there exists one scenario were one can actually say something about the limit of the power method, namely when positivity is involved.
If $f\geq 0$, where the partial order $\geq$ makes $\H$ a Banach lattice, then one can show that $u^k\geq 0$ for all $k\in\N$. 
Consequently, the limit $u^*$ is a non-negative eigenvector which can often  be characterized as the unique eigenvector of the proximal operator with the largest possible eigenvalue (see for instance \cite{kawohl2006positive,bungert2020structural}).
Analogous statements for rescaled gradient flows were shown in \cite{bungert2019asymptotic}.
Furthermore, in \cite{boyd1974power} similar statements were proven for nonlinear power iterations related to the computation of matrix norms (see also \cite{gautier2020computing}).
\end{rem}

\revision{%
\begin{rem}[Moreau Decomposition and Duality]
A fundamental property of proximal operators is the Moreau decomposition, which states that
\begin{align}\label{eq:moreau}
    u = \alpha \prox^{J^*}_{1/{\alpha}}\left(\frac{u}{\alpha}\right) + \prox^J_{\alpha}(u),\quad\forall u\in\H,
\end{align}
where $J^*(p)=\sup_{u\in\H}\langle p,u\rangle - J(u)$ is the Fenchel conjugate of $J$.
In particular, this means that $u$ satisfies $\lambda u = \prox_\alpha^J(u)$, if and only if $v:=u/\alpha$ satisfies $\mu v=\prox_{{1}/{\alpha}}^{J^*}(v)$ with $\mu=1-\lambda$.
Hence, the eigenvalue problems of $\prox_\alpha^J$ and $\prox_{{1}/{\alpha}}^{J^*}$ are equivalent.
If $J$ is absolutely one-homogeneous, its Fenchel conjugate equals the characteristic function of the set $K:=\partial J(0)=\left\lbrace p\in\H\st\langle p,v\rangle\leq J(v),\;\forall v\in\H\right\rbrace$ and its proximal operator reduces to a projection:
\begin{align}
    \prox_{1/\alpha}^{J^*}(v) = \proj_{K}(v).
\end{align}
Since large eigenvalues of this projection operator correspond to small eigenvalues of $\prox_\alpha^J$ and hence to large eigenvalues of the subdifferential operator $\partial J$, \Cref{Sec::def} already indicates that applying a power method to $\proj_{K}$ is not meaningful.
Indeed, if one considers the iteration
\begin{align}
    \begin{cases}
    v^{k} = \proj_{\beta K}(u^k), \\
    u^{k+1} = \frac{v^k}{\norm{v^k}},
    \end{cases}
\end{align}
where $\beta>0$ denotes some radius, it is obvious that $u^k=u^0$ for all $k\in\N$ whenever $B_1:=\lbrace u\in\H\st\norm{u}\leq 1\rbrace\subset\beta K$.
Hence, one needs to choose $\beta$ such that $\beta K\subset B_1$, which is equivalent to $\beta \norm{u}\leq J_*(u)$ for all $u\in\calN(J)^\perp$.
Here $J_*$ denotes the dual semi norm \eqref{eq:dual_norm}.
Together with the Poincar\'{e}-type inequality from \cref{ass:poincare} this would mean that $J_*$, $\norm{\cdot}$, and $J$, are equivalent norms on $\calN(J)^\perp$ which is excluded in almost all interesting applications.
\end{rem}}



\subsection{Networks: A Toy Example}
\label{Sec::toy_example}
Here we study an extremely simple ``2-pixel'' network with $\relu$ activation function. It is shown that nonlinearity plays a crucial role in such networks, where the operating range is critical. Unlike the one-homogeneous case of previous section, applying \cref{alg:simple} directly results in reaching only degenerate solutions.
This motivates us in the next section to develop a range-aware algorithm, which fits better to nonlinear operators, working only within an expected range.

For the linear eigenproblem \cref{eq:EV_lin} it is clear that, by linearity, every multiple of $u$ is an eigenvector with eigenvalue $\lambda$, as well.
For nonlinear maps, such as neural nets,  this is typically not the case, as the following example shows.
Let us consider the map $T:\R^2\to\R^2$, given by a simple one-layer $\relu$ network
$$T(u)=\relu\left[
\begin{pmatrix}
-1 & 0\\
0 & 1
\end{pmatrix}
\begin{pmatrix}
u_1\\
u_2
\end{pmatrix}
+\begin{pmatrix}
1\\
-1
\end{pmatrix}
\right]=
\begin{pmatrix}
\max(1-u_1,0)\\
\max(u_2-1,0)
\end{pmatrix}.
$$
After some calculations one easily sees that the set of eigenvectors of $T$ consists of the four different sets with respective eigenvalues, given by
\begin{alignat*}{2}
    S_1&=\{(\alpha,0)\st 0<\alpha< 1\}, \quad&&\lambda_1=\frac{1-\alpha}{\alpha},\\
    S_2&=\left\lbrace\left(\frac{1}{1+\alpha},\frac{1}{1-\alpha}\right)\st 0<\alpha<1\right\rbrace,\quad&&\lambda_2=\alpha,\\
    S_3&=\{(\alpha,0)\st \alpha< 0\},\quad&&\lambda_3=\frac{1-\alpha}{\alpha},\\
    S_4&=\{(u_1,u_2)\st u_1\geq 1,\;u_2\leq 1\},\quad&&\lambda_4=0.
\end{alignat*}

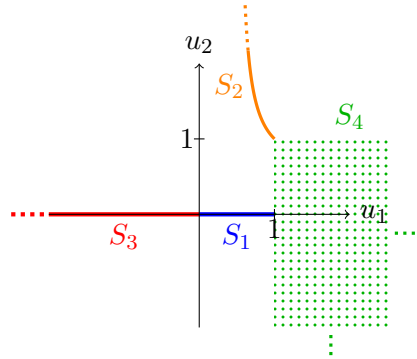
\begin{wrapfigure}{r}{0.45\textwidth}
    \centering
        \begin{tikzpicture}
        \draw[scale=1,domain=0:1,smooth,variable=\x,blue,ultra thick] plot ({\x},{0*\x}) ;
        \draw[scale=1,domain=-2:0,smooth,variable=\x,red,ultra thick]  plot ({\x},{0*\x}) ;
        \draw[scale=1,domain=-2.5:-2,smooth,variable=\x,red,ultra thick,dotted]  plot ({\x},{0*\x}) ;
        \draw[scale=1,domain=0.61:0.65,smooth,variable=\x,orange,very thick,dotted]  plot ({\x},{\x/(2*\x-1)}) ;
        \draw[scale=1,domain=0.65:1,smooth,variable=\x,orange,very thick]  plot ({\x},{\x/(2*\x-1)}) ;
        \fill [pattern=dots, pattern color=darkgreen] (1,1) rectangle (2.5,-1.5);
        \draw[darkgreen, very thick, dotted] (2.6,-0.25)--(2.9,-0.25);
        \draw[darkgreen, very thick, dotted] (1.75,-1.6)--(1.75,-1.9);
        \node[anchor=north,blue] at (.5,0) {$S_1$};
        \node[anchor=north,orange] at (.4,2) {$S_2$};
        \node[anchor=north,red] at (-1,0) {$S_3$};     
        \node[anchor=south,darkgreen] at (2,1) {$S_4$};  
        \draw[->] (-2,0) -- (2,0) node[right] {$u_1$};
        \draw[->] (0,-1.5) -- (0,2) node[above] {$u_2$};
        \draw(1,-2pt)--(1,2pt) node[below] {$1$};
        \draw(-2pt,1)--(2pt,1) node[left] {$1$};
        \end{tikzpicture}
    \vspace{-10pt}
    \caption{All eigenvectors of $T$. The dotted green region is the kernel.\label{fig:all_eigenvectors}}
\end{wrapfigure}
\cref{fig:all_eigenvectors} shows the sets of eigenvectors. So does the simple \cref{alg:simple} manage to find eigenvectors of $T$? 
Since by definition $T$ only returns non-negative vectors, one can never compute the negative eigenvectors in $S_3$.
Furthermore, due to normalization, one can only find eigenvectors with unit norm. 
The only non-negative eigenvector with unit norm is $(1,0)$, which lies in $S_4$, the set of eigenvectors with eigenvalue zero, i.e., the kernel of~$T$.
Indeed, one can check that for all initializations $u^0\notin S_4=\ker(T)$, \cref{alg:simple} converges to $(1,0)\in S_4$ in a finite number of steps. However, the more interesting eigenvectors in $S_1$, $S_2$ or $S_3$ cannot be reached, since the simple \cref{alg:simple} ignores the ``natural range'' where $T$ operates.

\section{Proposed Method for Range-Sensitive Operators}\label{Sec::adapted}
In the linear case, any eigenvector multiplied by a constant is an eigenvector with the same eigenvalue. In fact, for any homogeneous operator of degree $p\in [0,\infty)$ (i.e., $T(cu)=|c|^pu$), if $u$ is an eigenvector with eigenvalue $\lambda$, so is $cu$, $c\in \R$ with eigenvalue $\lambda |c|^{p-1}$. Thus, eigenvectors are not restricted to a certain range of values.
For more general nonlinear operators, however, the range of the vector is important. The operator and its respective eigenvectors may be range-sensitive. 
We shall now see how typical properties of images and denoisers imply that eigenvectors, as defined until now, exist only for unit eigenvalues. Thus, we later give a more relaxed definition. We discuss the finite-dimensional case $\H=\R^n$ since we aim at designing a power method for imaging purposes.
Hence, we view the denoising network as operator $T:U\to U$, where $U\subset\R^n$ meets \cref{eq:scaling_U}, and state two \revision{observations which apply to many denoising networks.} 
\begin{observation}[Pixels are in a specific value range]
Pixel values are in a specific predefined range, typically non-negative, and usually either $[0,1]$ or $[0,255]$. A nonlinear operator $T$ stemming from a neural network is usually trained on images within a predefined range. Thus at inference unexpected results may be produced, if the input image is not in the expected range. 
\end{observation}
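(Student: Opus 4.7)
The Observation combines two claims: a conventional statement that pixel values live in a fixed non-negative range such as $[0,1]$ or $[0,255]$, which holds by the definition of standard image formats and requires no proof, and a claim about network behavior---that a neural net trained in such a range may misbehave outside it---which is the part that deserves justification. My plan is to argue this second claim by an underdetermination argument about the training procedure.

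First I would formalize ``trained on $U$'' by taking the network $T_\theta$ to result from the minimization of an empirical loss
\begin{align*}
    \theta^\star \in \argmin_\theta \frac{1}{N}\sum_{i=1}^N \norm{T_\theta(u_i+\eta_i)-u_i}^2,
\end{align*}
where the clean samples $u_i$ and the noisy inputs $u_i+\eta_i$ all lie in a neighborhood of $U$. Two parameter configurations whose associated networks coincide on that neighborhood have the same empirical loss, so the training procedure constrains $T_\theta$ only on a neighborhood of $U$ and leaves its extrapolation to $\R^n\setminus U$ free.

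I would then promote ``unconstrained'' to ``may produce unexpected results'' by a patching construction. For multi-layer nets of the form \cref{eq:deep_net} with $\relu$ activations, which are Lipschitz and sufficiently expressive, one can explicitly construct two valid trained nets that agree on the training neighborhood but assign very different values to a given input $u\notin U$. A concrete instance of this phenomenon is already visible in the toy example of \Cref{Sec::toy_example}: the only normalized non-negative eigenvector of $T$ that \cref{alg:simple} can reach is $(1,0)\in S_4=\ker(T)$, sitting on the boundary of the natural range, while the more meaningful eigenvectors in $S_1$ and $S_2$ cannot be obtained because the iteration is agnostic to the range on which $T$ was designed to operate.

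The main obstacle is that ``unexpected'' is not a mathematical predicate; I would make it precise by a quantitative condition such as $T(u)\notin U$ or $\norm{T(u)-u^{\mathrm{clean}}}\geq\varepsilon$ for a hypothetical clean target, after which the underdetermination argument above delivers the claim. As it stands, however, the Observation functions primarily as motivation for the range-aware algorithm developed in \Cref{Sec::adapted}, so my ``proof'' is best understood as a justification sketch rather than as a derivation of a formal theorem.
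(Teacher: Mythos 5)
The paper offers no proof of this Observation at all: it is stated as an empirical, motivational remark about how images are represented and how denoising networks are trained, and its only role is to justify the range-aware relaxation \cref{Eq:new_eigenproblem} and \cref{alg:nonhom_power_it}. You correctly recognize this, and your closing caveat — that the statement is not a mathematical predicate and your argument is a justification sketch — is exactly the right reading. Your underdetermination argument (the empirical loss only constrains $T_\theta$ on a neighborhood of the training range, so its extrapolation to $\R^n\setminus U$ is essentially free, and for expressive $\relu$ architectures one can patch two nets that agree on the data but diverge elsewhere) is a sensible and more substantive justification than anything the paper provides; it buys a concrete mechanism for ``unexpected results'' where the paper simply asserts them. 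One small mismatch: the toy example of \Cref{Sec::toy_example} is not really an instance of training-range underdetermination — that network is fixed and explicit, and the failure there comes from the non-homogeneity of $T$ interacting with the unit-norm normalization of \cref{alg:simple}, i.e.\ from the \emph{algorithm} ignoring the operator's natural range rather than from the operator being unconstrained off its training set. It illustrates the same moral (range matters), but through a different mechanism, so citing it as ``a concrete instance of this phenomenon'' slightly conflates the two sources of range-sensitivity that the paper keeps separate.
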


\begin{observation}[Denoisers are unbiased] \label{ass:unbiased}
Common denoisers are designed for additive white noise (either Gaussian or uniform) of expected value zero. Thus it is common that a denoiser $T$ preserves the mean image value, which can be expressed as $\langle T(u),1 \rangle = \langle u,1 \rangle$.
\end{observation}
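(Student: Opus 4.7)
Strictly speaking this is a modeling observation rather than a theorem to be proved in the classical sense, so the plan is to give a short derivation from the noise model together with a structural equivariance argument that reduces the claim to verifiable conditions on the denoiser.

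First, I would set up the noise model formally: write $u = u^{\ast} + n$ where $u^{\ast}$ is the clean image and $n$ is additive white noise with $\mathbb{E}[n_i] = 0$ componentwise. By linearity of the inner product, $\mathbb{E}\langle u, \mathbf{1}\rangle = \langle u^{\ast}, \mathbf{1}\rangle$, so the noisy and clean mean coincide in expectation. The \emph{ideal} MMSE denoiser is then the conditional expectation $T^\ast(u) = \mathbb{E}[u^{\ast}\mid u]$, and the tower property immediately gives $\mathbb{E}\langle T^\ast(u), \mathbf{1}\rangle = \langle u^\ast, \mathbf{1}\rangle$, i.e.\ mean-preservation in expectation. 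This is the statistical content of the observation.

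Second, to upgrade the expectation-level identity to the deterministic identity $\langle T(u), \mathbf{1}\rangle = \langle u, \mathbf{1}\rangle$ valid for every input $u$, I would introduce the structural condition that $T$ is equivariant under constant shifts, $T(u + c\mathbf{1}) = T(u) + c\mathbf{1}$ for every $c\in\R$. Granting this, the map $u \mapsto \langle u - T(u),\mathbf{1}\rangle$ is invariant under constant shifts of $u$, and combining with the statistical argument (or simply with the design requirement that $T$ should reproduce $u^\ast$ on constant inputs) forces $\langle T(u),\mathbf{1}\rangle = \langle u,\mathbf{1}\rangle$. The plan would then be to verify the equivariance case by case: for linear shift-invariant filters it reduces to the filter having unit response on constants; for a proximal denoiser $\prox_\alpha^J$ whose regularizer satisfies $J(u + c\mathbf{1}) = J(u)$ (as is the case for total variation and the other one-homogeneous functionals considered in \Cref{Sec::proximal_operators_analysis}) it follows by a change of variables in the proximal problem and the characterization in \cref{eq:shift_invariance_prox}; for a CNN it can be imposed by the choice of bias and padding.

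The main obstacle is that this equivariance is architecture-dependent and does \emph{not} follow from the Lipschitz continuity which was the sole assumption used so far. For a generic trained network it must be assumed, designed in, or verified empirically, and the observation is therefore best read as delineating the subclass of denoisers on which the range-sensitive power method of \Cref{Sec::adapted} is intended to act — rather than as a universal theorem about the operator $T$.
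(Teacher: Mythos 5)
The paper offers no proof of \cref{ass:unbiased} at all: it is labelled an \emph{observation}, introduced with the phrase that it is one of ``two observations which apply to many denoising networks,'' and its entire justification is the single informal sentence about zero-mean additive noise. Your first paragraph is exactly that sentence made precise (noise model, linearity of $\langle\cdot,\mathbf{1}\rangle$, MMSE denoiser and the tower property), so on that level you match the paper's intent; everything after that is material the paper does not attempt, and your closing reading --- that the observation delineates the subclass of denoisers on which the range-sensitive method of \Cref{Sec::adapted} is meant to operate, rather than asserting a universal property of Lipschitz $T$ --- is precisely how the paper uses it (it feeds directly into the subsequent proposition forcing $\lambda=1$, which motivates the relaxed eigenproblem \cref{Eq:new_eigenproblem}).

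One small logical gap in your ``upgrade'' step is worth flagging. Shift equivariance $T(u+c\mathbf{1})=T(u)+c\mathbf{1}$ makes $u\mapsto\langle u-T(u),\mathbf{1}\rangle$ shift-invariant, but that only reduces the deterministic claim to the case of zero-mean inputs; it does not close it. The design requirement that $T$ reproduce constants gives $\langle u-T(u),\mathbf{1}\rangle=0$ only for constant $u$, not for a general zero-mean $u$. What actually closes the argument in the proximal case is the stronger fact, contained in \cref{eq:shift_invariance_prox} together with the statement that $\prox_\alpha^J$ maps $\calN(J)^\perp$ into itself: the denoiser sends zero-mean inputs to zero-mean outputs. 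For a generic trained CNN even this must be assumed or verified empirically, as you say. So your case-by-case verification plan is the right one, but the sentence ``combining with \dots{} forces $\langle T(u),\mathbf{1}\rangle=\langle u,\mathbf{1}\rangle$'' should be replaced by the explicit hypothesis that $T$ preserves the zero-mean subspace. Since the paper asserts nothing beyond the expectation-level heuristic, this does not put you in conflict with it --- it only means your deterministic strengthening needs one more named assumption.
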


The second observation restricts possible eigenvalues of the denoiser to one, \revision{if one assumes non-negative inputs}:
\begin{prop}
For any vector $u\neq 0$ with non-negative entries and a denoiser $T$ admitting \cref{ass:unbiased} above, if $u$ meets $\lambda u = T(u)$, then $\lambda=1$.
\end{prop}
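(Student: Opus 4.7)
The plan is to combine the eigenvalue equation $T(u)=\lambda u$ with the mean-preserving property $\langle T(u),1\rangle=\langle u,1\rangle$ from \cref{ass:unbiased}, and then use the non-negativity of $u$ to conclude that the inner product $\langle u,1\rangle$ is strictly positive, which forces $\lambda=1$.

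More concretely, first I would take the inner product of both sides of $\lambda u=T(u)$ with the all-ones vector~$1$. By linearity of the inner product this yields $\lambda\langle u,1\rangle=\langle T(u),1\rangle$. Invoking \cref{ass:unbiased}, the right-hand side equals $\langle u,1\rangle$, so we obtain the identity
\begin{equation*}
    (\lambda-1)\langle u,1\rangle=0.
\end{equation*}

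Next I would argue that $\langle u,1\rangle\neq 0$. Since by assumption all entries $u_i$ of $u$ are non-negative, we have $\langle u,1\rangle=\sum_{i=1}^n u_i\geq 0$, with equality only if every $u_i=0$. The hypothesis $u\neq 0$ excludes this, so $\langle u,1\rangle>0$. Dividing by this strictly positive quantity in the identity above immediately yields $\lambda=1$.

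There is no real obstacle here; the argument is a direct algebraic consequence of the two ingredients, and the only point worth emphasizing is the role of non-negativity, which rules out the pathological case $\langle u,1\rangle=0$ that could otherwise occur for sign-changing $u$ (in which case any $\lambda$ would trivially satisfy the identity and the conclusion would fail).
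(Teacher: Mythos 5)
Your argument is correct and coincides with the paper's own proof: both take the inner product of the eigenvalue relation with the all-ones vector, invoke the mean-preservation property, and use non-negativity together with $u\neq 0$ to conclude $\langle u,1\rangle>0$ and hence $\lambda=1$. Nothing is missing.
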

\begin{proof}
If $u$ is an eigenvector with eigenvalue $\lambda$ it holds $\langle u,1\rangle=\langle T(u),1\rangle=\lambda\langle u,1\rangle$ due to \cref{ass:unbiased}.
Since $u$ has non-negative values (and is not identically zero) we know $\langle u,1 \rangle >0$. 
Hence, we can cancel $\langle u,1\rangle$ and obtain $\lambda=1$.
\end{proof}
Another issue is the invariance to a constant shift in illumination. We expect the behavior of $T$ to be invariant to a small global shift in image values. That is, $T(u+c) = T(u) + c$, for any $c \in \mathbb{R}$, such that $(u+c) \in U$.
\revision{This even applies to the proximal operators (analyzed in \Cref{Sec::proximal_operators_analysis}) associated to functionals, the null-space of which consists of constant functions, such as the total variation. 
In this case the eigenvalue problem $\lambda u = \prox_{\alpha(u)}^J(u)$ has only the trivial non-negative solution $u\equiv const$ with $\lambda=1$ and all other eigenfunctions have positive and negative entries. 
However, thanks to their null-space invariance \eqref{eq:shift_invariance_prox} the proximal eigenvalue problem can be relaxed to
\begin{align}
    \lambda(u-\overline{u}) = \prox_{\alpha}^J(u)-\overline{\prox_{\alpha}^J(u)}
\end{align}
which also allows for positive solutions.
Hence, we also relax the general basic} eigenproblem \cref{eq:EV} as follows:
\begin{align}\label{Eq:new_eigenproblem}
  \lambda ({u-\overline{u}}) = {T(u)-\overline{T(u)}},
\end{align}  
where $\lambda\in\mathbb{R}$, $\bar u=\langle 1,u \rangle /|\Omega|$ is the mean value of $u$ over the image domain $\Omega$. 
Note that now (relaxed) eigenvectors, admitting  \cref{Eq:new_eigenproblem}, can have any eigenvalue. 
In addition, if $u$ is an eigenvector, so is $u+c$ for $c\in\R$, as expected for operators with invariance to global value shifts.

Associated to this adapted eigenproblem, we define a suitable Rayleigh quotient as
\begin{align}
\label{eq:Rayleigh_new}
    R^\dagger(u)=\frac{\langle u-\overline{u},T(u)-\overline{T(u)}\rangle}{\norm{u-\overline{u}}^2}
\end{align}
which still has the property that $\lambda=R^\dagger(u)$ whenever $u$ fulfills the eigenvalue problem \cref{Eq:new_eigenproblem}.

\begin{rem}
A similar relaxation of the eigenvalue problem can be done for general Hilbert spaces if one replaces the mean $\overline{u}$ by the projection onto a closed subspace and $u-\overline{u}$ by the projection onto the respective orthogonal complement.
By choosing different subspaces (e.g., constant functions, affine functions, etc.) one can create different eigenvalue problems.
\end{rem}

\begin{algorithm}[h]
\caption{Nonlinear Power Method for Non-Homogeneous Operators.}
\label{alg:nonhom_power_it}
~~ \\
\textbf{Input:} $f\in \R^n$, $\varepsilon\in \R^+$.
\begin{enumerate}
    \item Initialize: $u^0 \gets f$, $\,\,k \gets 1.$ 
    \item Repeat until $\|u^{k+1}-u^k\|< \varepsilon$:\\ 
    $u^{k+1} \gets T(u^k)$\\
    $u^{k+1} \gets u^{k+1}-\overline{u^{k+1}}$\\
    $u^{k+1} \gets \frac{ u^{k+1}}{\| u^{k+1}\|}\|u^0-\overline{u^0} \|$\\
    $u^{k+1} \gets u^{k+1} + \overline{u^k}, \,\,\,k \gets k+1.$
\end{enumerate}
\textbf{Output:} $(u^*, \lambda^*)$, where $u^*=u^k$, $\lambda^*=R^\dagger(u^*)$, with $R^\dagger$ defined in \cref{eq:Rayleigh_new}.
\end{algorithm}

The modified power method is detailed in \cref{alg:nonhom_power_it}, aiming at computing a relaxed eigenvector \cref{Eq:new_eigenproblem} by explicitly handling the mean value and keeping the norm of the initial condition.
We found this adaptation to perform well on denoising networks.
It was shown that the eigenvectors obtained were of high eigenvalues ($\lambda \approx 1$), yielding highly smooth and stable structures. As proposed in \cite{HAITFRAENKEL2021103041}, one could also use a complementary operator $T^c := I-T$, where $I$ is the identity. In this case the proposed algorithm converged to unstable modes of the denoiser.


\section{Numerical results}
\revise{%
In this section we present numerical results for applying the power method to nonlinear operators of increasing complexity. We start with proximal operators in \Cref{Sec::proximal_operators} and proceed with shallow denoising networks in \Cref{Sec::shallow_networks}. 
In both of these scenarios we use the simple power method from \cref{alg:simple}.
This is justified by the range-independence of proximal operators and the shallow nets we use.
Finally, in \Cref{Sec::approx_eig} we apply the more general \cref{alg:nonhom_power_it} to state-of-the-art deep denoising networks in order to compute their eigenvectors in the sense of~\cref{Eq:new_eigenproblem}.
}
\revision{The code used used for our numerical experiments is available on \texttt{github}.\footnote{\url{https://github.com/leon-bungert/Eigenvectors-of-Proximal-Operators-and-Neural-Networks}}}

\subsection{\revise{Eigenvectors of Proximal Operators}}
\label{Sec::proximal_operators}
In this section we report numerical results for the proximal power method~\cref{eq:proximal_power_it}, which was analyzed in \Cref{Sec::proximal_operators_analysis}.
\revision{%
In our first experiment we compute an eigenfunction of the proximal operator of the anisotropic total variation functional
\begin{align*}
    \tv_\mathrm{aniso}(u):=\sup\left\lbrace
    \int_\Omega u\div\phi \dx\st \phi\in C_c^\infty(\Omega,\R^n),\;\sup_{x\in\Omega}|\phi(x)|_\infty\leq 1\right\rbrace,
\end{align*}
where $|\cdot|_\infty$ denotes the $\infty$-norm of a vector.
We compare our proximal power method \eqref{eq:proximal_power_it} with different parameter rules for $c=0.9$
\begin{alignat*}{2}
    \alpha(u) &= \frac{c\tau}{1+\tau J(u)},\quad&&\text{Feld $\tau$}, \\
    \alpha(u) &= \frac{c}{J(u^0)},\quad &&\text{constant}, \\
    \alpha(u) &= \frac{c}{J(u)},\quad &&\text{variable},
\end{alignat*}
with the flow from Nossek and Gilboa~\cite{nossek2018flows}, for which we choose the largest time step which guarantees stability.
Note that, as explained in \cref{ex:raleigh_feld}, using the parameter rule \mbox{Feld $\tau$} with time step size $\tau>0$ our proximal power method reduces to the Rayleigh quotient minimizing flow studied by Feld et al. in \cite{feld2019rayleigh}.
For all methods we perform 30 iterations.

\begin{wrapfigure}{r}{0.35\textwidth}
    \centering
    \includegraphics[width=0.17\textwidth,trim=.3cm .3cm .3cm .3cm, clip]{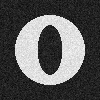}%
    \hfill%
    \includegraphics[width=0.17\textwidth,trim=.3cm .3cm .3cm .3cm, clip]{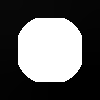}
    \caption{Initialization and converged eigenvector of the anisotropic total variation.}
    \label{fig:init_conv}
\end{wrapfigure}

The proximal operator of $\tv_\mathrm{aniso}$ cannot be evaluated explicitly and we approximate it using a primal-dual algorithm \cite{ChambollePock_PrimalDual}.
For parameters $\alpha(u)\approx J_*(u)$ close to the dual norm, the proximal operator is close to zero and particularly hard to evaluate accurately. 
Therefore, we employ a proximal backtracking strategy and evaluate the proximal operator with sufficient precision to ensure the energy decrease from \cref{cor:decrease_J}. 
If the energy has not decreased, we multiply the step size $\alpha(u)$ with $0.9$ and evaluate the proximal again. 
Alternatively, one could simply stop the iteration if the energy does not decrease any more.

All methods were initialized with a noisy image of a disk with a hole and we expect convergence to the anisotropic Cheeger set of the disk \cite{caselles2008characterization} which is an eigenfunction of the anisotropic total variation (cf.~\cref{fig:init_conv}).
Note that the curvature of the converged eigenvector is an artefact of the finite difference discretization (see, e.g., \cite{chambolle2021approximating}) and the displayed image is an eigenvector with very high numerical precision (cf.~\cref{fig:angles_affinities}).

In \cref{fig:angles_affinities} we plot the iterates' angles in degrees and eigenvector affinities (cf.~\cite{nossek2018flows,bungert2019computing}), which are defined as
\begin{align*}
    \text{Angle}(u) &:= \frac{360}{2\pi}\arccos\frac{\langle T(u),u\rangle}{\norm{T(u)}\norm{u}}, \\
    \text{Affinity}(u) &:= \frac{\norm{p}^2}{J(p)}\in[0,1],\quad p\in\partial J(u).
\end{align*}
For the proximal power methods we choose $T(u)=\prox_{\alpha(u)}^J(u)$ and for the Nossek and Gilboa flow $T(u)=\partial J(u)$.
The appropriate subgradient needed for the computation of the eigenvector affinity of the proximal power method is given by $p:=(u-T(u))/\alpha(u)$.

By \cref{lem:collinearity} the angle converges to zero for the proximal power method, whereas the eigenvector affinity is expected to converge to one.
\cref{fig:angles_affinities} makes clear that the proximal power method~\eqref{eq:proximal_power_it} with variable parameter rule converges fastest, followed by the Feld et al. method with large time step $\tau=1$, which approximates the variable parameter rule according to \cref{ex:raleigh_feld}.
These two methods converge to high precision eigenvectors after four and five iterations, respectively.
Feld et al.'s method with time step $\tau=0.1$ and the power method with constant parameter rule converge slower and Feld et al.'s method with small time step $\tau=0.01$ and the Nossek and Gilboa flow converge slowest, without reaching an eigenvector in 30 iterations.
This is due to the fact that the noisy initialization requires quite small time steps for stability whereas the power method with variable parameter rule can increase its parameter in every iteration.

Interestingly, the iterates of the ``slower methods'' exhibit smaller angles than the ``faster'' ones.
Similarly, the eigenvector affinities, which all lie above $99\%$ are slightly higher for the slow methods.
The method Feld $0.1$, which needs more than ten iterations to converge, achieves the highest affinity of $\approx 99.9\%$, whereas the variable parameter rule, which converges in only 4 iterations, reaches an affinity of $\approx 99.5\%$.
We attribute these observations to the fact that the larger the parameter of the proximal operator, the harder it is to evaluate it  precisely which can lead to slightly larger angles and lower affinities. 

The iterates of all six methods are visualized in \cref{fig:comparison_images}, from which it is obvious that the proposed method (and the special case of Feld et al. \cite{feld2019rayleigh} with large $\tau$) converges much faster than the flow from Nossek and Gilboa~\cite{nossek2018flows}.}
\begin{figure}[h!]
    \centering
    \def\PicWidth{0.49\textwidth}
    \includegraphics[width=\PicWidth,trim=.5cm .4cm .4cm .4cm,clip]{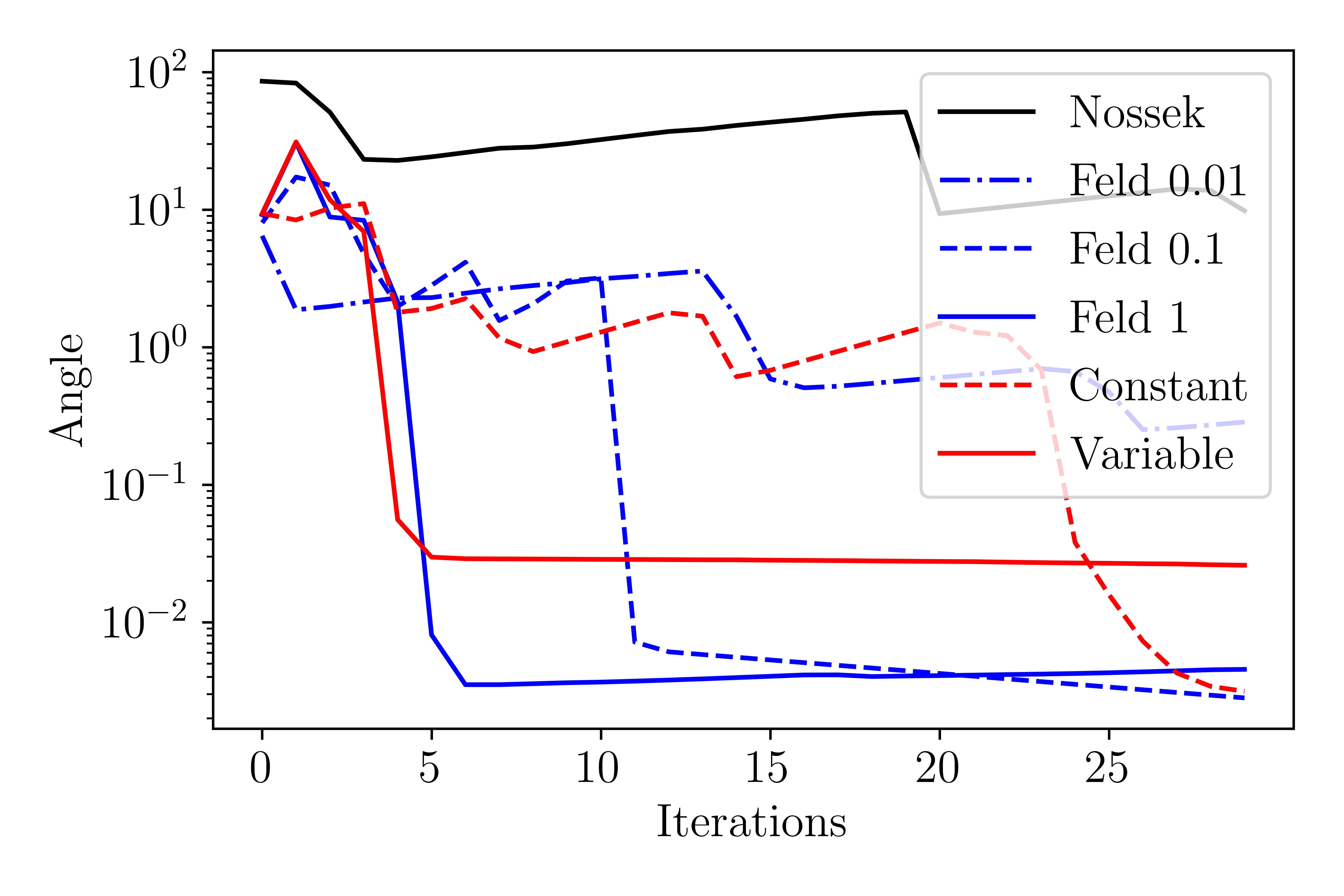}%
    \hfill%
    \includegraphics[width=\PicWidth,trim=.5cm .4cm .4cm .4cm,clip]{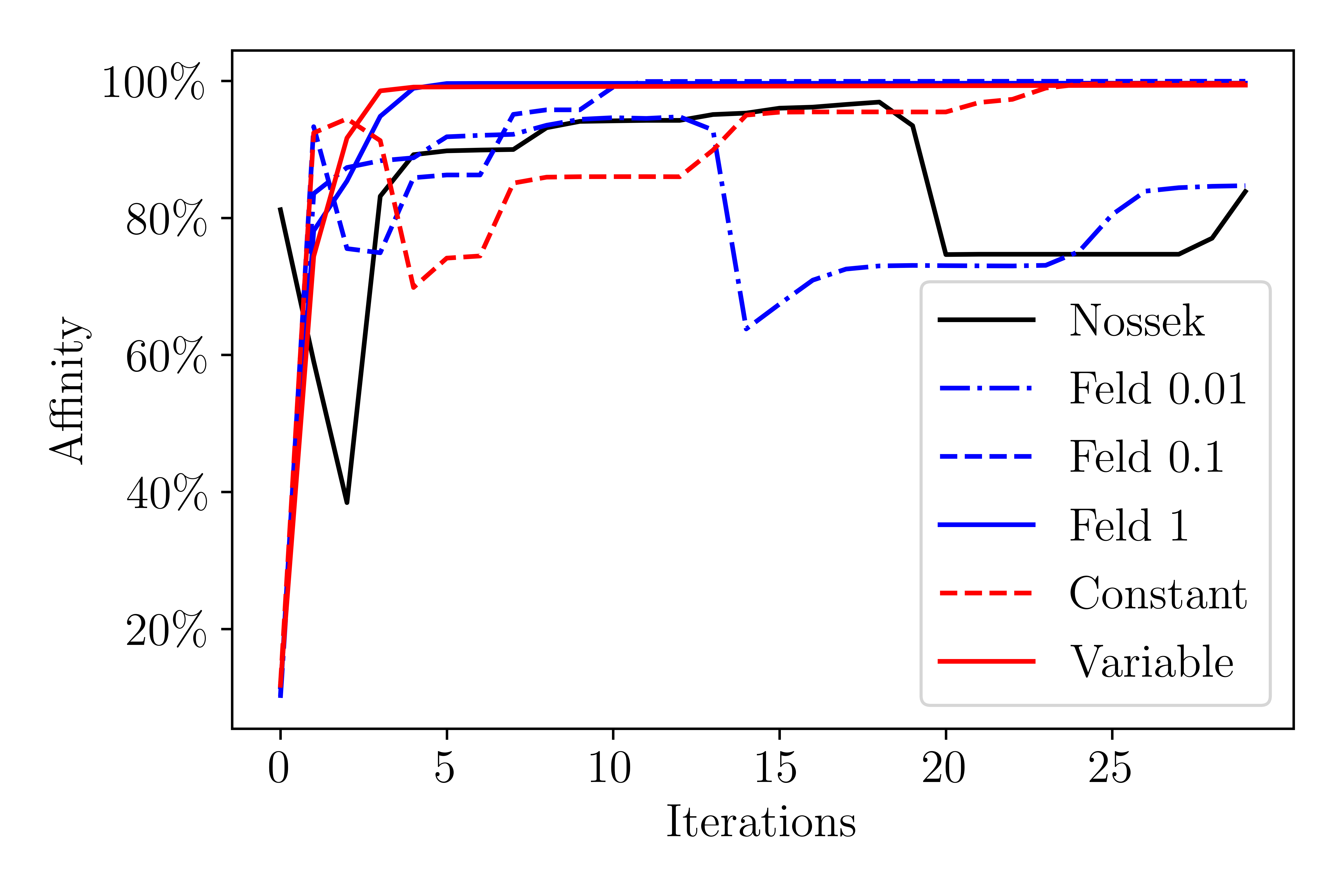}
    \caption{Angles (\textbf{left}) and eigenvector affinities (\textbf{right}) for different methods (\textbf{red}: proposed). The proximal power method \eqref{eq:proximal_power_it} with variable parameter rule including the Feld et al. method \cite{feld2019rayleigh} for large time steps converge fastest.}
    \label{fig:angles_affinities}
\end{figure}
\begin{figure}[h!]
    \setlength{\tabcolsep}{2pt}
    \gdef\Width{0.17\textwidth}
    \centering
    \begin{tabular}{cccccc}
    \rotatebox{90}{\hspace{.2cm}\begin{tabular}[c]{@{}c@{}}Nossek~\cite{nossek2018flows}\\maximal $\tau$\end{tabular}} 
    &\includegraphics[width=\Width,trim=.3cm .3cm .3cm .3cm, clip]{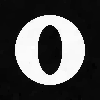}%
    &\includegraphics[width=\Width,trim=.3cm .3cm .3cm .3cm, clip]{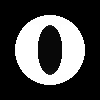}%
    &\includegraphics[width=\Width,trim=.3cm .3cm .3cm .3cm, clip]{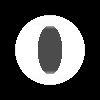}%
    &\includegraphics[width=\Width,trim=.3cm .3cm .3cm .3cm, clip]{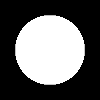}%
    &\includegraphics[width=\Width,trim=.3cm .3cm .3cm .3cm, clip]{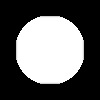}\\%
    \rotatebox{90}{\hspace{.5cm}\begin{tabular}[c]{@{}c@{}}Feld~\cite{feld2019rayleigh}\\$\tau=0.01$\end{tabular}} 
    &\includegraphics[width=\Width,trim=.3cm .3cm .3cm .3cm, clip]{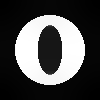}%
    &\includegraphics[width=\Width,trim=.3cm .3cm .3cm .3cm, clip]{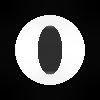}%
    &\includegraphics[width=\Width,trim=.3cm .3cm .3cm .3cm, clip]{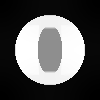}%
    &\includegraphics[width=\Width,trim=.3cm .3cm .3cm .3cm, clip]{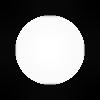}%
    &\includegraphics[width=\Width,trim=.3cm .3cm .3cm .3cm, clip]{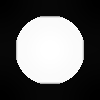}\\%
    \rotatebox{90}{\hspace{.5cm}\begin{tabular}[c]{@{}c@{}}Feld~\cite{feld2019rayleigh}\\$\tau=0.1$\end{tabular}} 
    &\includegraphics[width=\Width,trim=.3cm .3cm .3cm .3cm, clip]{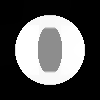}%
    &\includegraphics[width=\Width,trim=.3cm .3cm .3cm .3cm, clip]{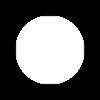}%
    &\includegraphics[width=\Width,trim=.3cm .3cm .3cm .3cm, clip]{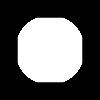}%
    &\includegraphics[width=\Width,trim=.3cm .3cm .3cm .3cm, clip]{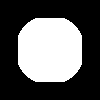}%
    &\includegraphics[width=\Width,trim=.3cm .3cm .3cm .3cm, clip]{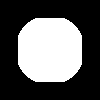}\\%
    \rotatebox{90}{\hspace{.5cm}\begin{tabular}[c]{@{}c@{}}Feld~\cite{feld2019rayleigh}\\$\tau=1$\end{tabular}} 
    &\includegraphics[width=\Width,trim=.3cm .3cm .3cm .3cm, clip]{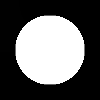}%
    &\includegraphics[width=\Width,trim=.3cm .3cm .3cm .3cm, clip]{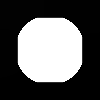}%
    &\includegraphics[width=\Width,trim=.3cm .3cm .3cm .3cm, clip]{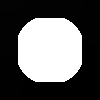}%
    &\includegraphics[width=\Width,trim=.3cm .3cm .3cm .3cm, clip]{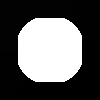}%
    &\includegraphics[width=\Width,trim=.3cm .3cm .3cm .3cm, clip]{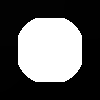}
    \\%
    \rotatebox{90}{\hspace{.2cm}\begin{tabular}[c]{@{}c@{}}Ours~\eqref{eq:proximal_power_it}\\constant rule\end{tabular}} 
    &\includegraphics[width=\Width,trim=.3cm .3cm .3cm .3cm, clip]{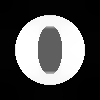}%
    &\includegraphics[width=\Width,trim=.3cm .3cm .3cm .3cm, clip]{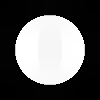}%
    &\includegraphics[width=\Width,trim=.3cm .3cm .3cm .3cm, clip]{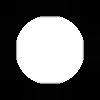}%
    &\includegraphics[width=\Width,trim=.3cm .3cm .3cm .3cm, clip]{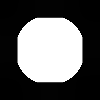}%
    &\includegraphics[width=\Width,trim=.3cm .3cm .3cm .3cm, clip]{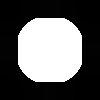}
    \\%
    \rotatebox{90}{\hspace{.2cm}\begin{tabular}[c]{@{}c@{}}Ours~\eqref{eq:proximal_power_it}\\variable rule\end{tabular}} 
    &\includegraphics[width=\Width,trim=.3cm .3cm .3cm .3cm, clip]{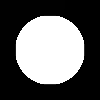}%
    &\includegraphics[width=\Width,trim=.3cm .3cm .3cm .3cm, clip]{figs/proximal_operators/variable/it_4}%
    &\includegraphics[width=\Width,trim=.3cm .3cm .3cm .3cm, clip]{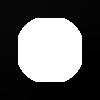}%
    &\includegraphics[width=\Width,trim=.3cm .3cm .3cm .3cm, clip]{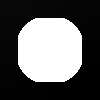}%
    &\includegraphics[width=\Width,trim=.3cm .3cm .3cm .3cm, clip]{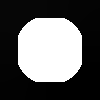}%
    \end{tabular}
    \caption{Iterates 2, 4, 10, 20, and 30 of different methods to compute eigenvectors. \textbf{Top to bottom:} Nossek and Gilboa~\cite{nossek2018flows} with maximal time step, Feld et al. \cite{feld2019rayleigh} with time step $0.01$, $0.1$, and $1$, proximal power method \eqref{eq:proximal_power_it} with constant and variable parameter rule.}
    \label{fig:comparison_images}
\end{figure}
While these results were computed on a finite difference grid, it is also possible to compute nonlinear eigenfunctions on nonlocal weighted graphs.
The three images in \cref{Fig::proximal} correspond to eigenfunctions related to the absolutely one-homogeneous functionals
\begin{align}
    J_p(u) = 
    \begin{cases}
    \left(\sum_{x\in V}\sum_{y\sim x}|\nabla_w u(x,y)|^p\right)^\frac{1}{p},\quad &p\in[1,\infty), \\
    \max_{x\in V}\max_{y\sim x}|\nabla_w u(x,y)|,\quad &p=\infty,
    \end{cases}
\end{align}
which are defined on a weighted graph $G=(V,E,w)$.
Here $V$ denotes the set of vertices, $E\subset V\times V$ is the set of edges, and $w:E\to\R_{\geq 0}$ is a symmetric weight function.
For $x\sim y$, meaning $(x,y)\in E$, one defines the gradient of a function $u:V\to\R$ as
\begin{align*}
    \nabla_w u(x,y) =\sqrt{w(x,y)}(u(x)-u(y)).
\end{align*}
For $p=1$ the eigenfunctions of the proximal operator of $J_1$ can be used for the task of graph clustering (cf.~\cite{aujol2018theoretical,bungert2019computing}). 
The first image in \cref{Fig::proximal} shows the corresponding eigenfunction for the clustering of a graph consisting of two ``moons''.
Similarly, for $p=2$ one obtains the standard graph Laplacian eigenfunction (see the second image in \cref{Fig::proximal}).
Finally, one can also consider the functionals $J_p$ on the space of graph functions which are zero on some set $\Gamma\subset V$.
In this case, for $p=\infty$ positive eigenfunctions related to $J_\infty$ were characterized as distance function to the set $\Gamma$ in~\cite{bungert2020structural}.
The third image in \cref{Fig::proximal} shows the distance function to the boundary of a map of the United Kingdom, computed with the proximal power method.
\revision{All results in \cref{Fig::proximal} were computed using the variable parameter rule with $c=0.9$.}
\begin{figure}[h!]
    \centering
    \includegraphics[width=0.39\textwidth,angle=90,trim=4cm 4cm 4cm 3.5cm, clip, angle=90]{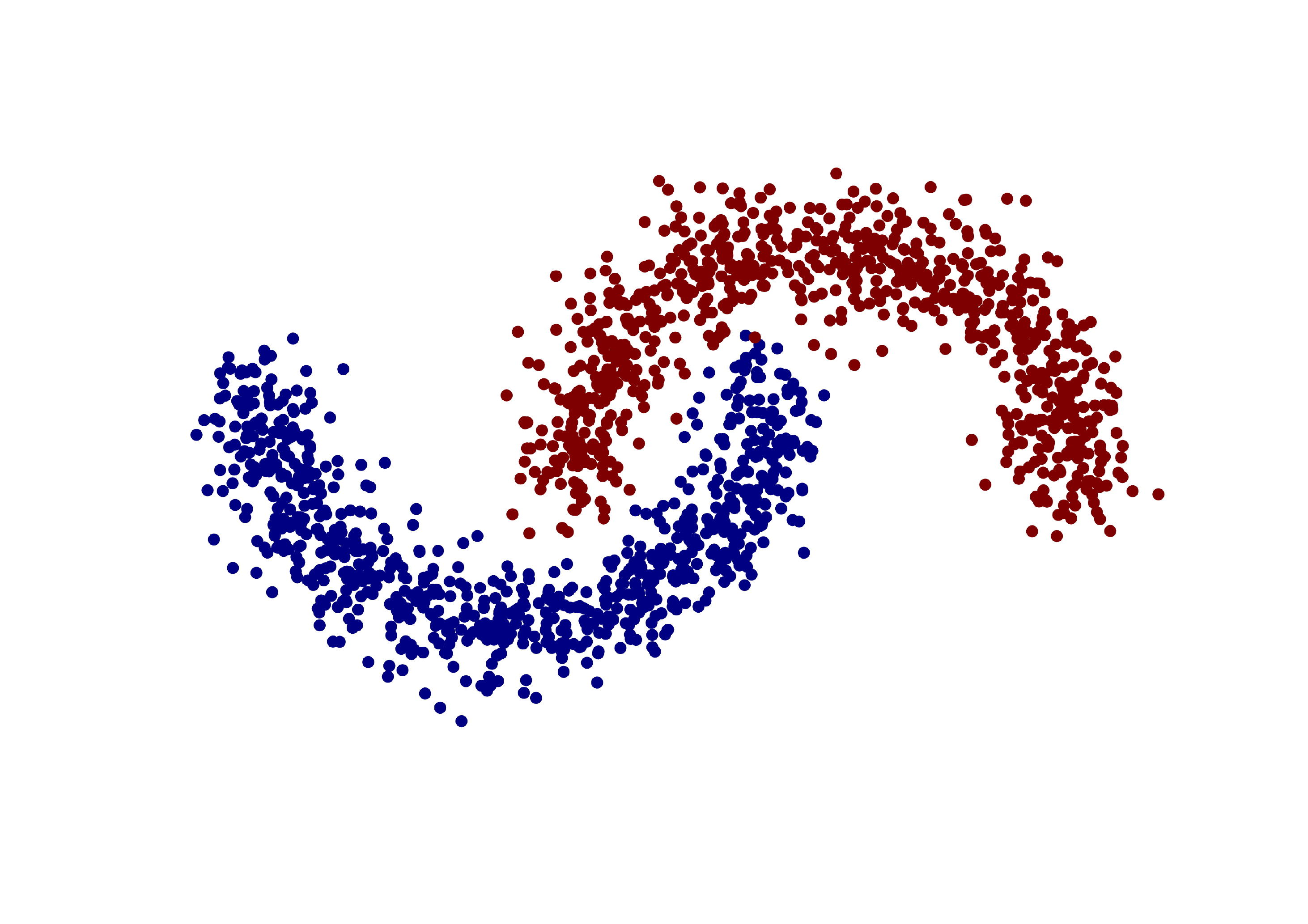}%
    \hfill%
    \includegraphics[width=0.39\textwidth,angle=90,trim=6.5cm 4cm 5.5cm 3.5cm , clip, angle=90]{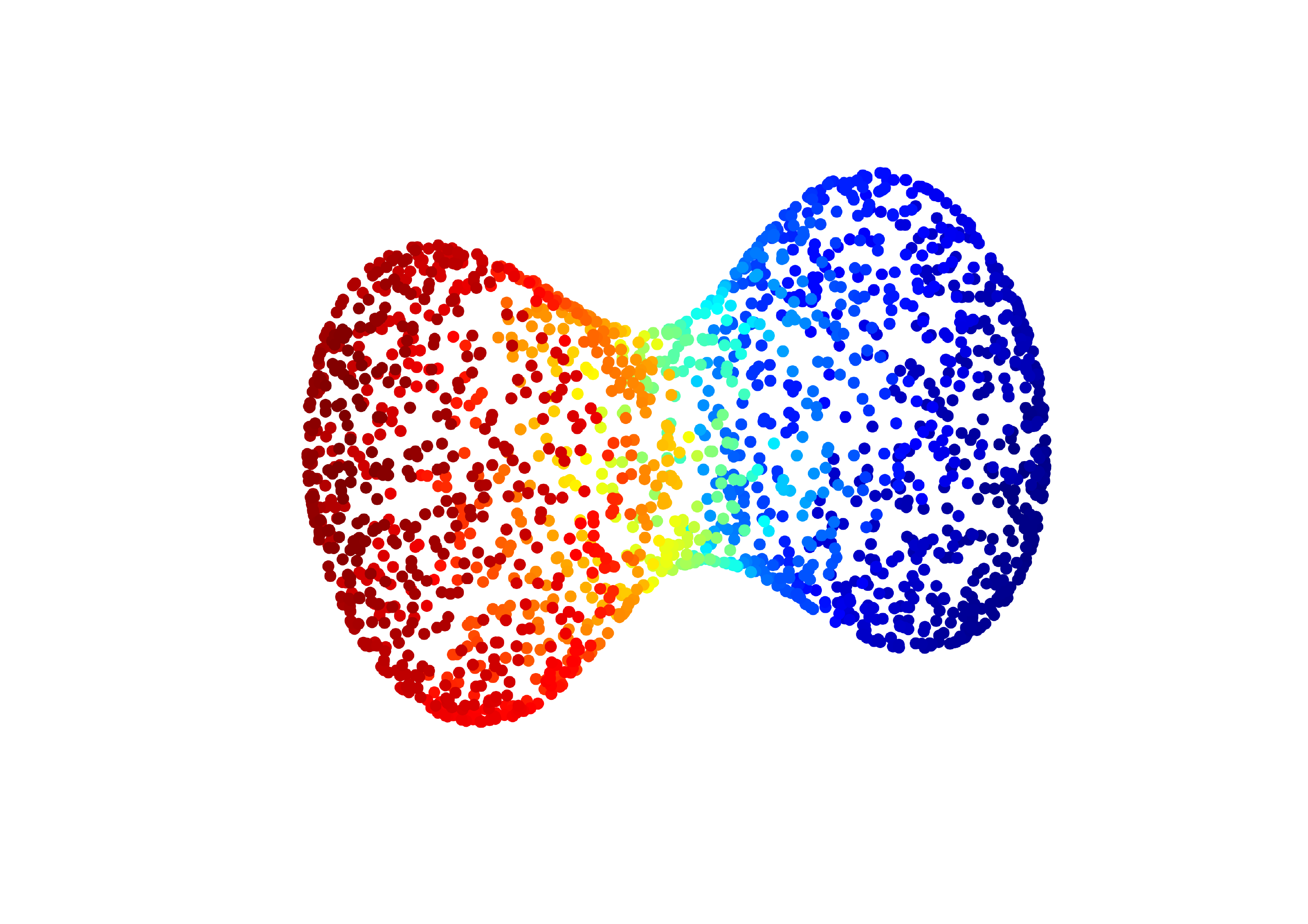}%
    \hfill%
    \includegraphics[width=0.2\textwidth, trim=3cm 3cm 2cm 2cm, clip]{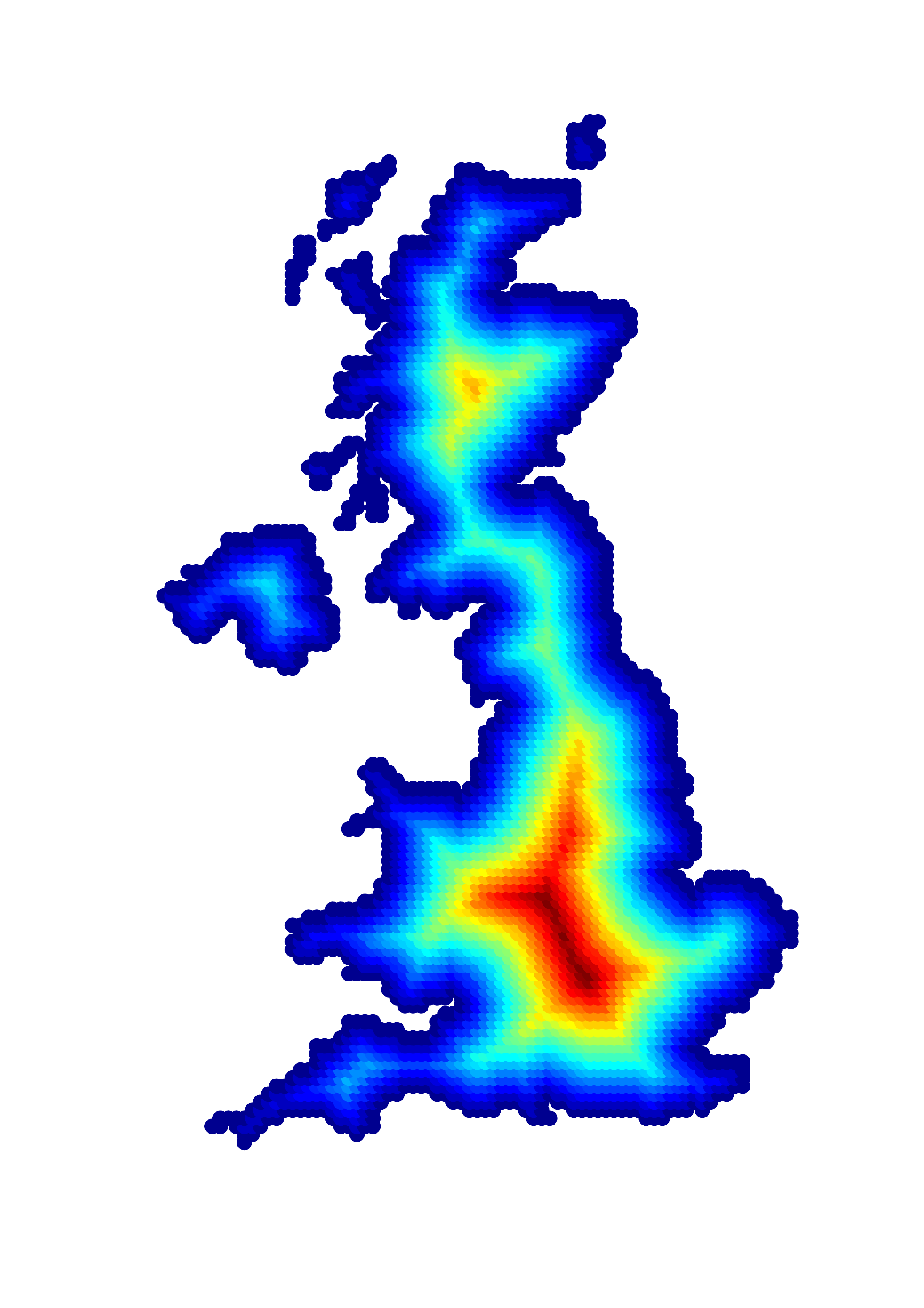}
    \caption{Eigenfunctions of different proximal operators:
    eigenfunction related to $J_1$, graph Laplacian eigenfunction related to $J_2$, distance function related to $J_\infty$}
    \label{Fig::proximal}
\end{figure}

\subsection{Eigenvectors of Shallow Denoising Networks}
\label{Sec::shallow_networks}
In this section, we present experimental results for the following basic denoising networks: 
\begin{align*}
    T(u)=
    \begin{cases}
    Au&\texttt{Linear},\\
    \sigma(Au)&\texttt{ReLU},\\
    \sigma(A_2\sigma(A_1u))&\texttt{AutoEncoder},\\
    \sigma(B\sigma(Cu))&\texttt{ConvNet},
    \end{cases}
    \quad u\in\R^n
\end{align*}
where $\sigma$ denotes $\relu$ activation and $n=28^2=784$ denotes image size.
The involved matrices in the \texttt{Linear}, \texttt{ReLU}, and \texttt{AutoEncoder} nets are $A\in\R^{n\times n}$, $A_1\in\R^{f\times n}$, and $A_2\in\R^{n\times f}$, where  \texttt{SmallAE} and \texttt{LargeAE} denote feature space sizes $f=25$ and $f=200$, respectively. 
The convolutional layer $C$ in \texttt{ConvNet} produces 16 kernels of size $6\times 4$, and the subsequent fully connected layer corresponds to a matrix $B\in\R^{n\times n}$.
The networks were trained on 5000 pairs of noisy and clean images from the MNIST database with values in $[0,1]$.

We use the simple \cref{alg:simple} to compute eigenvectors of the net. 
This seems reasonable, since all nets are positively homogeneous due to the lack of bias terms.
All eigenvectors computed fulfill $\norm{\lambda u-T(u)}<10^{-7}$ where $\lambda=\norm{T(u)}$ denotes the eigenvalue.
This means they fulfill the eigenvector relation with high numerical accuracy.
The eigenvectors and corresponding eigenvalues are shown in \cref{fig:mnist_eigenvectors}.
Furthermore, \cref{tab:rmse} shows the Root Mean Squared Errors (RMSE) of the trained networks on an unseen test set of MNIST digits.%
\begin{figure*}[h!]
\captionsetup[subfigure]{justification=centering}
\centering
\begin{subfigure}{0.155\textwidth}
\centering
    \includegraphics[width=0.9\textwidth]{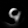}
    \caption{\texttt{Linear} $\lambda=0.9626$}
\end{subfigure}
\begin{subfigure}{0.155\textwidth}
\centering
    \includegraphics[width=0.9\textwidth]{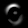}
    \caption{\texttt{Linear} $\lambda=0.9672$}
\end{subfigure}
\begin{subfigure}{0.155\textwidth}
\centering
    \includegraphics[width=0.9\textwidth]{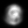}
    \caption{\texttt{Linear} $\lambda=0.9996$}
\end{subfigure}
\begin{subfigure}{0.155\textwidth}
\centering
    \includegraphics[width=0.9\textwidth]{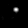}
    \caption{\texttt{ReLU} $\lambda=1.0443$}
\end{subfigure}
\begin{subfigure}{0.155\textwidth}
\centering
    \includegraphics[width=0.9\textwidth]{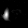}
    \caption{\texttt{ReLU} $\lambda=1.1808$}
\end{subfigure}
\begin{subfigure}{0.155\textwidth}
\centering
    \includegraphics[width=0.9\textwidth]{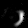}
    \caption{\texttt{ReLU} $\lambda=1.2165$}
\end{subfigure}
\begin{subfigure}{0.155\textwidth}
\centering
    \includegraphics[width=0.9\textwidth]{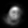}
    \caption{\texttt{SmallAE} $\lambda=0.9960$}
\end{subfigure}
\begin{subfigure}{0.155\textwidth}
\centering
    \includegraphics[width=0.9\textwidth]{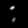}
    \caption{\texttt{LargeAE} $\lambda=1.0310$}
\end{subfigure}
\begin{subfigure}{0.155\textwidth}
\centering
    \includegraphics[width=0.9\textwidth]{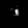}
    \caption{\texttt{LargeAE} $\lambda=1.0670$}
\end{subfigure}
\begin{subfigure}{0.155\textwidth}
\centering
    \includegraphics[width=0.9\textwidth]{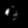}
    \caption{\texttt{LargeAE} $\lambda=1.0961$}
\end{subfigure}
\begin{subfigure}{0.155\textwidth}
\centering
    \includegraphics[width=0.9\textwidth]{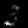}
    \caption{\texttt{LargeAE} $\lambda=1.1330$}
\end{subfigure}
\begin{subfigure}{0.155\textwidth}
\centering
    \includegraphics[width=0.9\textwidth]{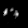}
    \caption{\texttt{LargeAE} $\lambda=1.1358$}
\end{subfigure}
\begin{subfigure}{0.155\textwidth}
\centering
    \includegraphics[width=0.9\textwidth]{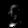}
    \caption{\texttt{LargeAE} $\lambda=1.1393$}
\end{subfigure}
\begin{subfigure}{0.155\textwidth}
\centering
    \includegraphics[width=0.9\textwidth]{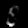}
    \caption{\texttt{LargeAE} $\lambda=1.1617$}
\end{subfigure}
\begin{subfigure}{0.155\textwidth}
\centering
    \includegraphics[width=0.9\textwidth]{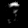}
    \caption{\texttt{LargeAE} $\lambda=1.1801$}
\end{subfigure}
\begin{subfigure}{0.155\textwidth}
\centering
    \includegraphics[width=0.9\textwidth]{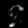}
    \caption{\texttt{LargeAE} $\lambda=1.1963$}
\end{subfigure}
\begin{subfigure}{0.155\textwidth}
\centering
    \includegraphics[width=0.9\textwidth]{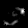}
    \caption{\texttt{LargeAE} $\lambda=1.2174$}
\end{subfigure}
\begin{subfigure}{0.155\textwidth}
\centering
    \includegraphics[width=0.9\textwidth]{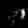}
    \caption{\texttt{LargeAE} $\lambda=1.2687$}
\end{subfigure}
\begin{subfigure}{0.155\textwidth}
\centering
    \includegraphics[width=0.9\textwidth]{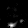}
    \caption{\texttt{ConvNet} $\lambda=1.3319$}
\end{subfigure}
\begin{subfigure}{0.155\textwidth}
\centering
    \includegraphics[width=0.9\textwidth]{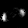}
    \caption{\texttt{ConvNet} $\lambda=1.4390$}
\end{subfigure}
\begin{subfigure}{0.155\textwidth}
\centering
    \includegraphics[width=0.9\textwidth]{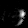}
    \caption{\texttt{ConvNet} $\lambda=1.4523$}
\end{subfigure}
\vspace{-15pt}
\caption{Eigenvectors and eigenvalues for the different shallow denoising networks\label{fig:mnist_eigenvectors}. 
\texttt{Linear} eigenvectors are computed with standard methods, all others using \cref{alg:simple} with different initializations.}
\vspace{-25pt}
\end{figure*}
\begin{table}[h!]
    \centering
    \begin{tabular}{c|c|c|c|c|c}
          Network & \texttt{Linear} & \texttt{ReLU} & \texttt{SmallAE} & \texttt{LargeAE} & \texttt{ConvNet}\\
          \hline
          RMSE & 0.097345 & 0.083013 & 0.13968 & 0.097402 & 0.083913 
    \end{tabular}
    \caption{Root Mean Squared Errors of the networks on an unseen test set of MNIST digits}
    \label{tab:rmse}
\end{table}

One should remark that the eigenvectors of the \texttt{Linear} net were computed with standard methods\footnote{We used the MATLAB routine \texttt{eig}.}, since they coincide with eigenvectors of the matrix $A$.
Indeed, in our experiments $A$ had several eigenvectors with real eigenvalues, but we only show the three with the largest eigenvalue. 
Similarly, for the other nets we show all eigenvectors that we were able to compute using different initializations of \cref{alg:simple}.
For each network, we initialized the power method with 36 different images, being zero apart from a single pixel with varying position.

While by this method most nets only exhibit one to three different eigenvectors, the \texttt{LargeAE} net shows eleven different eigenvectors.
Furthermore, some of the eigenvectors of the \texttt{Linear} and \texttt{LargeAE} nets show remarkable similarity with handwritten digits. 
This could indicate overfitting, which is also supported by the relatively low RMSE of these nets (\cref{tab:rmse}).
On the other hand, the \texttt{SmallAE} network, having by far the smallest number of parameters, exhibits only one eigenvector 
and has larger RMSE.
This might indicate that this network is not very much bound to the training data and has few invariances.

\subsection{Approximate Eigenvectors as Stable and Unstable Modes of Deep Denoising Nets}
\label{Sec::approx_eig}

In this section, we analyze eigenproblems for deep denoising nets. Directly applying \cref{alg:simple} for these nets results in less meaningful results (cf.~\cref{Fig::simple_for_FFDnet}), due their unique nature, as detailed in \Cref{Sec::adapted}. Thus, we use the adapted eigenproblem \cref{Eq:new_eigenproblem} and \cref{alg:nonhom_power_it}. 

The resulting solutions can only be regraded as approximate eigenvectors, since they do not accurately solve the eigenproblem (see \cref{def::approx_eigen}, using degrees throughout the paper). Thus, we address these solutions as stable and unstable modes of the denoiser. Stable modes are optimal inputs for the denoiser, achieving superior PSNR in noise removal, and corresponding to large eigenvalues. We also show a method for generating unstable modes, which are strongly suppressed by the denoiser and correspond to small eigenvalues. 

\begin{wrapfigure}{r}{0.6\textwidth}
    \centering
    \begin{subfigure}{0.19\textwidth}
    \centering
    \includegraphics[height=1.8cm]{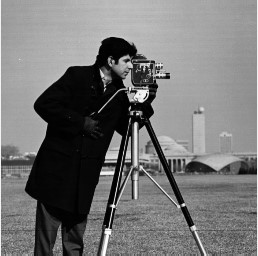}
    \caption{Initial\\ image}
\end{subfigure}
\begin{subfigure}{0.19\textwidth}
\centering
    {\includegraphics[height=1.8cm,trim=0pt 10pt 0pt 10pt,clip]{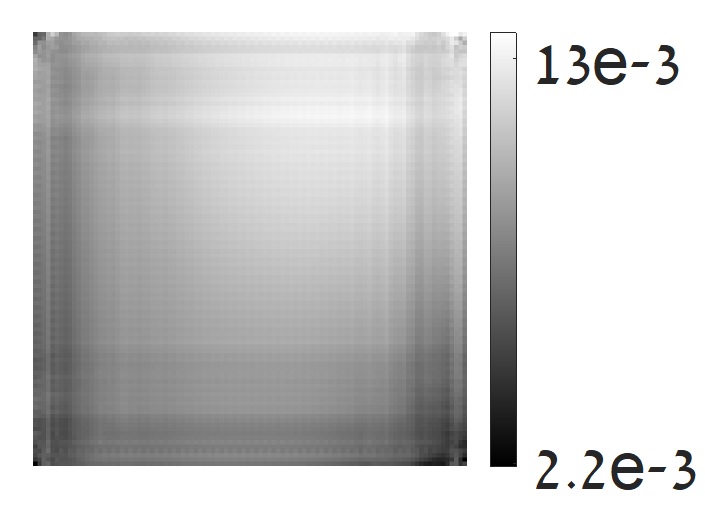}}
    \caption{Stable mode with unit norm}
\end{subfigure}
\begin{subfigure}{0.19\textwidth}
\centering
    {\includegraphics[height=1.8cm]{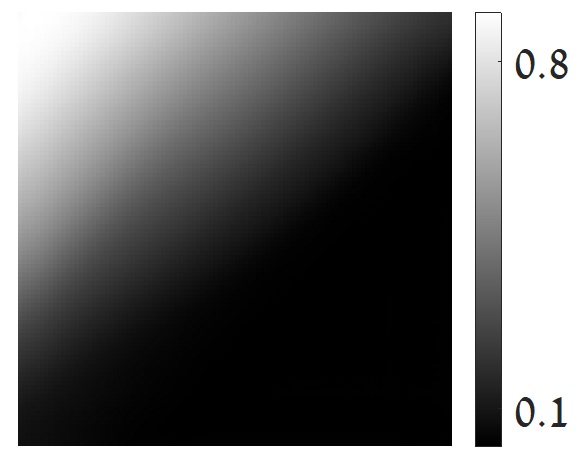}}
    \caption{Stable mode with original norm}
\end{subfigure}
\vspace{-15pt}
\caption{FFDnet's eigenvectors using the simple power method \cref{alg:simple} (initial condition \texttt{cameraman}). $\langle T(u),1 \rangle = \langle u,1 \rangle$ holds only approximately. (b) normalizing to unit norm: $\lambda=1.123$, $\theta_{2000}=0^\circ$. (c) normalizing initial norm: $\lambda=0.9999$, $\theta_{2000}=0.56^\circ$.}
\label{Fig::simple_for_FFDnet}
\vspace{-25pt}
\end{wrapfigure}

\cref{Fig::better_removal} shows a known behavior of stable and unstable modes: the denoiser better removes noise from its stable modes, than from natural images. Much poorer noise removal is achieved when denoising its unstable modes, which are approximately as unstable as noise itself.
We show such stable and unstable modes for the FFDnet~\cite{zhang2018ffdnet} (\Cref{Sec::FFDnet}) and DnCNN~\cite{zhang2017beyond} (\Cref{Sec::DnCNN}) deep denoising nets. Throughout our experiments, we validate these are stable modes following \cref{def::approx_eigen} and \cref{prop:Ray} and~\cref{prop:theta} (see~\cite{HAITFRAENKEL2021103041}). First, the Rayleigh quotient $R^\dagger(u)$ stabilizes to a value $\lambda$. Second, the angle $\theta$ between $u^k$ and $T(u^k)$ decreases to approximately zero. Also, the eigenproblem~\cref{Eq:new_eigenproblem} approximately holds for each pixel. 

\begin{figure*}[h]
\captionsetup[subfigure]{justification=centering}
\centering
\begin{subfigure}{0.17\textwidth}
\centering
    \includegraphics[width=1.8cm]{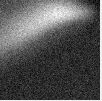}
    \caption{\\Noisy stable mode}
\end{subfigure}
\begin{subfigure}{0.15\textwidth}
\centering
    \includegraphics[width=1.8cm]{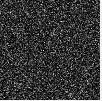}
    \caption{Noise removed from (a)}
\end{subfigure}
\begin{subfigure}{0.15\textwidth}
\centering
    \includegraphics[width=1.8cm]{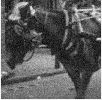}
    \caption{\\Noisy natural image}
\end{subfigure}
\begin{subfigure}{0.15\textwidth}
\centering
    \includegraphics[width=1.8cm]{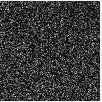}
    \caption{Noise removed from (c)}
\end{subfigure}
\begin{subfigure}{0.17\textwidth}
\centering
    \includegraphics[width=1.8cm]{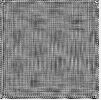}
    \caption{\\Noisy unstable mode}
\end{subfigure}
\begin{subfigure}{0.15\textwidth}
\centering
    \includegraphics[width=1.8cm]{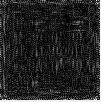}
    \caption{Noise removed from (e)}
\end{subfigure}
\\
\begin{subfigure}{\textwidth}
\centering
    \begin{tikzpicture}
    \begin{axis}[
        title={FFDnet Denoising - PSNR Gain},
        ybar,
        width=0.9\textwidth,
        height=6cm,
        bar width = 20pt,
        enlargelimits=0.2,
        legend cell align=left,
        legend pos =  north west,
        ylabel={PSNR Gain (dB)},
        ytick={-10,0,10,20},
        symbolic x coords={1,2,3,4,5,6,7,8,9,10,11,12},
        xmajorticks=false
        ]
        \addplot[fill=violet] coordinates {(1,-11.78)};
        \addplot[fill=red] coordinates {(2,-11.8544) (3,-11.7956) (4,-11.7759)};
        \addplot[fill=green] coordinates {(5,3.8982) (6,5.1825) (7,7.7979)};
        \addplot[fill=lime] coordinates {(8,12.3467) (9,14.8279) (10,19.1838) (11,19.3078) (12,20.8687)};
        \legend{noise,unstable modes,natural images,stable modes}
    \end{axis}
    \end{tikzpicture}
    \caption{PSNR gain when denoising different noisy images: stable modes vs. natural images vs. unstable modes vs. noise images. The noise variance of the noise is $1/15$ of the image variance.}
\end{subfigure}
\vspace{-15pt}
\caption{Illustration of a known property for stable and unstable modes. FFDnet achieves best denoising results for its stable modes (a-b), medium results for natural images (c-d), and worst results for unstable modes (e-f). The noise removed from the stable mode is also more uniform (note that (f) contains some structure of (e)). (g): PSNR gain when denoising using FFDnet for different noisy images (noise image result averaged over 50 images).}
\label{Fig::better_removal}
\vspace{-25pt}
\end{figure*}

\subsubsection{FFDnet Deep Denoising Net}\label{Sec::FFDnet}
We first present results of \cref{alg:nonhom_power_it} for the FFDnet deep denoising~\cite{zhang2018ffdnet}. 

\paragraph{Nature of Operator} We demonstrate the smoothing nature of FFDnet by iterative denoising (no power method) using 500 iterations in \cref{Fig::FFDnet_nature}. 

\paragraph{Stable Modes} \cref{Fig::FFDnet} shows the evolution of the power method to the final stable modes, using FFDnet on three different initial images. Eigenvalues are larger than but very close to 1, which is unusual for a denoiser (see Cor.~1 in~\cite{HAITFRAENKEL2021103041}). The stable modes differ from each other, each depending on its initial condition. Modes were validated following the framework detailed in \Cref{Sec::approx_eig}. 

\begin{figure*}
\captionsetup[subfigure]{justification=centering}
\centering
\begin{subfigure}{0.155\textwidth}
\centering
    \includegraphics[width=0.9\textwidth]{cameraman.jpg}
    \caption{Initial image}
\end{subfigure}
\begin{subfigure}{0.155\textwidth}
\centering
    \includegraphics[width=0.9\textwidth]{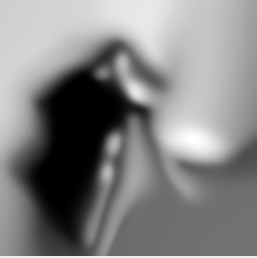}
   \caption{100 iterations}
\end{subfigure}
\begin{subfigure}{0.155\textwidth}
\centering
    \includegraphics[width=0.9\textwidth]{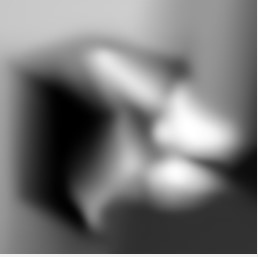}
    \caption{300 iterations}
\end{subfigure}
\begin{subfigure}{0.155\textwidth}
\centering
    \includegraphics[width=0.9\textwidth]{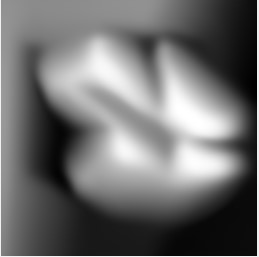}
    \caption{600 iterations}
\end{subfigure}
\begin{subfigure}{0.155\textwidth}
\centering
    \includegraphics[width=0.9\textwidth]{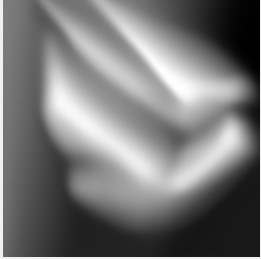}
    \caption{1000 iterations}
\end{subfigure}
\begin{subfigure}{0.155\textwidth}
\centering
    \includegraphics[width=0.9\textwidth]{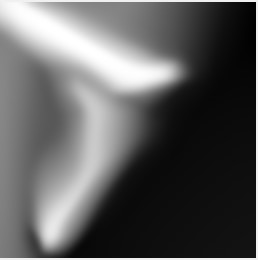}
    \caption{\\Stable mode}
\end{subfigure}
\\
\begin{subfigure}{0.24\textwidth}
\centering
    \includegraphics[height=2cm]{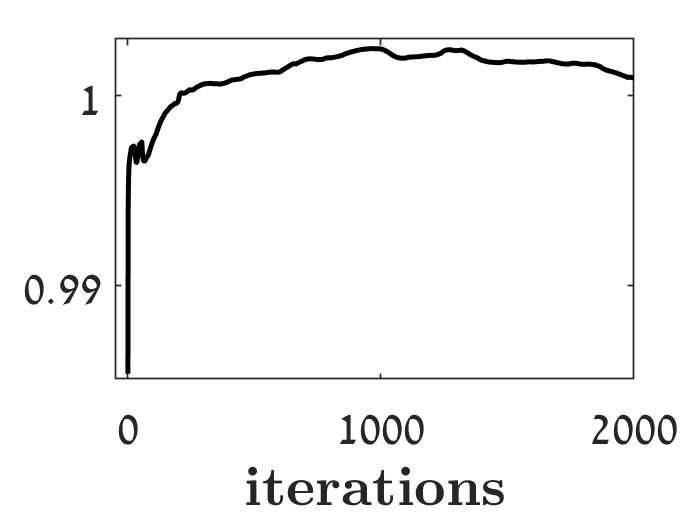}
    \caption{Rayleigh quotient $R^\dagger(u)$ $\rightarrow \lambda$}
\end{subfigure}
\begin{subfigure}{0.45\textwidth}
\centering
    \includegraphics[height=2cm]{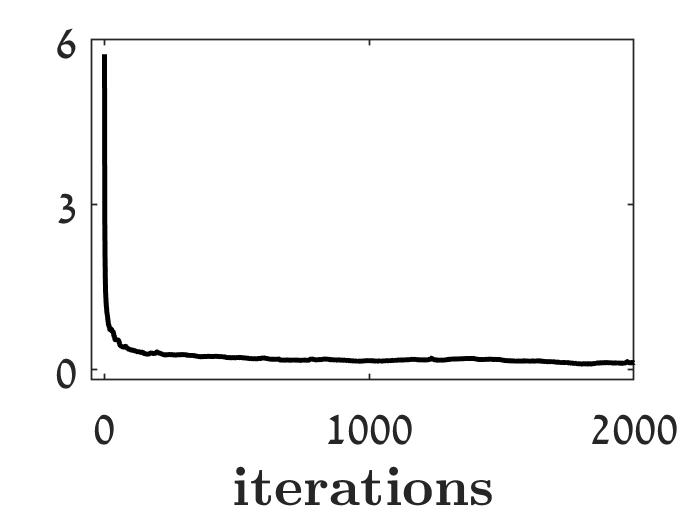}
    \caption{$\theta$ between $u^k, T(u^k) \rightarrow 0$,\\$\theta_{2000}=0.11^\circ$}
\end{subfigure}
\begin{subfigure}{0.24\textwidth}
\centering
    \includegraphics[height=2cm]{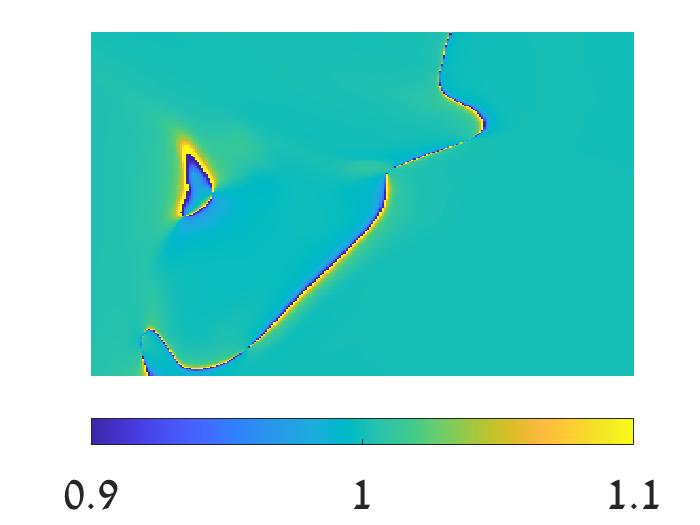}
    \caption{$\lambda$ per pixel: \cref{Eq:new_eigenproblem} holds approximately}
\end{subfigure}
\\
\begin{subfigure}{0.155\textwidth}
\centering
    \includegraphics[width=0.9\textwidth]{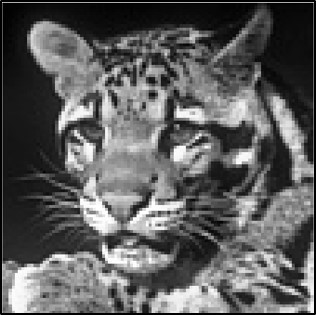}
    \caption{Initial\\ image}
\end{subfigure}
\begin{subfigure}{0.155\textwidth}
\centering
   \includegraphics[width=0.9\textwidth]{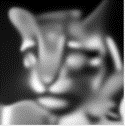}
  \caption{10 iterations}
\end{subfigure}
\begin{subfigure}{0.155\textwidth}
\centering
   \includegraphics[width=0.9\textwidth]{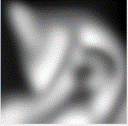}
  \caption{50\\ iterations}
\end{subfigure}
\begin{subfigure}{0.155\textwidth}
\centering
   \includegraphics[width=0.9\textwidth]{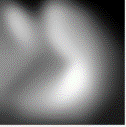}
    \caption{500 iterations}
\end{subfigure}
\begin{subfigure}{0.155\textwidth}
\centering
   \includegraphics[width=0.9\textwidth]{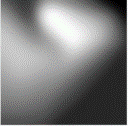}
   \caption{1300 iterations}
\end{subfigure}
\begin{subfigure}{0.155\textwidth}
\centering
    \includegraphics[width=0.9\textwidth]{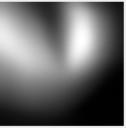}
    \caption{\\Stable mode}
\end{subfigure}
\begin{subfigure}{0.18\textwidth}
\centering
    \includegraphics[width=0.7\textwidth]{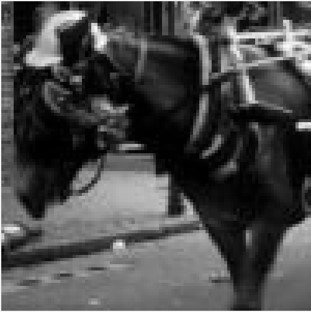}
    \caption{Initial\\ image}
\end{subfigure}
\begin{subfigure}{0.155\textwidth}
\centering
    \includegraphics[width=0.9\textwidth]{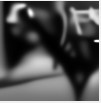}
    \caption{10 iterations}
\end{subfigure}
\begin{subfigure}{0.155\textwidth}
\centering
    \includegraphics[width=0.9\textwidth]{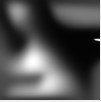}
    \caption{100 iterations}
\end{subfigure}
\begin{subfigure}{0.155\textwidth}
\centering
    \includegraphics[width=0.9\textwidth]{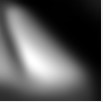}
    \caption{350 iterations}
\end{subfigure}
\begin{subfigure}{0.155\textwidth}
\centering
    \includegraphics[width=0.9\textwidth]{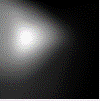}
    \caption{700 iterations}
\end{subfigure}
\begin{subfigure}{0.155\textwidth}
\centering
    \includegraphics[width=0.9\textwidth]{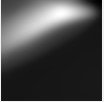}
    \caption{\\Stable mode}
\end{subfigure}
\vspace{-15pt}
\caption{FFDnet power method evolution to final stable modes (2000 iterations), validated following \Cref{Sec::approx_eig}.
Rows~$1$-$2$: initial condition \texttt{cameraman}, with $\lambda=1.0009$. 
Row~$3$: initial condition \texttt{tiger}, with $\lambda=1.0036$, $\theta_{2000}=0.07^\circ$. 
Row~$4$: initial condition \texttt{horse}, with $\lambda=1.0039$, $\theta_{2000}=0^\circ$.
}
\label{Fig::FFDnet}
\vspace{-25pt}
\end{figure*}

\paragraph{Further Stable Modes} \cref{Fig::FFDnet_horse_2nd_3rd} shows further stable modes using FFDnet with initial condition \texttt{horse}. We here follow the concept introduced in Algorithm~4 in~\cite{HAITFRAENKEL2021103041}. Given previously attained stable modes, we first perform a small number of power iterations with projections onto the space orthogonal to these modes, to ensure sufficient deviation and avoid recurrence of stable modes. We then perform the regular nonlinear power method to reach as state which meets~\cref{Eq:new_eigenproblem}.
Eigenvalues are now smaller than but very close to 1, as expected from a denoiser (see Cor.~1 in~\cite{HAITFRAENKEL2021103041}). In this example, the eigenvalues meet $\lambda_1>\lambda_2>\lambda_3$, but this cannot be guaranteed in general.

\begin{figure*}
\captionsetup[subfigure]{justification=centering}
\centering
\begin{subfigure}{0.155\textwidth}
\centering
    \includegraphics[height=2cm]{horse.jpg}
    \caption{Initial image}
\end{subfigure}
\begin{subfigure}{0.155\textwidth}
\centering
    \includegraphics[height=2cm]{FFDnet_horse_2000it.jpg}
    \caption{Stable mode $v_1$}
\end{subfigure}
\begin{subfigure}{0.155\textwidth}
\centering
    \includegraphics[height=2cm]{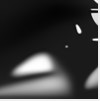}
    \caption{New initial condition for $v_2$}
\end{subfigure}
\begin{subfigure}{0.155\textwidth}
\centering
    \includegraphics[height=2cm]{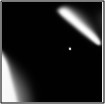}
    \caption{Stable mode $v_2$}
\end{subfigure}
\begin{subfigure}{0.155\textwidth}
\centering
    \includegraphics[height=2cm]{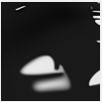}
    \caption{New initial condition for $v_3$}
\end{subfigure}
\begin{subfigure}{0.155\textwidth}
\centering
    \includegraphics[height=2cm]{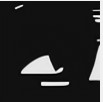}
    \caption{Stable mode $v_3$}
\end{subfigure}
\vspace{-15pt}
\caption{FFDnet further stable modes (initial condition \texttt{horse}), validated following \Cref{Sec::approx_eig}. For the stable modes $v_i$, fifty projection iterations enforce orthogonality to modes $v_{j}$ for $j<i$, generating a new initial condition. 2000 iterations of the power method then follow. 
(d): $v_2$, with $\lambda_2=0.9934$, $\theta_{2000}=0.26^\circ$. 
(f): $v_3$, with $\lambda_3=0.9815$, $\theta_{2000}=0.09^\circ$.  
}
\label{Fig::FFDnet_horse_2nd_3rd}
\end{figure*}

\paragraph{Unstable Modes} We follow Section~2.6 in~\cite{HAITFRAENKEL2021103041} to reveal unstable modes of the denoiser. We construct the complementary texture generator operator, $T^\dagger(u) := u-T(u)$.  \cref{Fig::FFDnet_smallest} shows stable modes of $T^\dagger$, with eigenvalues very close to but larger that 1. 
Modes were validated following \Cref{Sec::approx_eig}.
Note that stable modes of $T^\dagger$ are unstable modes of the denoising net $T$, characterized by fine textures. 

\begin{figure*}
\captionsetup[subfigure]{justification=centering}
\centering
\begin{subfigure}{0.2\textwidth}
\centering
    \includegraphics[height=2.2cm]{horse.jpg}
    \caption{\\Initial image}
\end{subfigure}
\begin{subfigure}{0.28\textwidth}
\centering
    \includegraphics[height=2.2cm]{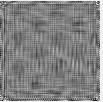}
    \caption{Unstable mode of denoiser}
\end{subfigure}
\begin{subfigure}{0.2\textwidth}
\centering
    \includegraphics[height=2.2cm]{tiger.jpg}
    \caption{\\Initial image}
\end{subfigure}
\begin{subfigure}{0.28\textwidth}
\centering
    \includegraphics[height=2.2cm]{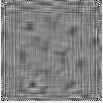}
    \caption{Unstable mode of denoiser}
\end{subfigure}
\vspace{-15pt}
\caption{Stable modes of the FFDnet texture generator using 10000 iterations, validated following \Cref{Sec::approx_eig}. These are \emph{unstable} modes of the FFDnet denoiser. 
(a)-(b): initial condition \texttt{horse}, with $\lambda=1.006$, $\theta_{10000}=0.23^\circ$.
(c)-(d): initial condition \texttt{tiger}, with $\lambda=1.0033$, $\theta_{10000}=0.4829^\circ$.
}
\label{Fig::FFDnet_smallest}
\vspace{-25pt}
\end{figure*}
\begin{figure*}
\captionsetup[subfigure]{justification=centering}
\begin{subfigure}{0.155\textwidth}
    \centering
    \includegraphics[height=2cm]{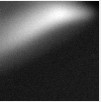}
    \caption{Stable mode with Gaussian noise}
\end{subfigure}
\begin{subfigure}{0.155\textwidth}
    \centering
    \includegraphics[height=2cm]{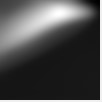}
     \caption{FFDnet Denoising of (a)}
\end{subfigure}
\begin{subfigure}{0.155\textwidth}
    \centering
    \includegraphics[height=2cm]{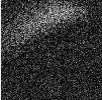}
    \caption{Noise removed from (a)}
\end{subfigure}
\begin{subfigure}{0.155\textwidth}
    \centering
    \includegraphics[height=2cm]{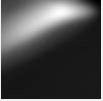}
    \caption{JPEG compression of stable mode}
\end{subfigure}
\begin{subfigure}{0.155\textwidth}
    \centering
    \includegraphics[height=2cm]{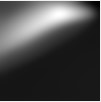}
     \caption{FFDnet Denoising of (d)}
\end{subfigure}
\begin{subfigure}{0.155\textwidth}
    \centering
    \includegraphics[height=2cm]{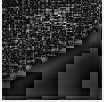}
    \caption{Artifacts removed from (d)}
\end{subfigure}
\\
\begin{subfigure}{0.24\textwidth}
\centering
\includegraphics[height=2cm]{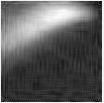}
\caption{Stable mode + unstable mode}
\end{subfigure}
\begin{subfigure}{0.2\textwidth}
\centering
\includegraphics[height=2cm]{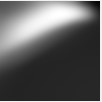}
\caption{Denoising \\of (g)}
\end{subfigure}
\begin{subfigure}{0.24\textwidth}
\centering
\includegraphics[height=2cm]{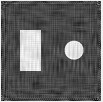}
\caption{Unstable mode + structure}
\end{subfigure}
\begin{subfigure}{0.28\textwidth}
\centering
\includegraphics[height=2cm]{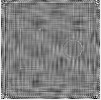}
\caption{Texture generator applied to (i)}
\end{subfigure}
\vspace{-15pt}
\caption{Robustness of FFDnet's stable mode to small degradations. (a)-(c): noise degradation with variance $0.01^2$ corrected by 10 power iterations. (d)-(f): JPEG compression degradation, corrected by 10 power iterations. (g)-(h): The unstable mode is considered as ``noise'' by the denoiser . (i)-(j): A structure is considered as ``noise'' by the texture generator.\label{Fig::degradation}}
\vspace{-20pt}
\end{figure*}
\paragraph{Robustness to Small Degradations} \cref{Fig::degradation} illustrates the robustness of stable modes to small degradations. A degraded stable mode has a similar Rayleigh quotient to that of the original stable mode. When applying the power method, it evolves back to the critical point in its neighborhood, given by the original stable mode. We also show how noise robustness holds in a very wide sense. The denoiser considers textures and fine details, such as the small eigenvector, as noise to be removed. On the other hand, the texture generator prefers noise and textures and it considers coarse structures as noise to be removed.

\subsubsection{DnCNN Deep Denoising Net}\label{Sec::DnCNN}
We present results of \cref{alg:nonhom_power_it} for the DnCNN deep denoising net~\cite{zhang2017beyond}.
\begin{figure*}[h!]
\captionsetup[subfigure]{justification=centering}
\centering
\begin{subfigure}{0.2\textwidth}
\centering
    \includegraphics[height=2.3cm]{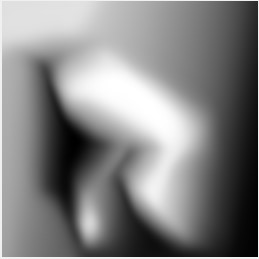}
    \caption{FFDnet's smoothing nature: iterative denoising}
    \label{Fig::FFDnet_nature}
\end{subfigure}
\begin{subfigure}{0.23\textwidth}
\centering
    \includegraphics[height=2.3cm]{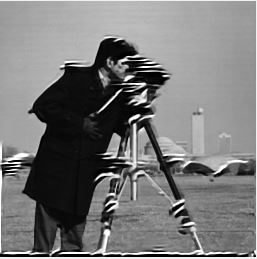}
    \caption{DnCNN's stripe sharpening effect: iterative denoising}
    \label{Fig::stripes_nature}
\end{subfigure}
\begin{subfigure}{0.16\textwidth}
\centering
    \includegraphics[height=2.3cm]{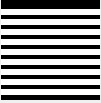}
    \caption{\\Horizontal stripes}
    \label{Fig::hor_str}
\end{subfigure}
\begin{subfigure}{0.16\textwidth}
\centering
    \includegraphics[height=2.3cm]{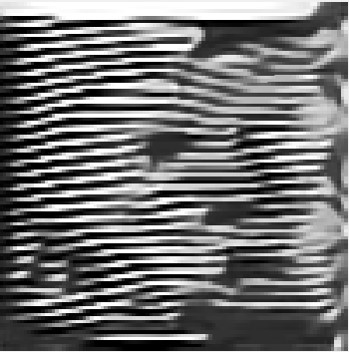}
    \caption{DnCNN's stable mode of (c)}
\end{subfigure}
\begin{subfigure}{0.18\textwidth}
\centering
    \includegraphics[height=2.3cm]{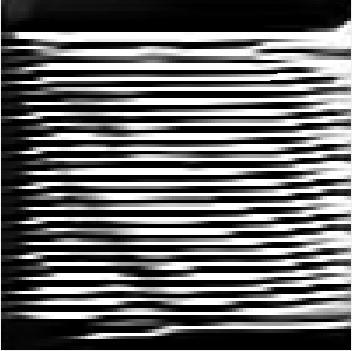}
    \caption{DnCNN's iterative denoising of (c)}
\end{subfigure}
\begin{subfigure}{0.15\textwidth}
\centering
    \includegraphics[height=2.3cm]{hor_strip.jpg}
    \caption{\\Horizontal stripes}
\end{subfigure}
\begin{subfigure}{0.16\textwidth}
\centering
    \includegraphics[height=2.3cm]{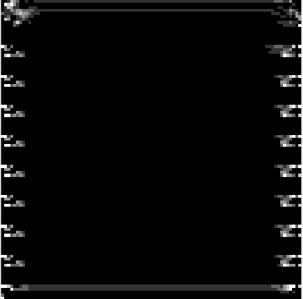}
    \caption{Difference when denoising (f), MSE=2.2}
\end{subfigure}
\begin{subfigure}{0.15\textwidth}
\centering
    \includegraphics[height=2.3cm]{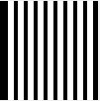}
    \caption{\\Vertical\\stripes}
\end{subfigure}
\begin{subfigure}{0.16\textwidth}
\centering
    \includegraphics[height=2.3cm]{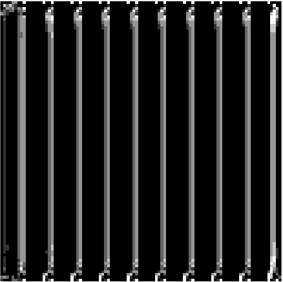}
    \caption{Difference when denoising (h), MSE=3.8}
\end{subfigure}
\begin{subfigure}{0.15\textwidth}
\centering
    \includegraphics[height=2.3cm]{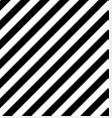}
    \caption{\\Diagonal stripes}
\end{subfigure}
\begin{subfigure}{0.16\textwidth}
\centering
    \includegraphics[height=2.3cm]{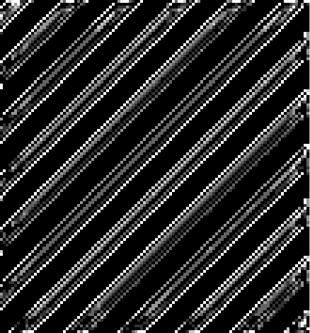}
    \caption{Difference when denoising (j), MSE=13.5}
\end{subfigure}
\begin{subfigure}{0.15\textwidth}
\centering
    \includegraphics[height=2.3cm]{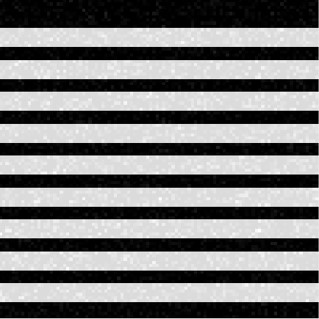}
    \caption{Noisy horizontal stripes}
\end{subfigure}
\begin{subfigure}{0.16\textwidth}
\centering
    \includegraphics[height=2.3cm]{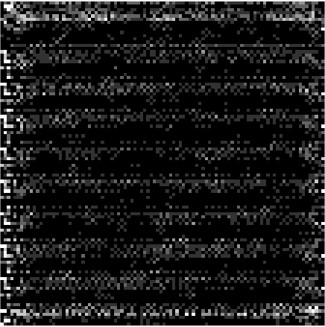}
    \caption{Difference when denoising (l), MSE=11.0}
\end{subfigure}
\begin{subfigure}{0.15\textwidth}
\centering
    \includegraphics[height=2.3cm]{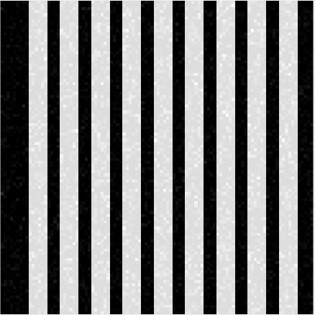}
    \caption{\\Noisy vertical stripes}
\end{subfigure}
\begin{subfigure}{0.16\textwidth}
\centering
    \includegraphics[height=2.3cm]{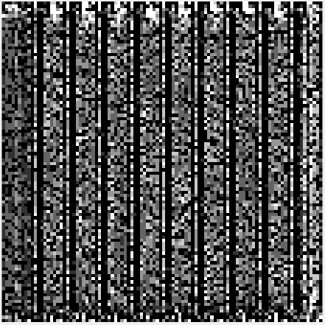}
    \caption{Difference when denoising (n), MSE=13.2}
\end{subfigure}
\begin{subfigure}{0.15\textwidth}
\centering
    \includegraphics[height=2.3cm]{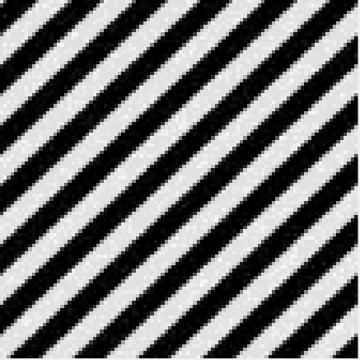}
    \caption{\\Noisy diagonal stripes}
\end{subfigure}
\begin{subfigure}{0.16\textwidth}
\centering
    \includegraphics[height=2.3cm]{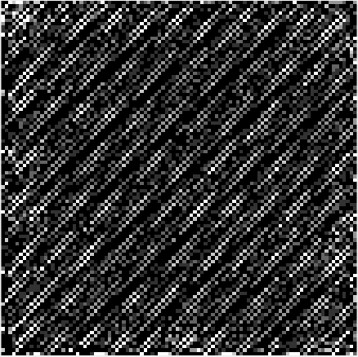}
    \caption{Difference when denoising (p), MSE=13.9}
\end{subfigure}
\caption{Nature of FFDnet and DnCNN. (a-b) 500 iterations of iterative denoising (no power method) of \texttt{cameraman}: FFDnet's smoothing nature (a) vs. DnCNN's stripe sharpening effect (b). (c)-(e): DnCNN "prefers" stripes: iterative denoising. \revise{(f)-(k): single denoising of clean stripe images of different orientations. Denoising almost does not affect horizontal stripes compared to others, since DnCNN "prefers" them (small MSE between input and output). (l-q): single denoising of stripe images of different orientations, with small noise. Better denoising is achieved when denoising horizontal stripes compared to others (small MSE between clean and denoised images). All difference images are enhanced with a multiplicative factor of~$50$.}}
\label{Fig::DnCNN_analysis}
\end{figure*}

\paragraph{Nature of Operator} Although it is a smoothing operator, DnCNN also produces a sharpening effect, adding stripes to the image. We first illustrate in \cref{Fig::stripes_nature} the horizontal stripes produced by iterative DnCNN denoising of the cameraman image, using 500 iterations. We also show the stable mode after 5000 iterations of the power method and iterative denoising result after 500 iterations for an synthetic stripe image. \revise{The second row of \cref{Fig::DnCNN_analysis} shows that even after a single application of DnCNN to horizontal stripes, in- and output are already almost collinear, with a small difference between them. Comparing this to stripe images of other orientations demonstrates this net's tendency to horizontal stripes. The third row of Fig. \ref{Fig::DnCNN_analysis} shows a single application of DnCNN to stripe images of different orientations, with small noise.
Also here horizontal stripes are better denoised compared to other orientations.}
DnCNN is a \emph{blind} denoiser that deals with unknown noise level, but assumes that noise is present in the input. Hence we believe the sharpening effect is caused by applying DnCNN iteratively throughout the power iterations to \emph{noiseless} and smooth images, for which it was not trained. 
\paragraph{Stable Modes} \cref{Fig::DnCNN} shows the power method evolution to the final stable mode, using DnCNN on two different initial images. As for FFDnet, eigenvalues are larger than but very close to 1.
The stable modes mainly are smoothed versions of the initial conditions with additional horizontal stripes. 

\begin{figure*}
\captionsetup[subfigure]{justification=centering}
\centering
\begin{subfigure}{0.155\textwidth}
\centering
    \includegraphics[width=0.9\textwidth]{cameraman.jpg}
    \caption{Initial image}
\end{subfigure}
\begin{subfigure}{0.155\textwidth}
\centering
    \includegraphics[width=0.9\textwidth]{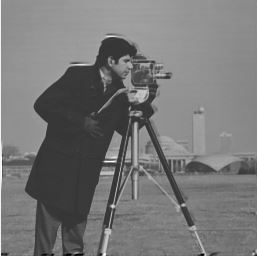}
   \caption{100 iterations}
\end{subfigure}
\begin{subfigure}{0.155\textwidth}
\centering
    \includegraphics[width=0.9\textwidth]{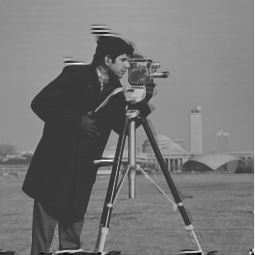}
    \caption{500 iterations}
\end{subfigure}
\begin{subfigure}{0.155\textwidth}
\centering
    \includegraphics[width=0.9\textwidth]{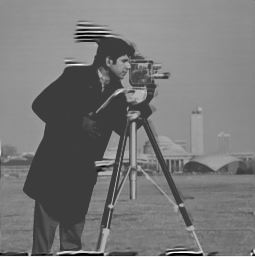}
    \caption{1000 iterations}
\end{subfigure}
\begin{subfigure}{0.155\textwidth}
\centering
    \includegraphics[width=0.9\textwidth]{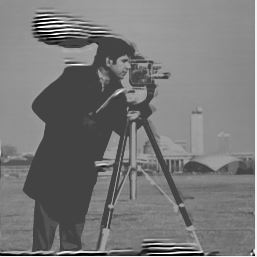}
    \caption{2000 iterations}
\end{subfigure}
\begin{subfigure}{0.155\textwidth}
\centering
    \includegraphics[width=0.9\textwidth]{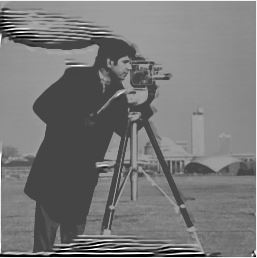}
    \caption{\\Stable mode}
\end{subfigure}
\\
\begin{subfigure}{0.18\textwidth}
\centering
    \includegraphics[width=0.7\textwidth]{horse.jpg}
    \caption{Initial\\ image}
\end{subfigure}
\begin{subfigure}{0.155\textwidth}
\centering
    \includegraphics[width=0.9\textwidth]{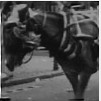}
    \caption{100 iterations}
\end{subfigure}
\begin{subfigure}{0.155\textwidth}
\centering
    \includegraphics[width=0.9\textwidth]{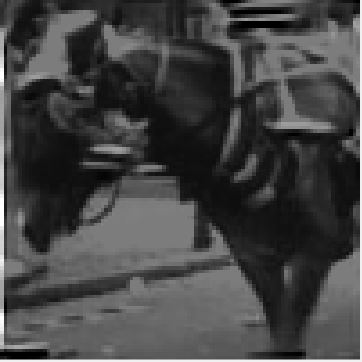}
    \caption{250 iterations}
\end{subfigure}
\begin{subfigure}{0.155\textwidth}
\centering
    \includegraphics[width=0.9\textwidth]{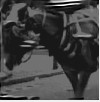}
    \caption{500 iterations}
\end{subfigure}
\begin{subfigure}{0.155\textwidth}
\centering
    \includegraphics[width=0.9\textwidth]{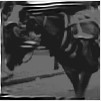}
    \caption{1000 iterations}
\end{subfigure}
\begin{subfigure}{0.155\textwidth}
\centering
    \includegraphics[width=0.9\textwidth]{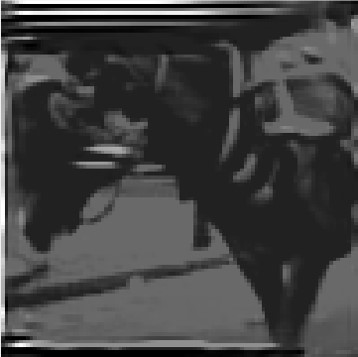}
    \caption{\\Stable mode}
\end{subfigure}
\vspace{-15pt}
\caption{DnCNN power method evolution to final stable modes after 5000 iterations, 
validated following \Cref{Sec::approx_eig}.
Row~$1$: initial condition \texttt{cameraman}, with $\lambda=1.0024$, $\theta_{2000}=0.13^\circ$. 
Row~$2$: initial condition \texttt{horse}, with $\lambda=1.0054$, $\theta_{2000}=0.24^\circ$. 
}
\label{Fig::DnCNN}
\vspace{-25pt}
\end{figure*}

\section{Conclusion and Outlook}
In this work we propose and analyze power methods to compute eigenvectors of proximal operators and neural networks. 
For proximal operators of one-homogeneous functionals we prove that a straightforward generalization of the linear power method converges to eigenvectors. 
Furthermore, our numerical experiments show the applicability of proximal eigenvectors, for instance in graph clustering and the computation of distance functions.
However, general denoising neural networks require a more general algorithm, which takes their natural domain into account. 
Despite the lack of theoretical convergence guarantees, this algorithm yields satisfactory approximate eigenvectors of the networks, which we interpret as (un)stable modes.
Those modes can be used to infer the kind of structures that the networks leave untouched or the type of noise they remove best, respectively.

Future work will include generalizations to proximal operators on Banach spaces, and the investigation of network architectures for which one can provably compute eigenvectors. 
Closely related to this is the design of 1-Lipschitz (non-expansive) networks, which are natural candidates for studying eigenvectors and fixed points.
A possible future application of (un)stable modes or eigenvectors of a network might consist in designing indicators for the amount of over-fitting, or for the generalization ability, respectively. 

\revision{%
Another challenging extension of our work deals with neural networks mapping between two different spaces, e.g., classification networks or networks for solving inverse problems.
In this case, one can study the doubly nonlinear eigenvalue problem $\lambda S(u) = T(u)$, where $S,T:U\to V$ are nonlinear operators. 
When $T$ is a neural network, it is not straightforward to choose $S$ and obtain a meaningful eigenvalue problem. 
One option consists in choosing $S:U\to V$ as a model-based operator which solves the same task as the neural network, for instance, a variational regularization method for solving an inverse problem. 
In this case, eigenvectors with large eigenvalues correspond to inputs on which the neural networks acts similar as the model-based approach.
Another example for $S$ could be the linearization of the network in the point $u$, leading to the problem $\lambda\langle \nabla T(u),u\rangle = T(u)$.
In any case, designing an appropriate power method for these problems is hard due to the lack of an operator mapping from $V$ to $U$. 
Here, one probably has to rely on linearization and inversion of the Jacobian matrix of the network.

Finally, we would like to determine conditions on neural networks such that their power method can be rigorously analyzed, similar to the proximal power method.
A first step in this direction might be~\cite{moeller2019controlling} where neural networks were trained to provably output a descent direction of some external energy. 
In light of the last paragraph in \Cref{Sec::def}, this would not be a suitable class of networks for a power method. However, using similar techniques as in \cite{moeller2019controlling} it might be possible to ``invert'' the network and to train networks which provably imply an energy descent of the power method as in \cref{cor:decrease_J}.
}

\section*{Acknowledgments}
This work was supported by the European Union’s Horizon 2020 research and innovation programme under the Marie Skłodowska-Curie grant agreement No 777826.
GG acknowledges support by the Israel Science Foundation (Grant No. 534/19) and by the Ollendorff Minerva Center. 
\bibliographystyle{siamplain}
\bibliography{bibliography}
\end{document}